\documentclass[a4paper]{article}

\usepackage[latin1]{inputenc}
\usepackage[T1]{fontenc}
\usepackage{amssymb,amsmath,amsthm,amscd,mathrsfs}
\usepackage[all]{xy}
\usepackage{color}
\usepackage{float}
\usepackage{array}
\usepackage[margin=3cm]{geometry}

\newtheorem{thm}{Theorem}[section]
\newtheorem{defi}[thm]{Definition}
\newtheorem{prop}[thm]{Proposition}
\newtheorem{lemme}[thm]{Lemma}
\newtheorem{cor}[thm]{Corollary}

\theoremstyle{remark}

\newtheorem{rmk}[thm]{Remark}
\newtheorem{nota}[thm]{Notation}

\DeclareMathOperator{\Z}{\mathbb{Z}}
\DeclareMathOperator{\sing}{Sing}
\DeclareMathOperator{\codim}{Codim}
\DeclareMathOperator{\Hom}{Hom}

\DeclareMathOperator{\Pic}{Pic}

\DeclareMathOperator{\Fix}{Fix}

\DeclareMathOperator{\Vect}{Vect}
\DeclareMathOperator{\tors}{tors}

\DeclareMathOperator{\Ker}{Ker}

\DeclareMathOperator{\Sing}{Sing}

\DeclareMathOperator{\Q}{\mathbb{Q}}

\DeclareMathOperator{\C}{\mathbb{C}}
\DeclareMathOperator{\R}{\mathbb{R}}
\DeclareMathOperator{\dif}{d}

\DeclareMathOperator{\Ree}{Re}
\DeclareMathOperator{\Ima}{Im}

\DeclareMathOperator{\DR}{R}
\DeclareMathOperator{\Def}{Def}
\DeclareMathOperator{\NS}{NS}
\DeclareMathOperator{\Sp}{Sp}
\DeclareMathOperator{\inv}{inv}

\newcommand{\eq}[1][r]
{\ar@<-3pt>@{-}[#1]
\ar@<-1pt>@{}[#1]|<{}="gauche"
\ar@<+0pt>@{}[#1]|-{}="milieu"
\ar@<+1pt>@{}[#1]|>{}="droite"
\ar@/^2pt/@{-}"gauche";"milieu"
\ar@/_2pt/@{-}"milieu";"droite"}

\makeatletter
\newcommand{\opnorm}{\@ifstar\@opnorms\@opnorm}
\newcommand{\@opnorms}[1]{%
  \left|\mkern-1.5mu\left|\mkern-1.5mu\left|
   #1
  \right|\mkern-1.5mu\right|\mkern-1.5mu\right|
}
\newcommand{\@opnorm}[2][]{%
  \mathopen{#1|\mkern-1.5mu#1|\mkern-1.5mu#1|}
  #2
  \mathclose{#1|\mkern-1.5mu#1|\mkern-1.5mu#1|}
}
\makeatother


\begin{document}
\title{\bf Global Torelli theorem for irreducible symplectic orbifolds}

\author{Grégoire \textsc{Menet}}


\maketitle

\let\thefootnote\relax\footnotetext{\textbf{Keywords:} hyperkähler orbifolds, global Torelli theorem, projectivity criterion, twistor space.}
\let\thefootnote\relax\footnotetext{\textbf{MSC:} 32J27, 32S45, 14J10, 32G13.}

\begin{abstract}
We propose a generalization of Verbitsky's global Torelli theorem in the framework of compact Kähler irreducible holomorphically symplectic orbifolds by adapting Huybrechts' proof \cite{Huybrechts6}. As intermediate step needed, we also provide a generalization of the twistor space and the projectivity criterion based on works of Campana \cite{Campana} and Huybrechts \cite{Huybrechts2} respectively. 
\end{abstract}

\section{Introduction}
In the last several years, \emph{irreducible holomorphically symplectic} (IHS) manifolds became a very important topic of study. However, the first fundamental contribution to this topic, after Bogomolov's decomposition theorem, was established in a more general setting by Fujiki in \cite{Fujiki}. Fujiki considered irreducible holomorphically symplectic orbifolds. A complex analytic space is called an \emph{orbifold} if it is locally isomorphic to a quotient of an open subset of $\C^n$ by a finite automorphism group (Definition \ref{VV}). Let $X$ be an orbifold, $X$ is said \emph{irreducible holomorphically symplectic} if $W:=X\smallsetminus \Sing X$ is simply connected and admits a unique, up to a scalar multiple, nondegenerate holomorphic 2-form (Definition \ref{VS}). In this paper, we call these objects, for simplicity, irreducible symplectic orbifolds or (IHS) orbifolds.

Since this first contribution of Fujiki, the development of this field was concentrated on smooth (IHS) manifolds with fundamental results by Beauville \cite{Beauville} and Huybrechts \cite{Huybrechts5}. Nevertheless, coming back to the original idea of Fujiki is a very promising direction of research. On the one hand, the numerical limitation of examples disappears with (IHS) orbifolds. 
Markushevich, Tikhomirov and Matteini in \cite{Markou} and \cite{Matteini}, provided examples coming from moduli spaces of stable sheaves on some particular K3 surfaces; in addition, several examples coming from quotients were studied in \cite[Section 13]{Fujiki}, \cite{Lol}, \cite{Kapfer} and \cite{Lol2}. On the other hand, it is quite simple to generalize the theory of smooth (IHS) manifolds to the case of (IHS) orbifolds. Many tools were already generalized in the literature and (IHS) orbifolds seem to behave in many aspects as their smooth cousins. It is also an objective of this paper to back up this assertion. The most remarkable properties are the existence of a pure Hodge structure (Theorem \ref{Hodge}), the stability under deformation (Proposition \ref{triv}) and the generalization by Campana of Calabi--Yau's theorem (Theorem \ref{Ricci}). After the global Torelli theorem, many more generalizations could be expected, and this paper can be seen as a first contribution to motivate future research in this direction. The global Torelli theorem will also open many perspectives of research in the framework of (IHS) orbifolds, for instance to the study of mirror symmetry. This work also takes place in the more general project of generalization of theories coming from Kähler manifolds to Kähler complex spaces which will be more adapted to the minimal model program. As a similar initiative I would like to mention the work of Benjamin Bakker and Chistian Lehn which provided a global Torelli theorem for singular symplectic varieties admitting a symplectic resolution \cite{Lehn}. 

The second cohomology group $H^2(X,\Z)$ of an (IHS) orbifold $X$ can be endowed with its \emph{Beauville--Bogomolov form} $B_X$ (cf. Section \ref{TorelliSection} for the definition) of signature $(3,b_2(X)-3)$. Let $\Lambda$ be a lattice of signature $(3,b-3)$, with $b\geq3$, we call a \emph{marking} of an (IHS) orbifold $X$ an isometry $\varphi: H^2(X,\Z)\rightarrow \Lambda$ where $H^2(X,\Z)$ is endowed with the Beauville--Bogomolov form. We can construct the moduli space $\mathscr{M}_{\Lambda}$ of marked (IHS) orbifolds of the Beauville--Bogomolov lattice $\Lambda$ (see Section \ref{modu}). As in the smooth case, the global \emph{period map} is defined by:
\begin{align*}
\mathscr{P}:\ & \mathscr{M}_{\Lambda}\rightarrow \mathcal{D}=\mathbb{P}\left(\left\{\sigma\in \Lambda\otimes\mathbb{C} |\ \sigma^{2}=0,\ (\sigma+\overline{\sigma})^{2}>0\right\}\right)\\
& (X,\varphi)\mapsto \varphi(H^{2,0}(X)).
\end{align*}
Moreover, the moduli space $\mathscr{M}_{\Lambda}$ is not Hausdorff, however, we can construct its Hausdorff reduction $\overline{\mathscr{M}_{\Lambda}}$ (see Corollary \ref{Hausdorff}) through which the period map factorizes: 
$$\mathscr{P}: \mathscr{M}_{\Lambda}\twoheadrightarrow \overline{\mathscr{M}_{\Lambda}}\rightarrow \mathcal{D}.$$
Then the global Torelli theorem (Corollary \ref{torelliglobal}) can be expressed as follows:
\begin{thm}
Let $\Lambda$ be a lattice of signature $(3,b-3)$, with $b\geq3$. Assume that $\mathscr{M}_{\Lambda}\neq\emptyset$ and let $\mathscr{M}_{\Lambda}^{°}$ be a connected component of $\mathscr{M}_{\Lambda}$. Then the period map:
$$\mathscr{P}: \overline{\mathscr{M}_{\Lambda}}^{°}\rightarrow \mathcal{D}$$
is an isomorphism. 
\end{thm}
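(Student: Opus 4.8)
The plan is to adapt Verbitsky's global Torelli theorem following Huybrechts' exposition in \cite{Huybrechts6}, so the strategy factors the period map into two parts and proves each is an isomorphism separately. The composite $\mathscr{P}:\overline{\mathscr{M}_{\Lambda}}^{\circ}\rightarrow\mathcal{D}$ will be shown to be simultaneously surjective and injective, with the non-Hausdorff moduli space $\mathscr{M}_{\Lambda}^{\circ}$ collapsed to its Hausdorff reduction precisely to kill the ambiguity caused by birational (or here, bimeromorphic) models sharing the same period. I would organize the proof around three pillars, each of which must first be established in the orbifold setting.

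First I would prove surjectivity of $\mathscr{P}$ using the twistor family. Having generalized the twistor space construction (advertised in the abstract, following Campana \cite{Campana}), I would note that for any K\"ahler class $\alpha$ on an (IHS) orbifold $X$ the associated twistor line in $\mathcal{D}$ is the set of periods $\sigma$ orthogonal to the real span of $\alpha$ and the symplectic form; since these twistor lines sweep out $\mathcal{D}$ and the endpoints of a twistor family remain in $\mathscr{M}_{\Lambda}^{\circ}$, the image of $\mathscr{P}$ is open and closed, hence all of $\mathcal{D}$ by connectedness. The orbifold Calabi--Yau theorem (Theorem \ref{Ricci}) is what supplies the hyperk\"ahler metric underlying each twistor family, so this step rests on the analytic input already cited.

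Next I would attack injectivity, which is the heart of the theorem. The key is Verbitsky's observation, reformulated by Huybrechts, that two marked (IHS) objects with the same period are \emph{inseparable} points of the moduli space, i.e. they cannot be separated by disjoint open sets; thus they become equal after passing to the Hausdorff reduction $\overline{\mathscr{M}_{\Lambda}}$. To run this argument I would use the twistor lines again: given $(X,\varphi)$ and $(X',\varphi')$ with the same period, I would connect them through a chain of twistor lines (the monodromy/Teichm\"uller arguments showing the relevant Hausdorff quotient is connected and that generic periods have a Hausdorff fiber), and then show that away from a measure-zero set the twistor families force the two objects to coincide as points of $\overline{\mathscr{M}_{\Lambda}}^{\circ}$. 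Establishing that generic (non-projective, Picard rank zero) periods have exactly one preimage requires knowing that such orbifolds carry no nontrivial bimeromorphic self-maps fixing the period, which is where the projectivity criterion (generalizing Huybrechts \cite{Huybrechts2}) enters: it characterizes which periods correspond to projective orbifolds and isolates the generic locus on which separation is clean.

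The hard part will be the injectivity step, and specifically controlling the bimeromorphic ambiguity in the orbifold category. In the smooth case one invokes the deep fact that inseparable points correspond to bimeromorphic hyperk\"ahler manifolds, and that the Hausdorff reduction exactly quotients by this relation; transporting this to orbifolds demands that bimeromorphic (IHS) orbifolds induce the same Hodge-theoretic period and, conversely, that the Beauville--Bogomolov form and its period domain behave functorially under the relevant correspondences despite the singularities. I would therefore expect the main technical obstacle to be verifying that the twistor deformations stay within the orbifold class (no degeneration of the singularity type) and that Campana's metric construction yields families whose central fibers are again (IHS) orbifolds with the prescribed period; once these are in place, the formal structure of Huybrechts' proof --- surjectivity via twistor lines, injectivity via inseparability, isomorphism after Hausdorff reduction --- carries over essentially verbatim.
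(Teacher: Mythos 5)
Your proposal follows essentially the same route as the paper: surjectivity via twistor lines (Theorem \ref{Twistor} together with the K\"ahler cone description of Corollary \ref{K�hlercone}, itself resting on the projectivity criterion), and injectivity via the identification of inseparable points with bimeromorphic orbifolds (Proposition \ref{separated}) and the passage to the Hausdorff reduction, with the final step packaged in the paper as ``local homeomorphism which is a covering space of the simply connected $\mathcal{D}$, hence an isomorphism'' following \cite[Section 5.4]{Huybrechts6}. The ingredients and their roles match the paper's argument, so the plan is sound (modulo the small slip that the twistor line consists of periods \emph{contained in} $\mathbb{P}(W\otimes\C)$ for the positive three-space $W$ spanned by the K\"ahler class and the real and imaginary parts of the symplectic form, not of periods orthogonal to it).
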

To prove this theorem, several basic tools were also generalized such as the projectivity criterion:
\begin{thm}\label{PC}
Let $X$ be a primitively symplectic orbifold. There exists a line bundle $\mathcal{L}$ on $X$ such that $B_X(c_1(\mathcal{L}),c_1(\mathcal{L}))>0$ if and only if $X$ is projective.
\end{thm}
The notion of \emph{primitively symplectic orbifold} (introduced for the first time by Fujiki in \cite{Fujiki}) is a weaker version of the notion of irreducible symplectic orbifold where the simple connectedness of the smooth locus is omitted (Defintion \ref{VS}). The projectivity criterion is in particular needed to understand the Käher cone of a general (IHS) orbifolds (cf. \cite[Proposition 5.1]{Huybrechts5} and Corollary \ref{Kählercone}).
Another important ingredient is the generalization of the \emph{twistor space} (cf. Section \ref{twisty}).
Using the fact that orbifolds are locally quotients of smooth open sets by finite automorphism groups, we can generalize the definition of several objects of Riemannian geometry such as metrics or complex structures, etc. 
Using results of Campana \cite{Campana}, we can show that for an (IHS)-orbifold $X$ and a Kähler class $\alpha$ on $X$,
we can find a Ricci flat metric $g$ and three complex structures in quaternionic relations $I$, $J$ and $K$ on $X$ with $\alpha=[g(\cdot,I\cdot)]$.
Then as in the smooth case, the twistor space is the deformation parameterizing the complex structures $aI+bJ+cK$ with $(a,b,c)\in S^2$.



The paper is organized as follows. In Section \ref{RandC}, we provide some reminders and complements such as de Rham's theorems and Hodge decomposition for orbifolds. In particular, we generalize the Hodge--Riemann relation and the Lefschetz (1,1) theorem. Section \ref{Moduli} is dedicated to the construction of the moduli space $\mathscr{M}_{\Lambda}$ of Kähler (IHS) orbifolds. We propose a complete proof which is as simple as possible and as detailed as necessary of the Fujiki relation and local Torelli theorem (Theorem \ref{LocalTorelli}). The objective is to provide a survey of several ideas in the topic to start the theory on a solid and clear grounds. Also, adapting Huybrechts' proof \cite[Theorem 4.3]{Huybrechts5}, we describe the non-separated points of the moduli space in Proposition \ref{separated}. This proposition is applied to provide a Hausdorff reduction of $\mathscr{M}_{\Lambda}$ in Corollary \ref{Hausdorff}. Section \ref{MainSection} is dedicated to the proof of the projectivity criterion (Theorem \ref{PC}) which is an adaptation of another proof of Huybrechts (\cite{Huybrechts2}). Finally, Section \ref{Gloglo} adapts Huybrechts' proof of the global Torelli theorem (\cite{Huybrechts6}). In particular, we propose a genearalization of the twistor space in Section \ref{twisty}.

~\\

\textbf{Acknowledgements.} 
I am very grateful to Ulrike Rie\ss\ for very profitable discussions and many important comments. I want to thank Daniel Huybrechts for very useful discussions, Daniele Faenzi, Dimitri Markushevich, and Lucy Moser-Jauslin for helpful comments, Arvid Perego for his remark about examples of irreducible symplectic orbifolds, Frederic Campana for a very encouraging emails exchange and Dan Zaffran for his reference about Kähler orbifolds. I am also very grateful to my referee who provides a high quality report pointing out an important confusion in the previous version of this paper. This work was supported by the Marco Brunella grant of Burgundy University and the ERC-ALKAGE, grant No. 670846.
\section{Reminders and complements}\label{RandC}
\subsection{Definition of orbifolds}
\begin{defi}\label{VV}
A $n$-dimentional \emph{orbifold} is a connected paracompact complex space $X$ such that for every point $x\in X$, there exists an open neighborhood $U$ and a triple $(V,G,\pi)$ such that 
$V$ is an open set of $\C^n$, $G$ is a finite automorphism group of $V$ and $\pi:V\rightarrow U$ the composition of the quotient map $V\rightarrow V/G$ with an isomorphism $V/G\simeq U$. Such a triple $(V,G,\pi)$ is called a \emph{local uniformizing system} of $x$ or of $U$. 
\end{defi}
\begin{nota}\label{XUV*}
Let $X$ be an orbifold and $(V,G,\pi)$ a local uniformizing system of an open set $U$. We denote $X^*=X\smallsetminus \Sing X$, $U^*=U\smallsetminus \Sing U$ and $V^*=V\smallsetminus \pi^{-1}(\Sing U)$. We will also refer about $\pi$ as the quotient map. 
\end{nota}
\begin{rmk}\label{PrillR}
Let $x\in X$. Remark from \cite[Proposition 6]{Prill}, that we can always find a neighborhood $U$ of $x$ with a local uniformizing system
$(V,G,f)$ such that for all $g\in G$, $\codim \Fix g\geq 2$ and so $\pi:V^*\rightarrow U^*$ is an étale cover.
From now, local uniformizing systems will always have these properties. 
\end{rmk}
We recall that the quotient singularities are mild (see for instance \cite[Proposition 1.3]{Blache} and \cite[Proposition 5.15]{Kollar}).
\begin{prop}\label{MildSingu}
Let $X$ be an orbifold. Then $X$ is normal, $\Q$-factorial, Cohen-Macaulay, with only rational singularities.
\end{prop}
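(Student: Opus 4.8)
The plan is to reduce everything to a local invariant-theoretic statement, since all four properties are local on $X$. It suffices to establish them for a local model $U \simeq V/G$, where $V$ is an open subset of $\C^n$ and $G$ is a finite group acting on $V$; write $A = \mathcal{O}_V$ and $A^G = \mathcal{O}_U$ for the invariant subring. Working in characteristic zero is the key simplification: the Reynolds operator $\rho(a) = \frac{1}{|G|}\sum_{g \in G} g \cdot a$ is an $A^G$-linear projection $A \twoheadrightarrow A^G$, so that $A^G$ is a direct summand of $A$ as an $A^G$-module, while $A$ is finite over $A^G$. For normality I would argue directly: $A$ is integrally closed because $V$ is smooth, and any $x \in \mathrm{Frac}(A^G) = \mathrm{Frac}(A)^G$ integral over $A^G$ is a fortiori integral over $A$, hence lies in $A$, and being $G$-invariant lies in $A^G$.

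For the Cohen--Macaulay property I would use the splitting above together with the Hochster--Eagon theorem: a direct summand $A^G$ of the Cohen--Macaulay ring $A$, finite over $A^G$, is itself Cohen--Macaulay. The same direct-summand structure drives the proof of rationality. Taking a resolution $f : Y \to U$, the splitting $\mathcal{O}_U \hookrightarrow \pi_* \mathcal{O}_V$ together with the smoothness of $V$ forces the higher direct images $R^i f_* \mathcal{O}_Y$ to vanish for $i > 0$; alternatively one may invoke that finite quotient singularities are log terminal and that log terminal singularities are rational.

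Finally, for $\Q$-factoriality I would start from a Weil divisor $D$ on $U$, pull it back to a $G$-invariant Weil divisor $\widetilde{D}$ on the smooth space $V$, where it is Cartier and determines a $G$-linearized line bundle $\mathcal{O}_V(\widetilde{D})$. The only obstruction to descending this bundle to $U$ is the characters through which the stabilizers act on the fibres over the fixed loci; since these characters are valued in roots of unity of order dividing $|G|$, a suitable power $\mathcal{O}_V(m\widetilde{D})$ is $G$-equivariantly trivial along the fixed loci and hence descends, so that $mD$ is Cartier. The main obstacle, I expect, is the rationality statement: unlike the other three properties, which are essentially formal consequences of the finite-quotient structure in characteristic zero, it genuinely requires either resolution of singularities combined with a vanishing theorem, or the nontrivial implication that log terminal singularities are rational.
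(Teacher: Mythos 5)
The paper does not actually prove this proposition: it is quoted as a known fact with pointers to Blache and to Koll\'ar--Mori, and your argument is precisely the standard one underlying those references (the Reynolds-operator splitting and integral closure for normality, Hochster--Eagon for Cohen--Macaulayness, descent of the $G$-linearized bundle $\mathcal{O}_V(m\widetilde{D})$ after killing the stabilizer characters for $\Q$-factoriality, and either a Boutot-type splitting argument or the klt-implies-rational theorem for rationality). Your sketch is correct and correctly identifies the one genuinely nontrivial point, the vanishing of $R^i f_*\mathcal{O}_Y$, which is the only step that would need to be expanded beyond a formal argument; the rest is exactly what the cited sources do.
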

\subsection{Differential forms on orbifolds}\label{dif}
Satake in \cite{Satake} defines the differential forms on an orbifold $X$ of dimension $n$ as follows. 
Let $(U_i)$ be an open cover of $X$ such that each $U_i$ admits a local uniformizing system $(V_i,G_i,\pi_i)$.
Let $\mathbb{K}$ be the field $\R$ or $\C$.
We define the \emph{sheaf of differential forms} of degree $d$ on $U_i$ by:
$$\mathcal{A}^d_{X|U_i}(\mathbb{K}):=\pi_{i*}(\mathcal{A}^d_{V_i}(\mathbb{K})^{G_i}),$$
where $\mathcal{A}^d_{V_i}(\mathbb{K})^{G_i}$ are the $\mathscr{C}^{\infty}$-forms of degree $d$ on $V_i$ with values in $\mathbb{K}$ invariant under the action of $G_i$. Satake in \cite{Satake} shows that the $\mathcal{A}^d_{X|U_i}(\mathbb{K})$ can be glue in a sheaf on $X$ that we denote by $\mathcal{A}^d_{X}(\mathbb{K})$. For simplicity in the notation, when $\mathbb{K}=\C$
we will omit the coefficient field. The same construction is possible for the \emph{holomorphic $p$-forms} and we denote the associated sheaf by $\Omega_X^p$. 

Satake in \cite{Satake} generalizes the de Rham theorems (see \cite[Theorem 1.9]{Blache} for the proof of (iii)).
\begin{thm}\label{de Rham}
Let $X$ be a compact orbifold of dimension $n$ and $k\in\left\{0,...,2n\right\}$.
\begin{itemize}
\item[(i)]
The map $H^k_{dR}(X)\rightarrow \Hom(H_k(X),\R)$ given by $\varphi\mapsto(c\mapsto\int_c \varphi)$ is an isomorphism. 
\item[(ii)]
The map $H^k_{dR}(X)\rightarrow H^{2n-k}_{dR}(X)^{\vee}$ given by $\varphi\mapsto(\psi\mapsto\int_X \psi\cdot\varphi)$ is an isomorphism.
\item[(iii)]
The map $H^*_{dR}(X)\simeq \Hom(H_*(X),\R)\simeq H^*(X,\R)$ induced from (i) is a ring isomorphism where $H^*(X,\R)$ is endowed with the cup-product and $H^*_{dR}(X)$ with the wedge product. 
\end{itemize}
\end{thm}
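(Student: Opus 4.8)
The plan is to realize the orbifold de Rham complex $(\mathcal{A}^\bullet_X,d)$ as a fine resolution of the constant sheaf $\underline{\R}_X$, so that $H^\bullet_{dR}(X)$ computes the sheaf cohomology $H^\bullet(X,\underline{\R})$, which for the locally contractible space $X$ agrees with singular cohomology $H^\bullet(X,\R)$; statement (i) then follows from the universal coefficient theorem over the field $\R$. Everything reduces, on a local uniformizing system $\pi_i\colon V_i\to U_i$, to ordinary smooth manifold theory on $V_i$ together with the exactness of the functor of $G_i$-invariants, which holds because $G_i$ is finite and we work in characteristic zero.

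First I would establish an orbifold Poincar\'e lemma. Choosing each $V_i$ to be a $G_i$-stable ball on which the action is linearised, the classical Poincar\'e lemma makes the augmented complex $\R\to\mathcal{A}^\bullet_{V_i}$ exact. The Reynolds operator $\frac{1}{|G_i|}\sum_{g\in G_i}g^\ast$ is a projector onto $G_i$-invariants commuting with $d$, so taking invariants preserves exactness; since $\R^{G_i}=\R$, the augmented complex $\underline{\R}_X\to\mathcal{A}^\bullet_X$ is a resolution. Fineness is obtained the same way: averaging an ordinary partition of unity subordinate to a common refinement produces $G_i$-invariant partitions of unity, so each $\mathcal{A}^d_X$ is a module over the fine sheaf $\mathcal{A}^0_X$ and is therefore fine, hence $\Gamma$-acyclic. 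The abstract de Rham theorem then gives $H^k_{dR}(X)\cong H^k(X,\underline{\R})$. Because $U_i\cong V_i/G_i$ is the quotient of a ball by a finite linear group and hence contractible (a cone), $X$ is locally contractible, so $H^k(X,\underline{\R})\cong H^k(X,\R)$; universal coefficients then give $H^k(X,\R)\cong H_k(X,\R)^\vee$, which is (i).

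For (ii) I would first check that integration of orbifold $2n$-forms is well defined and satisfies Stokes' theorem. Pulling back to charts and setting $\int_X\omega=\frac{1}{|G_i|}\int_{V_i}\rho_i\,\pi_i^\ast\omega$ for an invariant partition of unity $(\rho_i)$, Stokes follows from the smooth case, the singular locus contributing nothing since $\codim_{\R}\Sing X\geq2$. Consequently $(\varphi,\psi)\mapsto\int_X\varphi\wedge\psi$ descends to a pairing $H^k_{dR}(X)\times H^{2n-k}_{dR}(X)\to\R$. To see it is perfect I would transport it, via (i) and the multiplicativity established in (iii), to the topological cup-product pairing and invoke Poincar\'e duality for $X$. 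The point is that orbifolds are rational homology manifolds: locally $H^\bullet(U_i,\Q)\cong H^\bullet(V_i,\Q)^{G_i}$ has the cohomology of a point, and the local duality isomorphism of $V_i$ descends to $U_i$ by averaging, so the compact oriented space $X$ satisfies Poincar\'e duality with $\R$-coefficients and the pairing is nondegenerate. I expect this nondegeneracy to be the main obstacle: unlike the Poincar\'e lemma it is not a formal consequence of the chart description and genuinely uses that quotient singularities are $\Q$-homology manifolds.

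Finally, for the ring statement (iii), the resolution $(\mathcal{A}^\bullet_X,d)$ is a sheaf of differential graded algebras under $\wedge$, so the de Rham isomorphism is induced by a morphism of multiplicative resolutions and therefore sends the wedge product to the cup product on $H^\bullet(X,\R)$. Comparing with singular cochains through the integration map, which is a ring homomorphism up to chain homotopy, identifies $(H^\bullet_{dR}(X),\wedge)$ with $(H^\bullet(X,\R),\smile)$ as rings, and together with (i) this yields the chain of ring isomorphisms in (iii).
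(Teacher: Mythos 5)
The paper does not actually prove this theorem: it is quoted from Satake, with Blache cited for part (iii), so there is no internal proof to compare against. Your argument is the standard sheaf-theoretic one that underlies those references --- Reynolds operator to get the orbifold Poincar\'e lemma, averaged partitions of unity for fineness, the abstract de Rham theorem, and the transfer isomorphism $H^\bullet(V/G,\Q)\cong H^\bullet(V,\Q)^G$ to see that $X$ is an oriented $\Q$-homology manifold --- and it is essentially correct. Two small remarks. First, your logical order for (ii) is the reverse of the paper's: you establish topological Poincar\'e duality for the rational homology manifold $X$ and transport it to the de Rham pairing via (i) and (iii), whereas the paper takes the de Rham duality (ii) as given and \emph{deduces} rational Poincar\'e duality from it (its Corollary following the theorem); your direction is the one that actually requires the $\Q$-homology-manifold input, and you correctly identify that as the crux. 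Second, for (i) as stated you should say a word about why the abstract isomorphism $H^k_{dR}(X)\cong H^k(X,\underline{\R})\cong H_k(X)^\vee$ is realized by the explicit map $\varphi\mapsto(c\mapsto\int_c\varphi)$, including why integration of an orbifold form over a singular chain meeting $\Sing X$ makes sense (e.g.\ by representing homology classes by chains transverse to, or pushed off, the singular locus, which has real codimension at least $4$ under the conventions of the paper); this is routine but not free.
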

\begin{rmk}\label{Inject}
Let $r:\widetilde{X}\rightarrow X$ be a resolution of an orbifold $X$. Blache in \cite[Section 1.15]{Blache} proves that the map $r^*:H^*_{dR}(X)\rightarrow H^*_{dR}(\widetilde{X})$ can be defined directly using currents on $\widetilde{X}$ and moreover that it is an injective map. 
\end{rmk}
We can deduce from Theorem \ref{de Rham} the Poincaré duality with rational coefficients.
\begin{cor}\label{poincaréQ}
Let $X$ be a compact orbifold of dimension $n$ and $k\in\left\{0,...,2n\right\}$.
The cup product provides a perfect pairing:
$$H^k(X,\Q)\times H^{2n-k}(X,\Q)\rightarrow H^{2n}(X,\Q)\simeq \Q.$$
\end{cor}
\begin{proof}
First of all, we remark that (ii) and (iii) of Theorem \ref{de Rham} provide a perfect pairing given by the cup product with real coefficients:
\begin{equation}
H^k(X,\R)\times H^{2n-k}(X,\R)\rightarrow H^{2n}(X,\R)\simeq \R.
\label{poincaréReal}
\end{equation}
Moreover from the universal coefficient theorem, we have:
\begin{equation}
H^l(X,\R)=H^l(X,\Q)\otimes\R,
\label{coeffRQ}
\end{equation}
for all $l\in \left\{0,...,2n\right\}$.
Now consider the natural map provided by the cup product:
$$D_X:H^{k}(X,\Q)\rightarrow H^{2n-k}(X,\Q)^{\vee}.$$
From (\ref{poincaréReal}) and (\ref{coeffRQ}), $D_X$ is injective.
Then, for a reason of dimension, $D_X$ is also surjective. 
\end{proof}
\begin{rmk}\label{projformula}
It allows us to generalize the definition of the \emph{push-forward map}. Let $f:Y\rightarrow X$ be a continuous map between orbifolds. We can define $f_*:H^*(Y,\Q)\rightarrow H^*(X,\Q)$ by $f_*(\alpha):=D^{-1}_X(D_Y(\alpha)\circ f^*)$. Moreover, the \emph{projection formula} generalizes as well. 
\end{rmk}
\subsection{Hodge decomposition of Kähler orbifolds}
\begin{defi}
Let $X$ be an orbifold. A form $\omega\in \mathcal{A}^2_X(X,\R)$ is said \emph{Kähler} if for all local uniformizing system $(V,G,\pi)$ of an open set $U$, the form
$\omega_{X|U}\in \mathcal{A}^2_X(U,\R)=(\mathcal{A}^2_V)^G(V,\R)$ is a Kähler form on $V$. An orbifold which admits a Kähler form is called a \emph{Kähler orbifold}. 
\end{defi}
\begin{rmk}
An orbifold is Kähler if and only if it is Kähler as a complex space (cf. \cite[Definition 1.2, 4.1 and Remark 4.2]{Fujiki2} for the definition of a Kähler complex space and \cite[pages 793-795]{Zaffran} for the proof of the equivalence). There is a third equivalent definition for a Kähler form on $X$. A Kähler form on $X$ is a Kähler form $\omega$ on $X^*$ such that for all local uniformizing system $(V,G,\pi)$ of an open set $U$, the form $\pi^*\omega_{|U^*}$ extends to a Kähler form on $V$.
\end{rmk}
From now on $X$ is a compact Kähler orbifold of dimension $n$.
First, we recall from \cite[Section 2.5]{Peters} (see also \cite[Theorem 1.12]{Steenbrink}) that a Kähler orbifold admits a \emph{pure Hodge structure}. 
Let $j:X^*\hookrightarrow X$ be the embedding. Let $\Omega_X^p$ be the sheaf of holomorphic p-forms as defined in Section \ref{dif}. 
Peters and Steenbrink first show that: 
\begin{equation}
\Omega_X^p\simeq j_*(\Omega_{X^*}^p),
\label{PetersLemma}
\end{equation}
and deduce that $\Omega_X^\bullet$ is a resolution of the constant sheaf $\underline{\C}_X$.
Then, they consider the spectral sequence of hypercohomology:
$$E_1^{p,q}=H^q(X,\Omega_X^p)\Rightarrow \mathbb{H}^{p+q}(X,\Omega_X^\bullet)=H^{p+q}(X,\C).$$
Let $F^p H^k(X,\C)$ be the hodge filtration, we denote $H^{p,q}(X):=F^p H^{p+q}(X,\C)\cap \overline{F^q H^{p+q}(X,\C)}$.
Peters and Steenbrink show that the spectral sequence degenerates at the $E_1$ page and that $H^{p,q}(X)\simeq H^q(X,\Omega_X^p)$.
From this theorem and its proof we can deduce the following result which will be used several times in this paper:
\begin{thm}\label{Hodge}
Let $X$ be a compact Kähler orbifold of dimension $n$.
\begin{itemize}
\item[(i)]
We have the Hodge decomposition:
$$H^k(X,\C)=\bigoplus_{p+q=k} H^q(X,\Omega_X^p).$$
\item[(ii)]
The natural pairing 
$H^q(X,\Omega_X^p)\times H^{n-q}(X,\Omega_X^{n-p})\rightarrow H^{n}(X,\Omega_X^{n})\simeq \C$
is perfect.
\item[(iii)]
Moreover, if $r:\widetilde{X}\rightarrow X$ is a Kähler resolution of singularities of $X$, then $r^*:H^*(X,\C)\rightarrow H^*(\widetilde{X},\C)$ is an injective morphism of Hodge structures.
\end{itemize}
\end{thm}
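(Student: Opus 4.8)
The plan is to reduce all three statements to ordinary Hodge theory on the smooth local uniformizing charts, using that over $\C$ the functor of $G$-invariants is exact and commutes with every operator in sight. Concretely, fix an orbifold K\"ahler form $\omega$ (which exists by hypothesis); on each local uniformizing system $(V,G,\pi)$ its pullback is a $G$-invariant K\"ahler form on $V$. As $G$ acts by holomorphic isometries, the Hodge star $*$, the Laplacians $\Delta_d$, $\Delta_\partial$, $\Delta_{\bar\partial}$, the Lefschetz operator $L$ and its adjoint $\Lambda$ are all $G$-equivariant, hence restrict to operators on the orbifold forms $\mathcal{A}^\bullet_X(X)$. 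The resulting Laplacian is elliptic in the orbifold sense and $X$ is compact, so Baily's Hodge theorem for $V$-manifolds provides finite-dimensional spaces of harmonic orbifold forms $\mathcal{H}^k$ together with an orthogonal decomposition $\mathcal{A}^k_X(X) = \mathcal{H}^k \oplus \mathrm{im}\,d \oplus \mathrm{im}\,d^*$; combined with Satake's de Rham theorem (Theorem \ref{de Rham}) this yields $\mathcal{H}^k \cong H^k(X,\C)$.

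For (i), the $\bar\partial$-Poincar\'e lemma holds $G$-equivariantly on each chart (average a local $\bar\partial$-primitive over $G$), so the Dolbeault complex of orbifold $(p,q)$-forms resolves $\Omega_X^p$ and gives $H^q(X,\Omega_X^p) \cong H^{p,q}_{\bar\partial}(X)$. The K\"ahler identities hold on each chart, hence on $G$-invariant forms, forcing $\Delta_d = 2\Delta_{\bar\partial}$; therefore the harmonic forms split by bidegree, $\mathcal{H}^k = \bigoplus_{p+q=k}\mathcal{H}^{p,q}$, and each summand $\mathcal{H}^{p,q}$ represents $H^{p,q}_{\bar\partial}(X) \cong H^q(X,\Omega_X^p)$. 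This is precisely the asserted Hodge decomposition.

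For (ii), the $G$-equivariant Hodge star and complex conjugation give an antilinear isomorphism $\mathcal{H}^{p,q} \to \mathcal{H}^{n-p,n-q}$, $\alpha \mapsto *\bar\alpha$, under which the cup-product/integration pairing $(\alpha,\beta) \mapsto \int_X \alpha\wedge\beta$ becomes the positive-definite $L^2$ inner product $\int_X \alpha\wedge *\bar\alpha = \|\alpha\|^2$; hence the pairing is perfect. Algebraically this is Serre duality pairing the reflexive sheaves $\Omega_X^p$ and $\Omega_X^{n-p}$ into the dualizing sheaf, which equals $\Omega_X^n = j_*\Omega_{X^*}^n$ by Lemma \ref{PetersLemma}, since $X$ is normal and Cohen-Macaulay with $\codim \Sing X \geq 2$ (Proposition \ref{MildSingu}).

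For (iii), holomorphicity of $r$ makes $r^*$ preserve the bidegree of forms, so it sends $(p,q)$-classes to $(p,q)$-classes; together with the injectivity of $r^*$ recorded in Remark \ref{Inject}, this shows $r^*$ respects the two Hodge decompositions and is therefore a morphism of Hodge structures. The main obstacle is the analytic input of the first paragraph: one must verify that the descended Laplacian genuinely satisfies the conclusions of the Hodge theorem on the singular space $X$, i.e. that elliptic regularity and the harmonic decomposition survive passage to $G$-invariants chart by chart and glue globally. Once this $V$-manifold Hodge package is in hand, (i)--(iii) follow formally from the $G$-equivariance of the standard K\"ahler identities and of the Hodge star.
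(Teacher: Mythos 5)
Your argument is correct in substance, but it follows a genuinely different route from the paper: the paper does not prove Theorem \ref{Hodge} at all, it imports it from Peters--Steenbrink (Section 2.5 of their book), whose deduction rests on Lemma \ref{PetersLemma} ($\Omega_X^p\simeq j_*\Omega_{X^*}^p$) together with a K\"ahler resolution $\widetilde{X}\to X$, the injectivity of $r^*$ (Remark \ref{Inject}), and the identification of $H^k(X,\Q)$ with a sub-Hodge structure of $H^k(\widetilde{X},\Q)$ -- i.e.\ the Hodge structure is induced from a smooth model. You instead prove the statement intrinsically by $V$-manifold harmonic theory: $G$-equivariant K\"ahler identities on each uniformizing chart, the orbifold Laplacian, Baily's Hodge theorem, and a fine Dolbeault resolution of $\Omega_X^p$ by orbifold $(p,q)$-forms. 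Both are standard and valid; your approach is more self-contained and gives harmonic representatives (useful, e.g., for the Hodge--Riemann argument of Proposition \ref{Hodge--Riemann}), at the price of having to invoke or redo the elliptic package on orbifolds, whereas the paper's route gets (i)--(iii) almost for free from the smooth case via functoriality. Two points you should make explicit to close the argument fully: first, the perfectness in (ii) needs the Dolbeault identification $H^q(X,\Omega_X^p)\simeq\mathcal{H}^{p,q}$, which in turn needs that the orbifold $(p,q)$-forms are a fine resolution of $\Omega_X^p$ (partitions of unity on $X$ plus the $G$-averaged $\bar\partial$-Poincar\'e lemma, as you indicate); your parenthetical appeal to Serre duality for the reflexive sheaves is only heuristic as stated, since $X$ need not be Gorenstein and the $\Ext$-computation is not carried out. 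Second, in (iii) the map $r^*$ is defined via currents (Remark \ref{Inject}), because a smooth orbifold form does not literally pull back to a smooth form on $\widetilde{X}$; your bidegree argument therefore needs the observation that a closed current of pure bidegree $(p,q)$ on a compact K\"ahler manifold represents a class in $H^{p,q}(\widetilde{X})$ (regularization or the $\partial\bar\partial$-lemma for currents). Neither point is a genuine gap, but both deserve a sentence.
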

Let $Z$ be an analytic subset of $X$ of codimension $p$. We have seen in Section \ref{dif} that from de Rham's theorems (Theorem \ref{de Rham} (i) and (ii)), the class $[Z]\in H_{2n-2p}(X)$ provides a differential de Rham class $[Z]\in H_{dR}^{2p}(X)$.  
\begin{prop}\label{typepp}
Let $Z$ be an irreducible analytic subset of $X$. 
Let $[Z]$ be the class associated to $Z$ in $H_{dR}^{2p}(X)$, then $[Z]\in H^{p,p}(X)\cap H^{2p}(X,\Q)$.
\end{prop}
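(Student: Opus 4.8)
The plan is to prove the two assertions separately, working throughout from the characterisation of the de Rham class $[Z]$ furnished by Theorem \ref{de Rham}: combining (i) and (ii), $[Z]\in H^{2p}_{dR}(X)$ is the unique class such that
\[
\int_X \psi\wedge [Z]=\int_{Z_{\mathrm{reg}}}\psi|_{Z_{\mathrm{reg}}}
\]
for every closed form $\psi\in\mathcal{A}^{2n-2p}_X(X)$, where $Z_{\mathrm{reg}}$ is the smooth locus of $Z$, a complex manifold of dimension $n-p$, and the right-hand side is the convergent integration current of the analytic cycle $Z$. Everything then reduces to analysing this pairing against suitable test forms.

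For the rationality, I would use that $Z$ is an analytic cycle, so its homology class lies in the image of $H_{2n-2p}(X,\Z)$; consequently the functional $\psi\mapsto\int_{Z_{\mathrm{reg}}}\psi$ takes rational values on $H^{2n-2p}(X,\Q)$. Invoking the rational Poincar\'e duality established above (the perfect $\Q$-pairing given by the cup product), there is a unique class in $H^{2p}(X,\Q)$ inducing this functional; extending scalars to $\R$, it must agree with $[Z]$ by uniqueness in the displayed formula, so $[Z]$ lies in the image of $H^{2p}(X,\Q)$.

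For the type, I would decompose $[Z]=\sum_{s+t=2p}[Z]^{s,t}$ according to the Hodge decomposition of Theorem \ref{Hodge} (i). By the perfectness of the Hodge pairing of Theorem \ref{Hodge} (ii), the component $[Z]^{s,t}$ vanishes once $\int_X\psi\wedge[Z]=0$ for all $\psi\in H^{n-s,n-t}(X)$, since wedging a class of type $(n-s,n-t)$ with $[Z]^{s',t'}$ lands in $H^{n,n}(X)$ only when $(s',t')=(s,t)$. Thus it suffices to show $\int_{Z_{\mathrm{reg}}}\psi=0$ for $\psi$ of pure type $(n-s,n-t)$ whenever $(s,t)\neq(p,p)$ and $s+t=2p$. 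The delicate point, and the main obstacle, is that $Z$ may meet or even be entirely contained in $\Sing X$, so the restriction of the orbifold form $\psi$ to $Z_{\mathrm{reg}}$ cannot be read on $X^*$ directly. I would resolve this locally: around a smooth point of $Z$ pick a local uniformizing system $(V,G,\pi)$, so that $\psi$ is represented by a $G$-invariant form $\tilde\psi$ of type $(n-s,n-t)$ on $V\subset\C^n$ and $\pi^{-1}(Z)$ is a $G$-invariant analytic subset of the same dimension $n-p$. Restricting $\tilde\psi$ to the $(n-p)$-dimensional manifold $(\pi^{-1}Z)_{\mathrm{reg}}$ annihilates it, because $(s,t)\neq(p,p)$ with $s+t=2p$ forces $n-s>n-p$ or $n-t>n-p$, i.e. one bidegree exceeds the complex dimension of the submanifold. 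Hence $\psi|_{Z_{\mathrm{reg}}}$ vanishes near each smooth point, the integral is zero, and therefore $[Z]^{s,t}=0$ for all $(s,t)\neq(p,p)$, yielding $[Z]\in H^{p,p}(X)$.

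The crucial feature making the singular case work is that passing to a uniformizer preserves the dimension of $Z$, so the bidegree-versus-dimension count is unaffected by the quotient singularities of $X$; this is what lets me avoid any appeal to a resolution (though one could alternatively pull back via a K\"ahler resolution and use the injectivity and Hodge-morphism properties recorded in Remark \ref{Inject} and Theorem \ref{Hodge} (iii), reducing to the classical smooth statement).
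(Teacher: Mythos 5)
Your strategy is the same as the paper's: rationality via the rational Poincar\'e duality deduced from Theorem \ref{de Rham}, and the type statement via the vanishing of $\int_X[Z]\cdot\psi$ for $\psi$ of pure type $(a,b)\neq(n-p,n-p)$, which both you and the paper obtain from the same dimension count (one of the two degrees of $\psi$ exceeds $\dim_{\C}Z=n-p$), followed by the perfect pairing of Theorem \ref{Hodge}(ii). The rationality half and the reduction to this vanishing statement are fine.

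The gap is the identity you take as your starting point, namely that the class $[Z]$ of Theorem \ref{de Rham} satisfies $\int_X\psi\wedge[Z]=\int_{Z_{\mathrm{reg}}}\psi$ with the right-hand side computed chart-wise through local uniformizing systems. Theorem \ref{de Rham} only identifies $H^{2n-2p}_{dR}(X)$ with the dual of singular homology; it does not say that the pairing of a de Rham class with the fundamental class of an analytic subset is computed by integrating the invariant local representatives $\tilde\psi$ over $(\pi^{-1}Z)_{\mathrm{reg}}$. For $Z\not\subset\Sing X$ this follows from the classical Lelong--King argument on $X^*$ together with an extension across the measure-zero set $Z\cap\Sing X$; but for $Z\subset\Sing X$ --- precisely the case you single out as delicate --- it is an orbifold Lelong-type theorem that requires proof: one must check that the chart-wise integrals glue (the local degrees of $\pi$ over $Z$ and the stabilizers enter the normalization) and that the resulting functional really represents the singular homology class of $Z$. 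The paper's two-case structure exists exactly to avoid this: when $Z\subset\Sing X$ it uses Fujiki's stratification \cite[Lemma 1.4]{Fujiki} to find a sub-orbifold $Y\subset\Sing X$ with $Z\subset Y$ and $Z\not\subset\Sing Y$, replaces $\int_X[Z]\cdot\varphi$ by $\int_Z j^*\varphi$ using the purely topological adjunction for $j:Y\hookrightarrow X$ together with the fact that $j^*$ preserves Hodge types, and then treats the remaining case by passing to the strict transform on a resolution (which is only available because $Z$ is not contained in the modified locus), citing \cite[Theorem 11.21]{Voisin} and Theorem \ref{Hodge}(iii). To complete your argument you should either prove the chart-wise integration formula, or fall back on this reduction; note that your parenthetical alternative via a resolution and Remark \ref{Inject} still needs the stratification step first, since the strict transform of a $Z$ contained in $\Sing X$ is not defined.
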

\begin{proof}
We have $[Z]\in H^{2p}(X,\Q)$ by Corollary \ref{poincaréQ}.
\begin{itemize}
\item \textbf{First case}: $Z\not\subset \Sing X$. 

Let $r:\widetilde{X}\rightarrow X$ be a resolution of $X$. We can consider $Z'$ the strict transform of $Z$ in $\widetilde{X}$.
Let $\varphi\in H_{dR}^{2n-2p}(X,\C)$ such that $\varphi$ is of type $(a,b)\neq (n-p,n-p)$. 
We will prove that $\int_X[Z]\cdot \varphi=0$ and conclude using (ii) of Theorem \ref{Hodge}.
Using Theorem \ref{de Rham} and \cite[Theorem 11.21]{Voisin} we have:
\begin{align*}
\int_X[Z]\cdot \varphi&= \int_Z\varphi\\
&= \int_{Z'}r^*(\varphi)\\
&=\int_{Z'\smallsetminus\Sing Z'}r^*(\varphi).
\end{align*} 
From (iii) of Theorem \ref{Hodge}, $r^*(\varphi)$ is also of type $(a,b)\neq (n-p,n-p)$. 
It follows from an argument of dimension that:
$\int_{Z'\smallsetminus\Sing Z'}r^*(\varphi)=0$.
\item\textbf{Second case}: $Z\subset \Sing X$.

From \cite[Lemma 1.4]{Fujiki}, there is an orbifold $Y\subset \Sing X$ such that $Z\subset Y$ but $Z\not\subset \Sing Y$.
Let $j:Y\hookrightarrow X$ be the inclusion. By definition of the holomorphic forms in Section \ref{dif}, the morphism
$j^*:H^{2n-2p}(X,\C)\rightarrow H^{2n-2p}(Y,\C)$ is a morphism of Hodge structures. Let $\varphi\in H_{dR}^{2n-2p}(X,\C)$ such that $\varphi$ is of type $(a,b)\neq (n-p,n-p)$.
Using Theorem \ref{de Rham}, we have:
\begin{align*}
\int_X[Z]\cdot \varphi&= \int_Z\varphi\\
&=\int_{Z}j^*(\varphi).
\end{align*} 
\end{itemize}
Since $j^*$ is a morphism of Hodge structure, we are back to the first case. 
\end{proof}
In the case of the Néron-Severi group, we can be more precise and generalize the Lefschetz (1,1) theorem as a consequence of Theorem \ref{Hodge} (i). It will be necessary to deduce the Picard lattice of an irreducible symplectic orbifold from its period (cf. Section \ref{TorelliSection} and \ref{modu} for the definition of the period map). 
\begin{prop}\label{Lefschetz11}
Let $\NS(X)$ be the Néron-Severi group of $X$. Then:
$$\NS(X)\simeq H^{1,1}(X)\cap H^{2}(X,\Z).$$
\end{prop}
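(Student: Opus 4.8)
The plan is to reproduce the classical Lefschetz $(1,1)$ argument via the exponential sequence, verifying that the orbifold structure causes no trouble, and then to identify the relevant kernel using the Hodge decomposition of Theorem \ref{Hodge}. Recall that $\NS(X)$ is the image of the first Chern class map $c_1:\Pic(X)\to H^2(X,\Z)$. I would start from the exponential sequence of sheaves on the complex space $X$,
$$0\longrightarrow \Z\longrightarrow \mathcal{O}_X\stackrel{\exp(2\pi i\,\cdot)}{\longrightarrow}\mathcal{O}_X^*\longrightarrow 0,$$
which is exact for any reduced complex space, in particular for $X$. Its long exact cohomology sequence contains
$$H^1(X,\mathcal{O}_X^*)\stackrel{c_1}{\longrightarrow}H^2(X,\Z)\stackrel{\iota}{\longrightarrow}H^2(X,\mathcal{O}_X),$$
and since $\Pic(X)=H^1(X,\mathcal{O}_X^*)$, exactness gives $\NS(X)=\Image(c_1)=\Ker\iota$. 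It therefore remains to compute this kernel.

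Next I would interpret $\iota$ through Hodge theory. By definition $H^2(X,\mathcal{O}_X)=H^{0,2}(X)$, and $\iota$ is induced by the inclusion of constant sheaves $\Z\hookrightarrow\mathcal{O}_X$; hence it factors through the change-of-coefficients map $H^2(X,\Z)\to H^2(X,\C)$ followed by the map induced by $\C\hookrightarrow\mathcal{O}_X$. Exactly as in the smooth case, I would argue that under the decomposition of Theorem \ref{Hodge}(i), namely $H^2(X,\C)=H^{2,0}(X)\oplus H^{1,1}(X)\oplus H^{0,2}(X)$, this last map is the projection onto the summand $H^{0,2}(X)=H^2(X,\mathcal{O}_X)$.

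Finally I would use Hodge symmetry. Since $X$ carries a pure Hodge structure, complex conjugation on $H^2(X,\C)$ interchanges $H^{2,0}(X)$ and $H^{0,2}(X)$ while fixing the real subspace $H^2(X,\R)\supset H^2(X,\Z)$. Writing a real class as $\alpha=\alpha^{2,0}+\alpha^{1,1}+\alpha^{0,2}$, one has $\overline{\alpha^{0,2}}=\alpha^{2,0}$, so $\alpha^{0,2}=0$ if and only if $\alpha^{2,0}=0$, i.e. if and only if $\alpha\in H^{1,1}(X)$. Thus $\Ker\iota=H^{1,1}(X)\cap H^2(X,\Z)$, which combined with $\NS(X)=\Ker\iota$ yields the claim.

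The main obstacle I anticipate is the justification that $\iota$ is genuinely the Hodge projection onto $H^{0,2}(X)$ in the orbifold setting: this rests on the compatibility of the decomposition of Theorem \ref{Hodge} with the natural map induced by $\C\hookrightarrow\mathcal{O}_X$ and with the real structure (the symmetry $\overline{H^{p,q}(X)}=H^{q,p}(X)$). The only other orbifold-specific inputs are the exactness of the exponential sequence and the availability of the Hodge theory, both of which are in hand; the remainder is the standard $(1,1)$ argument.
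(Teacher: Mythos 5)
Your skeleton is the same as the paper's: exponential sequence, $\NS(X)=\Ker\bigl(H^2(X,\Z)\to H^2(X,\mathcal{O}_X)\bigr)$, identification of that map with the Hodge projection, then Hodge symmetry. But the one step you defer --- that the map $H^2(X,\C)\to H^2(X,\mathcal{O}_X)$ induced by $\C\hookrightarrow\mathcal{O}_X$ is the projection onto $H^{0,2}(X)$ in the decomposition of Theorem \ref{Hodge}(i) --- is precisely the orbifold-specific content of the proposition, and saying it goes ``exactly as in the smooth case'' is not a proof. The decomposition of Theorem \ref{Hodge}(i) is not produced here by harmonic theory on $X$; it is the Peters--Steenbrink decomposition with $\Omega_X^p\simeq j_*\Omega_{X^*}^p$ on the singular space, so the smooth-case argument (comparison of the de Rham and Dolbeault resolutions of $\C$ and $\mathcal{O}_X$) does not apply verbatim, and the compatibility you flag as ``the main obstacle'' is exactly what must be established. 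Everything else in your write-up (exactness of the exponential sequence on a reduced complex space, $\overline{H^{p,q}}=H^{q,p}$, the kernel computation) is routine and correct.

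The paper closes this gap by passing to a \emph{projective} resolution $r:\widetilde{X}\to X$, which is K\"ahler by Lemma \ref{K�hler}. On $\widetilde{X}$ the commutativity of the diagram identifying $\alpha:H^2(\widetilde{X},\C)\to H^2(\widetilde{X},\mathcal{O}_{\widetilde{X}})$ with the Hodge projection is classical; the square relating $f\otimes\C$ on $X$ to $\alpha$ on $\widetilde{X}$ via $r^*$ commutes (this is the content of \cite[Proposition B.2.10]{Tim}); and $r^*$ is a morphism of Hodge structures by Theorem \ref{Hodge}(iii) and is injective (Remark \ref{Inject}). Chasing these diagrams forces the commutativity downstairs. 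If you want to complete your proof without the resolution, you would instead have to identify $H^q(X,j_*\Omega_{X^*}^p)$ with Satake's orbifold Dolbeault cohomology and rerun the resolution-of-sheaves comparison there; either way, an actual argument is required at the point you left open.
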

\begin{proof}
The proposition can be proved as in the smooth case.
The exact sequence of sheaves:
$$\xymatrix@C15pt{
 0\ar[r]&\Z\ar[r] &\ar[r]\mathcal{O}_X&\ar[r]\mathcal{O}_X^*&0,
    }$$
 induces an exact sequence of cohomology groups:
$$\xymatrix{
 H^1(X,\mathcal{O}_X^*)\ar[r]^{h} &\ar[r]H^2(X,\Z)&H^2(X,\mathcal{O}_X).
    }$$
The image of $h$ is the Néron-Severi group. However, the inclusion $\C\hookrightarrow \mathcal{O}_X$ induces the following exact sequence:
$$\xymatrix{
 0\ar[r]&F^1H^2(X,\C)\ar[r]&\ar[r]H^2(X,\C)&H^2(X,\mathcal{O}_X)\ar[r]&0,
    }$$
because $(\mathcal{A}^._{X},d)$ is a resolution of $\C$ and $(\mathcal{A}^{0,.}_{X},\overline{\partial})$ a resolution of $\mathcal{O}_X$.
Then, we finish the proof by noticing that the Hodge decomposition provides $F^1H^2(X,\C)\cap H^2(X,\Z)=H^{1,1}(X)\cap H^2(X,\Z)$.
\end{proof}
We finish this section by generalizing the Hodge--Riemann relation used in Section \ref{TorelliSection} for calculating the signature of the Beauville--Bogomolov form.
\begin{prop}\label{Hodge--Riemann}
Let $\omega$ be a Kähler form on $X$. We define the Lefschetz operator $L:H^{p,q}(X)\rightarrow H^{p+1,q+1}(X), \varphi\mapsto \omega\cdot \varphi$.
Assume $p+q\leq n$, we set $H^{p,q}(X)_p:=\Ker L^{n-p-q+1}$. Let $\alpha\in H^{p,q}(X)_p$, then: 
$$i^{p-q}(-1)^{\frac{(p+q)(p+q-1)}{2}}\int_X\alpha\cdot \overline{\alpha}\cdot\omega^{n-p-q}\geq0.$$
Moreover, we have an equality if and only if $\alpha=0$.
\end{prop}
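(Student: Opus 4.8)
The plan is to reduce the statement to the classical Hodge--Riemann bilinear relations on a Kähler \emph{manifold}, using the resolution of singularities machinery already assembled in this section. The key point is that the orbifold Hodge theory behaves functorially with respect to a Kähler resolution, so the positivity should transfer from the smooth model.

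First I would set up the geometry. Let $r:\widetilde{X}\rightarrow X$ be a projective resolution as in the proof of Proposition \ref{Lefschetz11}, which by Lemma \ref{K�hler} carries a Kähler class of the form $\widetilde{\omega}:=r^*(\omega)+t(\pi_2\circ j)^*(H)$ for $t\in\R^*_+$. The essential structural facts I intend to exploit are: (a) by Remark \ref{Inject} the pullback $r^*:H^*_{dR}(X)\rightarrow H^*_{dR}(\widetilde{X})$ is \emph{injective}; (b) by Theorem \ref{Hodge}(iii) it is a morphism of Hodge structures, so $r^*\alpha$ is again of type $(p,q)$; and (c) the projection formula (Remark \ref{projformula}) together with de Rham's theorem (Theorem \ref{de Rham}) lets me compute orbifold integrals upstairs, via $\int_X\beta=\int_{\widetilde{X}}r^*\beta$ for top-degree classes.

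Next I would carry out the comparison. The danger is that the orbifold Lefschetz class $H^{p,q}(X)_p=\Ker L^{n-p-q+1}$ need \emph{not} map into the corresponding primitive space for $\widetilde\omega$ upstairs, since the Kähler form on $\widetilde X$ is $\widetilde\omega$, not $r^*\omega$. To handle this I would decompose $r^*\alpha$ into its $\widetilde\omega$-primitive components via the smooth Lefschetz decomposition on $\widetilde X$, namely $r^*\alpha=\sum_{j\geq 0}\widetilde{L}^j\gamma_j$ with each $\gamma_j$ primitive of type $(p-j,q-j)$, where $\widetilde L=\widetilde\omega\cdot(-)$. Applying the classical Hodge--Riemann relations (\cite[Theorem 6.32]{Voisin}, say) to each $\gamma_j$ gives the correct-sign positivity for each primitive piece. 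The remaining work is to relate $\int_X i^{p-q}(-1)^{(p+q)(p+q-1)/2}\alpha\cdot\overline\alpha\cdot\omega^{n-p-q}$ to these smooth primitive integrals; the cleanest route is to observe that the orbifold integral is computed with $\omega$, i.e. with $r^*\omega=\widetilde\omega-t(\pi_2\circ j)^*(H)$, and then to take a limit or a degree argument that isolates the leading term. I expect this to be the main obstacle, and the honest fix is to prove the statement directly by the standard pointwise argument of Hodge theory rather than via the resolution.

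Indeed, the safer and more self-contained approach, which I would ultimately prefer, is to prove the pointwise Hodge--Riemann inequality on the local uniformizing systems and then integrate. On each chart $(V,G,\pi)$ the form $\omega$ pulls back to a genuine Kähler form on the smooth open set $V$, and a harmonic representative of the $G$-invariant class $\alpha$ is a $G$-invariant harmonic form; the classical pointwise Hodge--Riemann inequality holds at every point of $V$ for primitive forms, with equality only for the zero form. Since $H^{p,q}(X)$ is computed by $G$-invariant harmonic forms (Theorem \ref{Hodge}) and the condition $\alpha\in\Ker L^{n-p-q+1}$ is exactly primitivity of the harmonic representative, integrating the pointwise inequality over $X$ (equivalently, over a fundamental domain in each chart, using Theorem \ref{de Rham} to identify the integral) yields
$$i^{p-q}(-1)^{\frac{(p+q)(p+q-1)}{2}}\int_X\alpha\cdot\overline{\alpha}\cdot\omega^{n-p-q}\geq 0,$$
with equality forcing the integrand, hence the harmonic representative, hence $\alpha$, to vanish. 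The only subtlety here is the passage from the global primitivity condition $L^{n-p-q+1}\alpha=0$ in cohomology to pointwise primitivity of the harmonic representative; this follows because the Lefschetz operator commutes with the Laplacian on a Kähler orbifold (as it does on each chart), so $L^{n-p-q+1}$ of the harmonic representative is again harmonic, and a harmonic form that is exact must vanish.
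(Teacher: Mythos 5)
Your first route is essentially the paper's own: pull $\alpha$ back along the projective resolution $r:\widetilde{X}\rightarrow X$ and invoke the smooth Hodge--Riemann relations. The obstacle you single out --- that $H^{p,q}(X)_p$ is defined by $r^*\omega$-primitivity while the K\"ahler class upstairs is $\widetilde{\omega}=r^*(\omega)+(\pi_2\circ j)^*(H)$ --- is precisely the point the paper has to address, but the paper does not Lefschetz-decompose $r^*\alpha$ into $\widetilde{\omega}$-primitive pieces: it shows directly that $r^*\alpha$ lies in $H^{p,q}(\widetilde{X})_p$ for $\widetilde{\omega}$ (the type is preserved because $r^*$ is a morphism of Hodge structures by Theorem \ref{Hodge}(iii), and the primitivity condition is checked by the projection formula of Remark \ref{projformula}, which kills the contributions of $(\pi_2\circ j)^*(H)$), then uses the projection formula once more to identify $\int_{\widetilde{X}}r^*\alpha\cdot r^*\overline{\alpha}\cdot\widetilde{\omega}^{\,n-p-q}$ with $\int_X\alpha\cdot\overline{\alpha}\cdot\omega^{n-p-q}$, applies \cite[Proposition 3.3.15]{Huybrechts3} on $\widetilde{X}$, and settles the equality case by the injectivity of $r^*$ (Remark \ref{Inject}). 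Since you explicitly abandon this route without closing the gap, it does not stand on its own as written. Your second, preferred route --- $G$-invariant harmonic representatives on the local uniformizing systems, the pointwise Hodge--Riemann inequality for primitive forms, and integration --- is a genuinely different, intrinsic argument, and its logic is sound: the commutation of $L$ with the Laplacian does upgrade the cohomological condition $L^{n-p-q+1}\alpha=0$ to pointwise primitivity of the harmonic representative, and the pointwise inequality (strict off the zero form) yields both the sign and the equality case. The one substantive caveat is your appeal to Theorem \ref{Hodge} for the statement that $H^{p,q}(X)$ is computed by $G$-invariant harmonic forms: that theorem is obtained via $\Omega_X^p\simeq j_*\Omega_{X^*}^p$ and says nothing about harmonic theory, so you must import the classical Hodge theory of compact K\"ahler $V$-manifolds (Baily, after Satake) as an additional reference. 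With that citation supplied, your proof is complete and arguably more self-contained than the paper's, at the price of machinery the paper deliberately avoids; the paper's proof gets by with only the resolution, the projection formula and the global smooth statement.
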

To prove this proposition, we will need to consider a resolution of singularities. 
\begin{lemme}\label{Kähler}
Let $\omega$ be a Kähler class on $X$. Let $r:\widetilde{X}\rightarrow X$ be a blow-up in a smooth subset with exceptional divisor $E$.  
Then there exists $R_0\in\mathbb{R}^*_+$ such that for all $R\geq R_0$, $Rr^*(\omega)-\left[E\right]$ is a Kähler class on $\widetilde{X}$.
\end{lemme}
\begin{proof}
This lemma is given by the proof of \cite[Proposition 4.6 (2) and (4)]{Bingener}.

We can also adapt the proof of \cite[Section 3.3.3]{Voisin} given in the smooth case.
Let $Y\subset X$ be the center of the blow-up $r$.
We can find a open cover ($V_i$) of $X$ such that for each $i$, $V_i$ can be embedded as a closed space in a smooth manifold $M_i$.
Then the morphism $r: r^{-1}(V_i)\rightarrow V_i$ can be identified with the morphism $\widetilde{V_i}\rightarrow V_i$ induced by the blow-up $r_i:\widetilde{M_i}\rightarrow M_i$ in $Y\cap V_i$; $\widetilde{V_i}$ being the strict transform of $V_i$ by $r_i$. We also denote by $E_i$ the exceptional divisor of $r_i$. As explain in the proof of \cite[Proposition 3.24]{Voisin}, we can find a metric $h_i$ on $\mathcal{O}_{\widetilde{M_i}}(-E_i)$ such that the induced Chern form $\omega_i$ is strictly positive on the fibers of $E_i\rightarrow Y\cap V_i$ and zero outside a compact neighborhood of $E_i$. These metric $h_i$ restrict to metrics on $\mathcal{O}_{\widetilde{V_i}}(-E_i\cap \widetilde{V_i})$ 
which can be glued together into a metric $h$ on 
$\mathcal{O}_{\widetilde{X}}(-E)$. By construction of $h$, the associated Chern form $\omega_E$ will be strictly positive on each fibers of $E\rightarrow Y$ and zero outside a compact neighborhood of $E$. Then a compactness argument allows to conclude the proof.
\end{proof}
\begin{proof}[Proof of Proposition \ref{Hodge--Riemann}]
Let $\omega$ be a Kähler class on $X$. Let $r:\widetilde{X}\rightarrow X$ be a resolution of $X$ obtained by a finite sequence of blow-ups (see \cite[Theorem 13.2]{BM}).
Applying Lemma \ref{Kähler} recursively, we can find a real $R_0\in\mathbb{R}_+^*$ and a linear combination $E$ of exceptional divisors such that $\widetilde{\omega}:=r^*(R_0\omega)+\left[E\right]$ is a Kähler class on $\widetilde{X}$. We denote by $H^{p,q}(X)_p$ (resp. $H^{p,q}(\widetilde{X})_p$) the set of primitive classes associated to $\omega$ (resp. $\widetilde{\omega}$). Let $\alpha\in H^{p,q}(X)_p$, we can see that $r^*(\alpha)\in H^{p,q}(\widetilde{X})_p$.
Indeed, from Theorem \ref{Hodge} (iii), we know that $r^*$ is a morphism of Hodge structure. Moreover from the projection formula (see Remark \ref{projformula}):
$$\int_{\widetilde{X}}\left(r^*(R_0\omega)+\left[E\right]\right)^{n-p-q+1}\cdot r^*(\alpha)=\int_{\widetilde{X}} r^*(R_0\omega)^{n-p-q+1}\cdot r^*(\alpha)=\int_{X} (R_0\omega)^{n-p-q+1}\cdot \alpha=0.$$
Let denote $C:=i^{p-q}(-1)^{\frac{(p+q)(p+q-1)}{2}}$, the projection formula also provides:
$$C\int_{\widetilde{X}} r^*(\alpha)\cdot r^*(\overline{\alpha})\cdot r^*(R_0\omega)^{n-p-q}=C\int_{\widetilde{X}} r^*(\alpha)\cdot r^*(\overline{\alpha})\cdot\left(r^*(R_0\omega)+\left[E\right]\right)^{n-p-q}\geq0,$$
where the positivity comes from the Hodge--Riemann relation in the smooth case (see for instance \cite[Proposition 3.3.15]{Huybrechts3}).
We conclude by using the injectivity of the map $r^*$ (see Remark \ref{Inject}).
\end{proof}
\begin{rmk}
As explained in \cite{Steenbrink}, with the same method, we can prove the Hard Lefschetz theorem. 
\end{rmk}
\section{Moduli space of marked holomorphically symplectic orbifolds}\label{Moduli}
\subsection{Definition of symplectic orbifolds}
\begin{defi}\label{VS}
A compact Kähler orbifold $X$ with $\codim\Sing X\geq4$ is said \emph{symplectic} if there exists a non-degenerate holomorphic 2-form on the smooth locus $X^*$. The orbifold $X$ is said \emph{primitively symplectic} if, further, $h^{2,0}(X)=1$. A primitively symplectic orbifold $X$ is said \emph{irreducible symplectic} if $X^*$ is simply connected.   
\end{defi}
\begin{rmk}
The condition $\codim\Sing X\geq4$ is not restrictive. Indeed from \cite[Proposition 2.7]{Fujiki}, we cannot find any component of $\Sing X$ of codimension 3 and because of \cite[Corollary 1]{NotaNami} the components of $\Sing X$ of codimension 2 can be resolved such that the resolution is still a symplectic orbifold (in practice, we can use \cite[Proposition 2.9]{Fujiki}). 
\end{rmk}
\begin{rmk}
Actually, for many further results, simple connectedness is not needed. 
This is one of the reasons why we will use the notion of primitively symplectic orbifold considered by Fujiki (cf. \cite[Defintion 2.1]{Fujiki}).
\end{rmk}
\begin{rmk}
If $X$ is a symplectic orbifold and $r:\widetilde{X}\rightarrow X$ a resolution of singularities then a holomorphic 2-form on $X^*$ extends uniquely to a holomorphic 2-form on $\widetilde{X}$ (see \cite[Corollary 1.8]{Kebekus}).  
\end{rmk}
Irreducible symplectic orbifolds are of particular interest since they appear as elementary bricks in the generalization of the Bogomolov decomposition theorem by Campana.
\begin{thm}[\cite{Campana}, Théorème 6.4]\label{campapa}
Let $X$ be a compact Kähler orbifold with $c_1(X)=0$. Then there exists $\gamma:\bar{X}\rightarrow X$ a finite cover of orbifolds (see \cite[Definition 5.1]{Campana}) with:
$$\bar{X}=T\times X_1\times\cdots X_k\times Y_1\times \cdots Y_q,$$
where $T$ is a torus, the $X_i$ are irreducible symplectic orbifolds and the $Y_i$ are Calabi--Yau orbifolds (that is compact Kähler orbifolds with simply connected smooth locus, trivial canonical bundle and such that $h^{2,0}(Y_i)=0$).
\end{thm}
\begin{rmk}
In particular, we can apply the previous theorem to primitively symplectic orbifolds. Let $X$ be a primitively symplectic orbifold, by Theorem \ref{campapa}, there exists a finite cover of orbifolds $\gamma:\bar{X}\rightarrow X$ with:
$$\bar{X}=T\times X_1\times\cdots X_k,$$
where $T$ is a torus and the $X_i$ are irreducible symplectic orbifolds.

Indeed, let $\sigma$ be the non-degenerate holomorphic 2-form on $X^*$, then $\gamma^*(\sigma)$ is also non-degenerate. This is why there is no Calabi--Yau factors in the decomposition. 
\end{rmk}
\begin{rmk}
Note that in the case of a projective orbifold $X$ with $\codim\Sing X\geq4$, our definition of irreducible symplectic coincides with the one of Greb--Kebekus--Peternell \cite[Definition 8.16]{Greb} (see also \cite[Theorem 1.5]{Horing} for the Bogomolov decomposition theorem in their context). Indeed, let $X$ be an irreducible symplectic orbifold in our sense and $\gamma:Y\rightarrow X$ a quasi-étale cover. By purity of branch loci, $\gamma$ is étale over the smooth locus $X^*$ of $X$. Since $\pi_1(X^*)=0$, the cover $\gamma$ is necessarily trivial. Hence, $X$ is irreducible symplectic in the sense of Greb--Kebekus--Peternell. Reciprocally, by uniqueness of the Bogomolov decompositions, a projective orbifold which is irreducible in the sense of Greb--Kebekus--Peternell will be irreducible symplectic in our sense.
\end{rmk}

\subsection{Examples of irreducible symplectic orbifolds}
There are many of examples of primitively symplectic orbifolds (see \cite[Section 13]{Fujiki}, \cite{Markou}, \cite{Matteini}, \cite{Lol2}). However, it was not studied in the literature if these examples were irreducible symplectic.
Since we have seen in the previous section that irreducible symplectic orbifolds are of particular interest, we provide here two examples. 

Let $(S,i)$ be a K3 surface or a 2-dimensional complex torus endowed with a symplectic involution. When $S$ is a K3 surface (resp. 2-dimensional complex torus), the involution $i$ gives an involution $i^{[2]}$ on $S^{[2]}$ (resp. on the generalized Kummer $K_2(S)$) which fixes a K3 surface $\Sigma$ and some isolated points (see \cite[Theorem 4.1]{Mong}, (resp. \cite[Section 1.2.1]{Tari})). Since $\Fix i^{[2]}$ has connected components of codimension at least 2, the singular locus of $S^{[2]}/i^{[2]}$ (resp. $K_2(S)/i^{[2]}$) is given by the image of $\Fix i^{[2]}$ by the quotient map (see for instance \cite[Proposition 6]{Prill}). We denote by $M'$ (resp. $K'$) the blow-up of $S^{[2]}/i^{[2]}$ (resp. $K_2(S)/i^{[2]}$) in the image of $\Sigma$ when $S$ is a K3 surface (resp. a 2-dimensional complex torus). 
\begin{prop}
The orbifolds $M'$ and $K'$ are irreducible symplectic.
\end{prop}
\begin{proof}
The proof is identical for $K'$ and $M'$, so we only give the proof for $M'$.
The orbifold $M'$ is primitively symplectic because of \cite[Proposition 2.9]{Fujiki}. We denote by $U$ the smooth locus of $S^{[2]}/i^{[2]}$; we have $U=\frac{S^{[2]}\smallsetminus \Fix i^{[2]}}{i^{[2]}}$. We also denote by $U'$ the smooth locus of $M'$. We show that $\pi_1(U')=0$.

Since $S^{[2]}$ is smooth and $\codim\Fix i^{[2]}=2$, we have $\pi_1(S^{[2]}\smallsetminus \Fix i^{[2]})=0$. It follows that $\pi_1(U)=\Z/2\Z$. Moreover, we have $U=U'\smallsetminus \widetilde{\Sigma}$ where $\widetilde{\Sigma}$ is the exceptional divisor of 
$M'\rightarrow S^{[2]}/i^{[2]}$. Since $\widetilde{\Sigma}$ is simply connected, considering a tubular neighborhood of $\widetilde{\Sigma}$ in $U'$, the Seifert--van Kampen theorem provides a surjection $\pi_1(U)\rightarrow \pi_1(U')$. So $\pi_1(U')$ can only be $\Z/2\Z$ or 0.
Let $\widetilde{M}$ be the blow-up of $M'$ in the singularities. Looking at the following exact sequence:
$$\xymatrix@R10pt{
H^{2}(\widetilde{M},U',\Z)\ar[r]& H^2(\widetilde{M},\Z) \ar[r]& H^{2}(U',\Z)\ar[r]& 0},$$
the proof of \cite[Lemma 2.33]{Lol} (see \cite[Lemma 8.13 (iv)]{Kapfer} for $K'$) shows that $H^{2}(U',\Z)$ is torsion free. 
It follows that $\pi_1(U')=0$. Indeed, if we had $\pi_1(U')=\Z/2\Z$, the universal coefficient theorem would have provided $\tors H^{2}(U',\Z)=\tors H_1(U')=\Z/2\Z$ (where $\tors$ refers to the torsion).
\end{proof}
\subsection{Stability under deformation}
In this section, we want to show that being irreducible or primitively symplectic orbifold is stable under deformation. In particular, it will be an object with promising properties for the construction of a moduli space (cf. Section \ref{TorelliSection}). 
First, we provide some reminders from \cite[Section 3]{Fujiki}.
\begin{defi}
Let $X$ be a compact orbifold.
A deformation $f:\mathscr{X}\rightarrow S$ of $X$ is said to be of \emph{fixed local analytic type} if any point $x\in\mathscr{X}$ admits a neighborhood $\mathscr{U}$ which is isomorphic over $f(\mathscr{U})$ to $\left(\mathscr{U}\cap f^{-1}(x)\right)\times f(\mathscr{U})$.
\end{defi}
\begin{prop}[\cite{Fujiki}, Lemma 3.3]\label{triv}
Let $X$ be a compact orbifold.
Let $f:\mathscr{X}\rightarrow S$ be a deformation of $X$. Assume that $\codim \Sing X\geq 3$, then $f$ is of fixed local analytic type.
\end{prop}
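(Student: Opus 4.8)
The plan is to reduce the statement to a local question about the germs of $X$ and to invoke the rigidity of quotient singularities. Being of fixed local analytic type is a condition that is local on $\mathscr{X}$, so I would fix a point $x$ of the central fibre $X_0\cong X$, set $s=f(x)$, and study the germ of $f$ at $x$. Since $X$ is an orbifold, the germ $(X_0,x)$ is analytically isomorphic to a germ of quotient singularity $(\mathbb{C}^n/G,0)$ coming from a local uniformizing system $(V,G,\pi)$; the germ of $\mathscr{X}$ at $x$ is then a deformation of this germ, and the whole point is to show that this local deformation is trivial, i.e. isomorphic over $(S,s)$ to the product $(\mathbb{C}^n/G,0)\times (S,s)$.

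If $x$ is a smooth point of $X_0$, then $f$ is a submersion near $x$ and the desired product structure is immediate from the implicit function theorem. The content of the proposition is thus concentrated at the singular points. At such a point, I would use Remark \ref{PrillR} to arrange that no $g\in G$ acts as a quasi-reflection, so that $\Sing X$ is locally the image of $\bigcup_{g\neq 1}\Fix g$. The hypothesis $\codim\Sing X\geq 3$ then says exactly that $G$ acts freely outside a closed subset of $V$ of codimension at least $3$, and in particular forces $n\geq 3$. This is precisely the hypothesis of Schlessinger's rigidity theorem, which guarantees that such a quotient singularity $\mathbb{C}^n/G$ is rigid: its cotangent module $T^1$ vanishes, so it carries no nontrivial infinitesimal deformation.

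It remains to upgrade infinitesimal rigidity to genuine local triviality of $f$. I would invoke the existence of a semiuniversal deformation of the germ $(X_0,x)$; since $T^1=0$, the base of this semiuniversal deformation is a reduced point, so every deformation of the germ, and in particular the germ of $f$ at $x$, is induced by the constant map $(S,s)\to\{\mathrm{pt}\}$ and is therefore the trivial product deformation. As $x$ ranges over $X_0$ this produces a product structure along the whole central fibre, and after shrinking $\mathscr{X}$ (recall $S$ is a deformation germ) it yields the required isomorphism $\mathscr{U}\simeq(\mathscr{U}\cap f^{-1}(x))\times f(\mathscr{U})$ over $f(\mathscr{U})$ at every point.

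The step I expect to be the main obstacle is this last passage from $T^1=0$ to actual local triviality over the possibly non-reduced base $S$: because the singular locus of $X$ need not be isolated, one must make sure that the relevant deformation theory (existence of a semiuniversal object and the identification of its tangent space with $T^1$) is available for these non-isolated quotient germs, and that the vanishing of $T^1$ really kills the deformation over an arbitrary base and not merely to first order.
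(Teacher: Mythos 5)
The paper does not actually prove this proposition: it is quoted verbatim from Fujiki (Lemma 3.3 of the cited paper), and Fujiki's argument rests on precisely the ingredient you use, namely Schlessinger's rigidity theorem for quotient singularities that are smooth in codimension two. So your skeleton --- localize, normalize the uniformizing system so that no element of $G$ is a quasi-reflection, observe that $\codim\Sing X\geq 3$ then forces the $G$-action on $V$ to be free outside a set of codimension at least $3$, and invoke rigidity of $\C^n/G$ --- is the same as that of the cited source.

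The step you flag is indeed the only delicate point, and as literally written it does not go through: since $\Sing X$ need not be isolated, the germ $(X,x)$ can have a non-isolated singularity, so Grauert's existence theorem for semiuniversal deformations of germs does not apply and you cannot argue that ``the base of the semiuniversal deformation is a reduced point.'' The standard repair is to avoid the semiuniversal object of the germ altogether. Schlessinger's theorem gives the vanishing of the module $T^1_{(X,x)}=\Ext^1(\Omega_{(X,x)},\mathcal{O}_{(X,x)})$, equivalently of the sheaf $\mathcal{T}^1_X$, and one then trivializes the germ of $f$ at $x$ order by order over the Artinian quotients of $\mathcal{O}_{S,s}$: the obstruction to extending a trivialization from the $k$-th to the $(k+1)$-st infinitesimal neighbourhood lies in $T^1_{(X,x)}\otimes\mathfrak{m}^{k+1}/\mathfrak{m}^{k+2}=0$. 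This produces a formal trivialization, which is promoted to a convergent one by Artin approximation; alternatively, one compares the deformation functor of the compact space $X$ with its subfunctor of locally trivial deformations, whose difference is controlled by $H^0(X,\mathcal{T}^1_X)=0$. With this replacement your argument is complete and coincides with the proof of the result the paper cites.
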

\begin{rmk}
This result has been generalized by Namikawa for $\Q$-factorial projective varieties with terminal singularities in \cite{Namikawa2}.
\end{rmk}
\begin{cor}[\cite{Fujiki}, Lemma 3.1]\label{constantsheaf}
Let $X$ be a compact orbifold.
Let $f:\mathscr{X}\rightarrow S$ be a deformation of $X$. Assume that $\codim \Sing X\geq 3$, then there exists a homeomophism: $h:\mathscr{X}\rightarrow X\times S$. In particular, 
$\DR^if_*\mathbb{K}$ is a constant sheaf on $S$ for any $i$ where $\mathbb{K}=\mathbb{R}$ or $\C$.
\end{cor}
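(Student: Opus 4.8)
The plan is to upgrade the pointwise product structure furnished by Proposition \ref{triv} to a global topological trivialization by an Ehresmann-type argument carried out on local uniformizing systems, and then to read off the statement about $\DR^{i}f_{*}\mathbb{K}$ from the resulting homeomorphism. Since a deformation is the germ of a family, I may assume $S$ is a contractible (for instance polydisk) representative and that $f$ is proper with central fibre $X$. By Proposition \ref{triv}, $f$ is of fixed local analytic type: every $x\in\mathscr{X}$ has a neighborhood $\mathscr{U}$ isomorphic over $f(\mathscr{U})\subset S$ to a product of an open subset of the fibre through $x$ with the open set $f(\mathscr{U})$. In particular $f$ is topologically locally trivial with fibre $X$, so over a connected $S$ all fibres are isomorphic to $X$.

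To globalize I would build the trivialization by integrating vector fields. Cover $\mathscr{X}$ by local uniformizing systems $(V_i,G_i,\pi_i)$ adapted to the local product structure, lift a frame $\partial/\partial t_1,\dots,\partial/\partial t_{\dim S}$ of $S$ to $G_i$-invariant vector fields on each $V_i$ projecting to the $\partial/\partial t_j$, and patch them by a $G_i$-invariant partition of unity subordinate to $(\pi_i(V_i))$ into global orbifold vector fields $v^{(1)},\dots,v^{(\dim S)}$ lifting the coordinate frame of $S$. Because $f$ is proper their flows are complete, and because $S$ is contractible (star-shaped) the associated time-one composite flow maps the central fibre $X$ homeomorphically onto every fibre and assembles into a homeomorphism $h:\mathscr{X}\rightarrow X\times S$ commuting with the projections to $S$. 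The decisive point here is the $G_i$-invariance of the chosen vector fields, which ensures that the flows descend through $\pi_i$ from the charts $V_i$ to the quotients and hence glue to a well-defined flow on $\mathscr{X}$. Equivalently, one may invoke the covering homotopy property directly: the local triviality just established makes $f$ a topological fibre bundle, and a fibre bundle over a paracompact contractible base is trivial.

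Finally I would deduce the sheaf statement from $h$. Since $h$ commutes with the projections to $S$, the higher direct image $\DR^{i}f_{*}\mathbb{K}$ is identified with $\DR^{i}p_{*}\mathbb{K}$ for the projection $p:X\times S\rightarrow S$. As $X$ is compact the map $p$ is proper, and the K\"unneth formula (equivalently proper base change) yields a canonical isomorphism $\DR^{i}p_{*}\mathbb{K}\cong H^{i}(X,\mathbb{K})\otimes_{\mathbb{K}}\mathbb{K}_{S}$, whose right-hand side is the constant sheaf on $S$ with stalk $H^{i}(X,\mathbb{K})$. Hence $\DR^{i}f_{*}\mathbb{K}$ is constant for $\mathbb{K}=\R$ or $\C$.

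The main obstacle is precisely the globalization step in the singular orbifold category: one must make sure that the local analytic product structures of Proposition \ref{triv} patch to a genuine topological fibre bundle and that the trivializing flows are well defined across the singular loci. This is exactly where the $G_i$-invariance of the vector fields (so that the flows descend through the quotient maps $\pi_i$) and the properness of $f$ (so that the flows are complete and $h$ is a homeomorphism rather than merely a continuous bijection) are indispensable; once these are secured, contractibility of $S$ delivers the global product.
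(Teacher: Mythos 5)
Your argument is correct; the paper itself offers no proof of this statement, quoting it directly from Fujiki (Lemma 3.1), and your Ehresmann-type globalization of the local product structure from Proposition \ref{triv} --- $G_i$-invariant lifts of a base frame on the uniformizing charts, patched by a partition of unity and integrated using properness, or equivalently the covering homotopy property for the resulting topological fibre bundle --- is precisely the standard argument behind that reference. The one hypothesis you rightly make explicit is that $S$ is taken to be a contractible (germ) representative, without which $\DR^i f_*\mathbb{K}$ would only be locally constant.
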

\begin{rmk}[\cite{Fujiki}, Remark 3.4]\label{Kuranishi}
Let $X$ be a primitively symplectic orbifold. We can construct $f:\mathscr{X}\rightarrow \Def(X)$ the Kuranishi deformation of $X$ and we have: $T_0 \Def(X)\simeq H^{1,1}(X)$. 
\end{rmk}
\begin{defi}[\cite{Bingener}]
Let $\mathscr{X}\rightarrow S$ be a map of complex spaces. The map $f$ is said \emph{weakly Kähler} if there exists an open cover $(\mathscr{U}_i)$ of $\mathscr{X}$ and a collection of $\mathscr{C}^\infty$ functions $(f_i)$ each defined on $\mathscr{U}_i$ such that for all fibers $\mathscr{X}_x$ of $\mathscr{X}\rightarrow S$ the restrictions to $\mathscr{U}_i\cap\mathscr{X}_x$ of the functions $f_i$ are strictly plurisubharmonic and the restrictions to $\mathscr{U}_i\cap\mathscr{U}_i\cap\mathscr{X}_x$ of the functions $f_i-f_j$ are pluriharmonic.
\end{defi}
The following proposition follows from \cite[Theorem 6.3]{Bingener} and \cite[Proposition B.2.10]{Tim}.
\begin{prop}\label{weakK}
Let $X$ be a compact Kähler orbifold with $\codim \Sing X\geq 3$.
Let $f:\mathscr{X}\rightarrow S$ be a deformation of $X$. Then there exists a neighborhood $U$ of $o:=f(X)$ such that $f:\mathscr{X}_{|U}\rightarrow U$ is weakly Kähler, where $\mathscr{X}_{|U}:=f^{-1}(U)$.
\end{prop}
Now, we can provide the following proposition. 
\begin{prop}\label{defor}
Let $X$ be an irreducible (resp. primitively) symplectic orbifold. Let $f:\mathscr{X}\rightarrow \Def(X)$ be the Kuranishi deformation of $X$. We denote by $o\in \Def(X)$ the point such that $X\simeq f^{-1}(o)$.
There exists a smooth neighborhood $U$ of $o$ in $\Def(X)$ such that for all 
$t\in U$, $\mathscr{X}_t$ is an irreducible (resp. primitively) symplectic orbifold.
\end{prop}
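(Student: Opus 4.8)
The plan is to show that the three defining properties of an irreducible (resp. primitively) symplectic orbifold—being a compact Kähler orbifold with $\codim\Sing X\geq 4$, admitting a nondegenerate holomorphic 2-form with $h^{2,0}=1$, and (in the irreducible case) having simply connected regular locus—are each preserved in a small neighborhood of $o$. The crucial structural input is Proposition \ref{triv}: since $\codim\Sing X\geq 4\geq 3$, the Kuranishi deformation $f:\mathscr{X}\rightarrow\Def(X)$ is of fixed local analytic type, so each fiber $\mathscr{X}_t$ is again an orbifold whose singularities have the same local models as those of $X$. In particular $\codim\Sing\mathscr{X}_t\geq 4$ holds for every $t$, and by Corollary \ref{constantsheaf} we get a homeomorphism $\mathscr{X}\simeq X\times\Def(X)$, from which I would deduce that the regular locus $\mathscr{X}_t^*$ stays homeomorphic to $X^*$ for $t$ near $o$; this immediately propagates simple connectivity of $X^*$ to $\mathscr{X}_t^*$, settling the topological condition needed in the irreducible case.

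Next I would control the Hodge-theoretic conditions. By Corollary \ref{constantsheaf} the sheaves $\DR^if_*\C$ are constant, so $b_2$ and all Betti numbers are locally constant; combined with the pure Hodge structure of Theorem \ref{Hodge} and the upper-semicontinuity of the Hodge numbers $h^{p,q}(\mathscr{X}_t)=\dim H^q(\mathscr{X}_t,\Omega^p_{\mathscr{X}_t})$, I would argue that $h^{2,0}$ is both upper-semicontinuous and constrained by the constancy of $b_2=\sum_{p+q=2}h^{p,q}$, forcing each Hodge number to be constant near $o$. In particular $h^{2,0}(\mathscr{X}_t)=h^{2,0}(X)=1$ for $t$ in a neighborhood $U$, which is exactly the primitively symplectic condition on the size of $H^{2,0}$.

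It remains to produce, for each $t\in U$, an actual nondegenerate holomorphic 2-form on $\mathscr{X}_t^*$. Since $h^{2,0}(\mathscr{X}_t)=1$, there is a unique (up to scalar) holomorphic 2-form $\sigma_t$, and I would let it vary holomorphically over $U$ by taking a nonvanishing section of the rank-one sheaf $R^0f_*\Omega^2_{\mathscr{X}/\Def(X)}$ near $o$, specializing at $o$ to the symplectic form $\sigma_0$ of $X$. Nondegeneracy is an open condition: on the smooth locus, $\sigma_0^{\wedge n}$ is a nowhere-vanishing section of the canonical bundle, and since the fibers are topologically and analytically locally trivial, $\sigma_t^{\wedge n}$ remains nowhere zero on $\mathscr{X}_t^*$ after possibly shrinking $U$. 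This shows $\sigma_t$ is symplectic on $\mathscr{X}_t^*$, completing the verification that $\mathscr{X}_t$ is primitively (resp., together with the simple connectivity already established, irreducible) symplectic.

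The main obstacle I anticipate is the last step: making the nondegeneracy genuinely open on the \emph{noncompact} regular locus $\mathscr{X}_t^*$, where $\sigma_t^{\wedge n}$ could in principle degenerate near the singular strata as $t$ moves. The clean way around this is precisely Proposition \ref{triv}: fixed local analytic type means that near any singular point the deformation is a product, so in a local uniformizing chart the family of invariant 2-forms is just a trivial family over $\Def(X)$, and nondegeneracy of $\sigma_t$ on the regular part of the chart is controlled uniformly by that of $\sigma_0$. Passing to the invariants under the (fixed) local group $G$ and using the Kähler condition to ensure the lifted forms extend, one checks that no degeneration can sneak in near $\Sing\mathscr{X}_t$. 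I would also need to keep the Kähler property, which follows because being Kähler is open in families of compact complex spaces; together these give the desired uniform neighborhood $U$.
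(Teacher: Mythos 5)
Your overall strategy---verifying each defining property fiberwise starting from the fixed local analytic type---is sensible and partially parallels the paper, which likewise uses Proposition \ref{triv} to conclude that every fiber is an orbifold with $\codim\Sing\mathscr{X}_t\geq 4$. But there are two genuine gaps. First, the statement asserts that $U$ may be taken to be a \emph{smooth} neighborhood of $o$ in $\Def(X)$. The Kuranishi space has no a priori reason to be smooth; this is an unobstructedness theorem of Bogomolov--Tian--Todorov type, which the paper imports from Namikawa (\cite[Theorem 2.5]{Nami}). Your proposal never addresses smoothness, and nothing in your argument produces it. Second, you invoke the K\"ahler property of the nearby fibers twice (to have the pure Hodge structure, hence constancy of $h^{2,0}$, and to extend the lifted $2$-forms across the fixed loci), justifying it only by ``being K\"ahler is open in families of compact complex spaces''. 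For smooth families this is Kodaira--Spencer, but for families of singular spaces/orbifolds it is a nontrivial theorem and is precisely one of the points the paper delegates to Kirschner (Theorems 3.3.17 and 3.3.18 of \cite{Tim}); it cannot be taken for granted.

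A smaller but real problem is your treatment of nondegeneracy. Fixed local analytic type trivializes the family of \emph{spaces} over a chart, not the family of $2$-forms: the restriction of $\sigma_t$ to a chart genuinely depends on $t$, so the claim that ``the family of invariant $2$-forms is just a trivial family over $\Def(X)$'' is false, and openness of nondegeneracy on the noncompact regular locus is not controlled in the way you describe (the neighborhood of $o$ on which nondegeneracy holds could shrink as one approaches $\Sing\mathscr{X}_t$). The standard escape, which is what the paper's citation of \cite[Lemma 4.2]{Fujiki} encapsulates, is global rather than local: $\sigma_t^{\wedge n}$ is a section of the canonical sheaf, whose first Chern class is locally constant in $t$ and vanishes at $t=o$, and an effective divisor with trivial class on a compact K\"ahler orbifold must be zero, so $\sigma_t^{\wedge n}$ is nowhere vanishing on the regular locus. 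With that replacement, and with the two missing inputs above supplied, your argument would close; as it stands it does not prove the statement.
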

\begin{proof}
From Proposition \ref{triv}, $f:\mathscr{X}\rightarrow \Def(X)$ is of fixed local analytic type. Hence for all $t\in \Def(X)$, $\mathscr{X}_t$ is an orbifold with 
$\codim\Sing \mathscr{X}_t\geq4$. Moreover, it follows from \cite[Lemma 4.2]{Fujiki} that $\mathscr{X}_t$ is irreducible (resp. primitively) symplectic for all $t\in \Def(X)$. By Proposition \ref{weakK}, there exists a neighborhood $U$ of $o$ such that for all $t\in U$, $\mathscr{X}_t$ is Kähler.
Then we conclude with \cite[Theorem 3.3.18]{Tim} that $U$ is smooth (see also \cite[Theorem 2.5]{Nami}).
\end{proof}
\subsection{Local Torelli theorem and Fujiki formula}\label{TorelliSection}
We give a proof of the local Torelli theorem and the Fujiki relation in the framework of primitively symplectic orbifolds. 
I tried to provide the simplest possible proof with a full statement of the properties of the Beauville--Bogomolov form. In particular, we can remark that proving the local Torelli theorem and the Fujiki formula together brings some simplifications. This proof can be seen more as a survey of several techniques since none of its ideas are really new; they can be found in Fujiki \cite[Section 3]{Fujiki}, Beauville \cite[Theorem 5]{Beauville}, Bogomolov \cite[Lemma 1.9]{Bogo2}, Matsushita \cite[Proposition 4.1]{Mat} and Kirschner \cite[Section 3.4]{Tim}. For a more general framework see \cite{Nanikawa} and \cite[Section 3.4 and 3.5]{Tim}.

Let $X$ be a primitively symplectic orbifold of dimension $2n$.
Let $\sigma\in H^0(X,\Omega_X^2)$ with $\int_X(\sigma\overline{\sigma})^{n}=1$. Using de Rham theorem, we can define a quadratic form on $H^{2}(X,\mathbb{C})$:
$$q_X(\alpha):=\frac{n}{2}\int_X(\sigma\overline{\sigma})^{n-1}\alpha^2+(1-n)\left(\int_X\sigma^{n-1}\overline{\sigma}^{n}\alpha\right)\cdot\left(\int_X\sigma^{n}\overline{\sigma}^{n-1}\alpha\right).$$
Let $f: \mathscr{X}\rightarrow U$ be the deformation of $X$ from Proposition \ref{defor}, where $U$ is an open subset of $\Def(X)$ containing $o$ and such that all fibers are primitively symplectic orbifolds. 
By Corollary \ref{constantsheaf}, we have a canonical isomorphism which commutes with the cup-product: $u_{s}:H^{*}(X_{s},\mathbb{C})\rightarrow H^{*}(X,\mathbb{C})$ for any $s\in U$. Then we can define the \emph{period map}:
$$
\xymatrix@R0pt{
p:U\ar[r]& \mathbb{P}(H^{2}(X,\C))\\
\ \ \ \ s\ar[r] &u_{s}(\sigma_{s}),
}$$
where $\sigma_{s}$ is the symplectic holomorphic 2-form on $X_{s}$.
\begin{thm}\label{LocalTorelli}
Let $X$ be a primitively symplectic orbifold of dimension $2n$.

\hspace{-0.5cm}$\bullet$\textbf{Fujiki Formula:}

\hspace{-0.5cm}There exists a unique indivisible bilinear integral symmetric form $B_{X}\in S^{2}(H^2(X,\Z)/\tors)^{\vee}$ and a unique positive constant $c_{X}\in \mathbb{Q}^*_+$, such that: 
\begin{itemize}
\item[(i)]
for any $\alpha\in H^2(X,\mathbb{C})$
$$\int_X\alpha^{2n}=c_{X}B_{X}(\alpha,\alpha)^n,$$
\item[(ii)]
and for $0\neq \sigma\in H^{0}(\Omega_{X}^{2})$
$$B_{X}(\sigma+\overline{\sigma},\sigma+\overline{\sigma})>0.$$
\end{itemize}
Moreover the signature of $B_{X}$ is $(3,b_2(X)-3)$.

\hspace{-0.5cm}$\bullet$\textbf{Local Torelli theorem:}

\hspace{-0.5cm}Let $\mathcal{D}:=\left\{\left.x\in\mathbb{P}(H^{2}(X,\C))\right|\ B_X(x,x)=0\ \text{and}\ B_X(x,\overline{x})>0\right\}.$
The period map $p:U\rightarrow \mathcal{D}$ is a local isomorphism.
\end{thm}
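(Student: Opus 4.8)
The plan is to analyse the quadratic form $q_X$ written down just before the statement and to show that, up to a positive rational scalar, it is the Fujiki form $B_X$, extracting the local Torelli statement along the way. I would first evaluate $q_X$ on the three pieces of the Hodge decomposition $H^2(X,\mathbb{C})=\mathbb{C}\sigma\oplus H^{1,1}(X)\oplus\mathbb{C}\bar\sigma$ (Theorem \ref{Hodge}), using only that $\sigma^{n+1}=\bar\sigma^{n+1}=0$ for type reasons and the normalisation $\int_X\sigma^n\bar\sigma^n=1$. A short computation gives $q_X(\sigma)=q_X(\bar\sigma)=0$, $q_X(\sigma,\bar\sigma)=\tfrac12$, and $q_X(H^{1,1}(X),\mathbb{C}\sigma\oplus\mathbb{C}\bar\sigma)=0$; in particular the Hodge decomposition is $q_X$-orthogonal, property (ii) is immediate since $q_X(\sigma+\bar\sigma,\sigma+\bar\sigma)=1>0$, and on the real plane $\langle\Ree\sigma,\Ima\sigma\rangle$ the form is positive definite of signature $(2,0)$. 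On $H^{1,1}(X)$ there remains $q_X(\beta)=\tfrac n2\int_X(\sigma\bar\sigma)^{n-1}\beta^2$.

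The decisive point is then to control the signature of this residual form on $H^{1,1}(X)_{\mathbb{R}}$, for which I would prove it is $(1,b_2(X)-3)$, so that $q_X$ has signature $(3,b_2(X)-3)$ and is nondegenerate. This is the main obstacle. The natural tool is the Hodge--Riemann relation (Proposition \ref{Hodge--Riemann}), but that is polarised by a power $\omega^{2n-2}$ of a K\"ahler class, whereas here the polarisation is the $(2n-2,2n-2)$-class $(\sigma\bar\sigma)^{n-1}$. To bridge this gap I would invoke the Ricci-flat (hyperk\"ahler) metric attached to a K\"ahler class by Campana's theorem (Theorem \ref{Ricci}): it supplies complex structures in quaternionic relation for which $\sigma=\omega_J+i\omega_K$, so that $\sigma\bar\sigma=\omega_J^2+\omega_K^2$ is a sum of squares of K\"ahler classes. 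Reducing the relevant integrals to the pointwise symplectic picture expresses them through $Sp$-invariants of $\beta$, and Hodge--Riemann for $\omega_J,\omega_K$ then yields both nondegeneracy and the claimed signature.

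With $q_X$ nondegenerate in hand I would establish the Fujiki relation. Expanding $\int_X(\lambda\sigma+\mu\bar\sigma+\beta)^{2n}$ and using $\sigma^{n+1}=\bar\sigma^{n+1}=0$, only the terms $\sigma^a\bar\sigma^a\beta^{2(n-a)}$ survive; matching them with the expansion of $q_X(\lambda\sigma+\mu\bar\sigma+\beta)^n=(\lambda\mu+q_X(\beta))^n$ reduces everything to the factorisation $\int_X\sigma^a\bar\sigma^a\beta^{2(n-a)}=C_{n,a}\bigl(\int_X(\sigma\bar\sigma)^{n-1}\beta^2\bigr)^{n-a}$, again read off the pointwise symplectic structure of the previous paragraph, and yields $\int_X\alpha^{2n}=\binom{2n}{n}q_X(\alpha)^n$. (A simplification available here, and the reason for proving both results together, is that the local Torelli statement below already shows $\alpha\mapsto\int_X\alpha^{2n}$ vanishes on the period image, which is open in the quadric $\{q_X=0\}$, hence is divisible by $q_X$ by analytic continuation.) For integrality I would use that $\int_X\alpha^{2n}$ is rational on $H^2(X,\mathbb{Q})$ by the rational Poincar\'e duality established above: the radical of the principal ideal $(\int_X\alpha^{2n})=(q_X^n)$ equals $(q_X)$ and, being the radical of a $\mathbb{Q}$-defined ideal, is generated by a rational quadratic form, so $q_X=\gamma B_X$ with $B_X$ integral. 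Declaring $B_X$ indivisible and fixing its sign by (ii) makes $B_X$ unique, and positivity of $\int_X\omega^{2n}$ for a K\"ahler class $\omega$ forces $c_X=\binom{2n}{n}\gamma^n\in\mathbb{Q}^*_+$, likewise unique.

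Finally, for the local Torelli theorem I would compute $dp_o$. By Remark \ref{Kuranishi} the base is smooth with $T_o\Def(X)\simeq H^{1,1}(X)$, and $\dim\Def(X)=h^{1,1}(X)=b_2(X)-2=\dim\mathcal{D}$. Writing $T_{[\sigma]}\mathbb{P}(H^2(X,\mathbb{C}))=\Hom(\mathbb{C}\sigma,H^2(X,\mathbb{C})/\mathbb{C}\sigma)$, Griffiths transversality identifies $dp_o$ with the Kodaira--Spencer map followed by contraction $\kappa\mapsto\kappa\lrcorner\sigma$; since $\sigma$ is nondegenerate this contraction $H^1(X,T_X)\to H^{1,1}(X)$ is an isomorphism, and $H^{1,1}(X)\cap\mathbb{C}\sigma=0$, so $dp_o$ is injective. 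As $q_X(\sigma,\cdot)$ vanishes exactly on $\mathbb{C}\sigma\oplus H^{1,1}(X)$, the tangent space $T_{[\sigma]}\mathcal{D}$ is $H^{1,1}(X)$, and $dp_o$ is an isomorphism onto it; the equality of dimensions then shows $p$ is a local isomorphism onto $\mathcal{D}$. That the image lies in $\mathcal{D}$ is clear, since $q_X(\sigma_s)=0$ and $q_X(\sigma_s,\bar\sigma_s)>0$ hold on every fibre by the first paragraph and are transported by the flat identifications $u_s$ of Corollary \ref{constantsheaf}. The genuine difficulty of the whole argument is thus concentrated in the middle steps, namely in passing from the K\"ahler-polarised Hodge--Riemann relations to the symplectic polarisation $(\sigma\bar\sigma)^{n-1}$.
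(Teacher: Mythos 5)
Your outer structure (the computation of $q_X$ on the Hodge pieces, property (ii), the differential of the period map, and the integrality/uniqueness discussion) is sound and matches the paper's, but the two steps you yourself flag as the heart of the matter do not go through as described, and this is where the proposal has a genuine gap. The factorisation $\int_X\sigma^a\bar\sigma^a\beta^{2(n-a)}=C_{n,a}\bigl(\int_X(\sigma\bar\sigma)^{n-1}\beta^2\bigr)^{n-a}$ cannot be ``read off the pointwise symplectic picture'': each side is the integral of a different $Sp(n)$-invariant polynomial of the harmonic representative of $\beta$, and an integral of one invariant is not a power of the integral of another by any pointwise identity --- the statement is a global cohomological one, essentially equivalent to the Fujiki relation you are trying to prove. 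Likewise the signature of $\beta\mapsto\int_X(\sigma\bar\sigma)^{n-1}\beta^2$ on $H^{1,1}(X)_{\R}$ does not reduce to Hodge--Riemann for $\omega_J,\omega_K$ pointwise, since an $I$-type-$(1,1)$ class is in general not of type $(1,1)$ for $J$ or $K$; making that route work requires the $SU(2)$-action on harmonic forms and representation theory in the style of Verbitsky, none of which is available in this paper. Note also that the quaternionic structure you invoke is only granted (Proposition \ref{hypersymplec}) for \emph{irreducible} symplectic orbifolds, whereas the theorem is stated for primitively symplectic ones.

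Your parenthetical remark contains the right idea but stops one step short: knowing that $\alpha\mapsto\int_X\alpha^{2n}$ vanishes on the period image, which is open in the irreducible quadric $Q=\{q_X=0\}$, only yields $\int_X\alpha^{2n}=q_X\cdot R$ with $\deg R=2n-2$, not $R=Cq_X^{n-1}$. The paper closes this by also running the deformation argument for $W:=\{\alpha\mid\alpha^{2n}=0\}$ (giving $Q\subset W$) and then invoking Bogomolov's lemma to get $W\subset Q$, so the two polynomials have the same zero set and $\int_X\alpha^{2n}=Cq_X(\alpha)^n$ follows from irreducibility of $Q$ by degree count. Once the Fujiki relation is in hand, the signature comes essentially for free: the polarised formula converts $\int_X\alpha^2\omega^{2n-2}$ into $\frac{C}{2n}q_X(\alpha)q_X(\omega)^{n-1}$ and $\int_X\alpha\omega^{2n-1}$ into $Cq_X(\alpha,\omega)q_X(\omega)^{n-1}$, so $q_X(\alpha,\omega)=0$ makes $\alpha$ primitive and Proposition \ref{Hodge--Riemann} gives $q_X(\alpha)\le 0$ with equality only for $\alpha=0$. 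You should reorganise the argument in that order --- Fujiki relation first, signature second --- rather than attempt the hyperk\"ahler computation.
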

\begin{defi}
The bilinear form $B_X$ of Theorem \ref{LocalTorelli}, is called the Beauville--Bogomolov form of $X$.
\end{defi}
This section is dedicated to the proof of this theorem. 
We denote by $Q$ the quadric on $\mathbb{P}(H^{2}(X,\mathbb{C}))$ defined by $q_X$. The form $B_X$ will be the bilinear form associated to the quadratic form $q_X$ after multiplication by a scalar such that $B_X$ is integral and primitive.
\subsubsection*{Step 1: $p:U\rightarrow Q$ is a local isomorphism}
The foundation stone was provided by Fujiki in \cite{Fujiki} where he shows that $p:U\rightarrow \mathbb{P}(H^{2}(X,\C))$ is a local embedding studying its differential \cite[Lemma 4.3]{Fujiki}. 

Now, we follow the argument of Beauville in the beginning of the proof of \cite[Theorem 5]{Beauville}.
Let $\alpha\in H^{2}(X,\mathbb{C})$; we write $\alpha= a\sigma+\omega+b\overline{\sigma}$, with $\omega\in H^{1,1}(X)$ and $a,b\in \mathbb{C}$. We have
\begin{equation}
q_X(\alpha)=ab+\frac{n}{2}\int_X(\sigma\overline{\sigma})^{n-1}\omega^2.
\label{cacapuant}
\end{equation}
On the other hand, we calcule the component of type $(2n,2)$ of $\alpha^{n+1}$. We find
$$(\alpha^{n+1})_{2n,2}=(n+1)(a\sigma)^{n}\cdot b\overline{\sigma}+ {n+1\choose 2}(a\sigma)^{n-1}\omega^{2},$$
then 
\begin{align*}
\int_X\alpha^{n+1}\overline{\sigma}^{n-1}
&=(n+1)a^{n-1}\left(ab+\frac{n}{2}\int_X(\sigma\overline{\sigma})^{n-1}\omega^2\right)\\
&=(n+1)q(\alpha)\left(\int_X\alpha\sigma^{n-1}\overline{\sigma}^{n}\right)^{n-1}.
\end{align*}
Let $\sigma_{s}$ be the holomorphic 2-form on $\mathscr{X}_s$ with $\int_{\mathscr{X}_s}(\sigma_{s}\overline{\sigma_{s}})^{n}=1$, we also denote $\sigma_{o}=\sigma$.
Let $\alpha_{s}=u_{s}(\sigma_{s})$ for $s\in U$. We have 
\begin{equation}
\alpha_{s}^{n+1}=0, 
\label{InTorelli}
\end{equation}
because $\sigma_{s}$ is of type $(2,0)$ on $\mathscr{X}_{s}$. For $s$ near enough to $0$, we have $\int_X\alpha_{s}\sigma^{n-1}\overline{\sigma}^{n}\neq 0$, so $q(\alpha_{s})=0$. Then this equality remains true on all $U$. 
So $p(U)\subset Q$. So with a dimension argument (cf. Remark \ref{Kuranishi}), we conclude that $p:U\rightarrow Q$ is a local isomorphism.
\subsubsection*{Step 2: there exists $C\in\Q_+^*$ such that  $\alpha^{2n}=Cq(\alpha)^n$ for all $\alpha\in H^{2}(X,\C)$}
Let $\omega\in H^{1,1}(X)$ be a Kähler class. The class $\sigma^{n-1}$ is of type $(2n-2,0)$ so in particular is primitive. 
It follows from (\ref{cacapuant}) and Proposition \ref{Hodge--Riemann} that: 
\begin{equation}
q_X(\omega)=\frac{n}{2}\int_X(\sigma\overline{\sigma})^{n-1}\omega^2>0.
\label{Kpositive}
\end{equation}
It follows that $Q$ has rank at least 3 and so is irreducible. 
Let denote $W:=\left\{\alpha\in H^{2}(X,\mathbb{C}) | \alpha^{2n}=0\right\}$. From (\ref{InTorelli}), we have $p:U\rightarrow W$, as before it is a local embedding and because of a dimension argument, it is a local isomorphism.
Since $Q$ is irreducible, it follows that $Q\subset W$.
Actually, we can prove that $Q=W$ using the following argument from Bogomolov \cite[Lemma 1.9]{Bogo2}.
\begin{lemme}
Let $\alpha\in H^{2}(X,\mathbb{C})$ with $\int_X\alpha^{2n}=0$. Then $\alpha\in Q$.
\end{lemme}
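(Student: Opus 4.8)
The plan is to prove the much stronger Fujiki relation $\int_X\alpha^{2n}=c\,q_X(\alpha)^n$ for a nonzero constant $c$, from which the statement is immediate: if $\int_X\alpha^{2n}=0$, then $q_X(\alpha)^n=0$, hence $q_X(\alpha)=0$ and $\alpha\in Q$. The geometric input I would use is that on the period locus the class $\alpha$ satisfies $\alpha^{n+1}=0$; the strategy is to spread this vanishing from $p(U)$ to the whole quadric $Q$ by a Zariski density argument, and then to convert it into a high order of vanishing of the polynomial $\alpha\mapsto\int_X\alpha^{2n}$ along $Q$.

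First I would record that for every period point $\alpha=u_s(\sigma_s)\in p(U)$ one has $\alpha^{n+1}=u_s(\sigma_s^{n+1})=0$: indeed $\sigma_s$ is of type $(2,0)$ on the $2n$-dimensional orbifold $\mathscr{X}_s$, so $\sigma_s^{n+1}$ is of type $(2n+2,0)$ and vanishes, while $u_s$ is compatible with cup products (Corollary \ref{constantsheaf}). This is exactly (\ref{InTorelli}). Since $p:U\rightarrow Q$ is a local isomorphism, $p(U)$ is a nonempty open subset, in the analytic topology, of the irreducible quadric $Q$, hence it is Zariski dense in $Q$. The map $\alpha\mapsto\alpha^{n+1}\in H^{2n+2}(X,\C)$ is polynomial and homogeneous, so its vanishing on the dense set $p(U)$ forces $\alpha^{n+1}=0$ for every $\alpha$ in the affine cone $\widehat{Q}:=\{\,\alpha\in H^2(X,\C)\mid q_X(\alpha)=0\,\}$.

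Next I would exploit this vanishing. For $\alpha\in\widehat{Q}$ and $0\le k\le n-1$ we get $\alpha^{2n-k}=\alpha^{n+1}\cdot\alpha^{\,n-1-k}=0$; since the $k$-th derivative of $P(\alpha):=\int_X\alpha^{2n}$ in directions $v_1,\dots,v_k$ is a nonzero multiple of $\int_X\alpha^{2n-k}\,v_1\cdots v_k$, every partial derivative of $P$ of order at most $n-1$ vanishes identically on $\widehat{Q}$. Thus $P$ vanishes to order at least $n$ along the hypersurface defined by $q_X$. Because $q_X$ has rank at least $3$ it is irreducible, and a polynomial vanishing to order $\ge n$ along an irreducible hypersurface is divisible by the $n$-th power of its defining equation; comparing degrees ($\deg P=2n=n\deg q_X$) yields $P=c\,q_X^n$. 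The constant is nonzero: evaluating at a Kähler class $\omega$ gives $\int_X\omega^{2n}=c\,q_X(\omega)^n$ with $\int_X\omega^{2n}>0$ (the volume) and $q_X(\omega)>0$ by (\ref{Kpositive}). This proves the claim.

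I expect the only delicate point to be the passage from \emph{``$P$ vanishes to order $\ge n$ along $Q$''} to the divisibility $q_X^n\mid P$: this rests on $q_X$ being irreducible, so that the order of vanishing along $Q$ is the $q_X$-adic valuation on the polynomial ring, together with the fact that at a general smooth point of $Q$ this valuation equals the order of vanishing of $P$ transverse to $Q$. The Zariski density of $p(U)$ in $Q$ is the other step to state carefully, but it is standard once $Q$ is known to be irreducible and $p(U)$ open in it.
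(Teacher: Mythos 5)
Your argument is correct, but it follows a genuinely different route from the paper's. The paper proves the lemma by Bogomolov's plane-section trick: given $\alpha$ with $\int_X\alpha^{2n}=0$, one picks a generic $l$ with $\int_X l^{2n}\neq 0$, so that the plane $\langle l,\alpha\rangle$ meets the affine cone over $Q$ in two distinct lines spanned by $x,z$; since $x^{n+1}=z^{n+1}=0$, one computes $(ax+bz)^{2n}=\binom{2n}{n}a^nb^nx^nz^n$ with $x^nz^n\neq0$ (because $l^{2n}\neq0$), which forces $\alpha$ to lie on one of the two lines, hence in $Q$. The Fujiki relation is then deduced only afterwards, from the resulting set-theoretic equality $W=Q$ together with a degree count. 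You instead go directly for the divisibility $q_X^n\mid P$, where $P(\alpha)=\int_X\alpha^{2n}$, via the order of vanishing of $P$ along the quadric, and obtain the lemma and the Fujiki relation in one stroke. Both arguments rest on exactly the same geometric inputs, which you state correctly: $\beta^{n+1}=0$ for every $\beta$ in the affine cone over $Q$ (by (\ref{InTorelli}) and Zariski density of $p(U)$ in the irreducible quadric $Q$ --- a step the paper also uses, implicitly, when it asserts $x^{n+1}=z^{n+1}=0$ for the two generic lines), and irreducibility of $q_X$ coming from (\ref{Kpositive}), i.e.\ rank at least $3$. The paper's route is more elementary --- pure multilinear algebra in a two-dimensional subspace, with no commutative algebra --- while yours is conceptually cleaner: it produces the exponent $n$ in $P=c\,q_X^n$ directly from the valuation along $V(q_X)$ rather than from a degree comparison after identifying zero sets, at the modest price of the standard fact that vanishing to order $\geq n$ along an irreducible hypersurface is equivalent to $q_X$-adic valuation $\geq n$, which you correctly flag as the delicate point and justify by working at a smooth point of the quadric.
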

\begin{proof}
Assume that $\int_{X}\alpha^{2n}=0$; let $l\in H^{2}(X,\mathbb{C})$, $\int_{X}l^{2n}\neq0$ and $l$ generic. The latter means that the plane $V$ generated by $l$, and $\alpha$ intersects the quadric cone $Q$ in $H^{2}(X,\mathbb{C})$ in two lines generated by $x,z\in H^{2}(X,\mathbb{C})$ respectively. The vectors $x,z$ also generate $V$.

We have $x^{n+1}=z^{n+1}=0$ and therefore $(ax+bz)^{2n}={2n\choose n}a^nb^n(x^nz^n)$, $a,b\in \mathbb{C}$. Since $l^{2n}\neq 0$ we conclude that $x^nz^n\neq0$ and $(ax+bz)^{2n}=0$ only if either $a=0$ or $b=0$. Hence $\alpha\in Q$.

\end{proof}
Since $q_X^n$ and $\int_{X}\alpha^{2n}$ have the same degree, there exists a constant $C$ such that: 
\begin{equation}
\int_X\alpha^{2n}=Cq_X(\alpha)^n,
\label{FujikiEq}
\end{equation}
for all $\alpha\in H^2(X,\C)$. Moreover, applying (\ref{FujikiEq}) to $\sigma+\overline{\sigma}$, we find that $C$ is a positive rational number.
The Fujiki formula (\ref{FujikiEq}) can be written in its polarized form:
\begin{equation}
\int_X\alpha_{1}\cdot...\cdot\alpha_{2n}=\frac{C}{(2n)!}\sum_{\sigma\in S_{2n}}q_{X}(\alpha_{\sigma(1)},\alpha_{\sigma(2)})...q_{X}(\alpha_{\sigma(2n-1)},\alpha_{\sigma(2n)}),
\label{pola}
\end{equation}
for all $\alpha_{i}\in H^{2}(X,\C)$.
\subsubsection*{Step 3: Proof of the local Torelli theorem}
For all $\alpha\in p(U)$, we have $\int_X\alpha^n\overline{\alpha}^n>0$. From (\ref{pola}), we obtain that $q_X(\alpha+\overline{\alpha})\neq0$ for all $\alpha\in p(U)$. Since $q_X(\sigma+\overline{\sigma})>0$, we get that $p(U)\subset \mathcal{D}$. So $p:U\rightarrow \mathcal{D}$ is a local isomorphism.
\subsubsection*{Step 4: Proof of the properties of $B_X$}
Now, we prove that the signature of $q_X$ is $(3,b_2(X)-3)$. 
It remains to show that for all $\alpha\in H^{1,1}_{\mathbb{R}}(X):=H^{1,1}(X)\cap H^2(X,\R)$ such that $q_X(\alpha,\omega_X)=0$, we have 
\begin{equation}
q_X(\alpha)\leq0
\label{Negative}
\end{equation}
with equality if and only if $\alpha=0$. 
From (\ref{pola}), we have:
\begin{equation}
\int_X\alpha^2\omega_X^{2n-2}=\frac{C}{2n}q_X(\alpha,\alpha)q_X(\omega_X,\omega_X)^{n-1},
\label{FujikiInter}
\end{equation}
$$\int_X\alpha\omega_X^{2n-1}=Cq_X(\alpha,\omega_X)q_X(\omega_X,\omega_X)^{n-1}=0.$$
Hence $\alpha\in H^{1,1}(X)_p$ and
by Proposition \ref{Hodge--Riemann}, we know that $\int_X\alpha^2\omega_X^{2n-2}\leq0$ with equality if and only if $\alpha=0$. So (\ref{Negative}) follows form (\ref{Kpositive}) and (\ref{FujikiInter}).

It remains to prove that there exists a constant $t>0$ such that $tq_X$ is integral and primitive on $H^2(X,\Z)$. It is enough to show that there exists a constant $t>0$ such that $tq_X$ is rational on $H^2(X,\Q)$. Let $\lambda\in H^2(X,\Q)$ such that $q_X(\lambda)>0$.
For all $\alpha\in H^2(X,\Q)$ such that $q_X(\lambda,\alpha)=0$, we have by (\ref{pola}): 
$$\int_X\lambda^{2(n-1)}\alpha^2=\frac{C}{2n}q_X(\lambda)^{(n-1)}q_X(\alpha).$$
Since $\int_X\lambda^{2(n-1)}\alpha^2\in \Q$ and $\frac{C}{2n}\in\Q$, taking $t=q_X(\lambda)^{(n-1)}$ solves the problem.

To show the uniqueness, we take another form and another constant $B'_{X}, c'_{X}$ with the same properties.
Since $B'_{X}$ and $B_{X}$ are integral indivisible and $c_{X},c'_{X}\in\mathbb{Q}^{+}$, we have $B'_{X}=\pm(\frac{c_{X}}{c'_{X}})^{\frac{1}{n}}B_{X}$ with necessarily $(\frac{c_{X}}{c'_{X}})^{\frac{1}{n}}=1$. Finally (ii) implies $B'_{X}=B_{X}$.
\begin{rmk}\label{kählerpositive}
Note that we have also seen in this proof that for $\omega_X$ a Kähler class, we have $B_X(\omega_X,\omega_X)>0$ and for $\alpha\in H^{1,1}_{\R}(X)$ such that $B_X(\alpha,\omega_X)=0$, we have $B_X(\alpha,\alpha)\leq 0$.
\end{rmk}
\subsection{Construction of the moduli space}\label{modu}
We are now ready to construct the moduli space of marked primitively symplectic orbifolds; let $X$ be such an orbifold.
Let $\Lambda$ be a nondegenerate lattice of signature $(3,b-3)$ with $b\geq3$.
The group $H^{2}(X,\Z)$, endowed with the bilinear Beauville--Bogomolov form $B_{X}$, constitutes a lattice.
Assume that $H^{2}(X,\Z)$ is isometric to $\Lambda$.
An isometry $\varphi: H^{2}(X,\Z) \rightarrow \Lambda$ is called a \emph{marking} of $X$ and $(X,\varphi)$ is called a \emph{marked primitively symplectic orbifold}. Two marked primitively symplectic orbifolds are said isomorphic $(X,\varphi)\thicksim(X',\varphi')$ if and only if there exists an isomorphism $g:X\simeq X'$ such that $g^{*}=\varphi^{-1}\circ \varphi'$. We denote by $\mathscr{M}_{\Lambda}$ the set of isomorphism classes of marked primitively symplectic orbifolds $(X,\varphi)$ with $\varphi:H^2(X,\Z)\rightarrow\Lambda$.

As in the smooth case, the local Torelli theorem endows $\mathscr{M}_{\Lambda}$ with a structure of complex manifold. 
\begin{cor}
The local Torelli Theorem allows to endow $\mathscr{M}_{\Lambda}$ with a structure of non-separated complex manifold (the period maps are the coordinate charts). Moreover, the period maps can be glued to a global holomorphic map on all $\mathscr{M}_{\Lambda}$: 
\begin{align*}
\mathscr{P}:\ & \mathscr{M}_{\Lambda}\rightarrow \mathcal{D}=\mathbb{P}\left(\left\{\sigma\in \Lambda\otimes\mathbb{C} |\ \sigma^{2}=0,\ (\sigma+\overline{\sigma})^{2}>0\right\}\right)\subset \mathbb{P}(\Lambda\otimes\mathbb{C})\\
& (X,\varphi)\mapsto \varphi(H^{2,0}(X)),
\end{align*}
which is a local isomorphism.
\end{cor}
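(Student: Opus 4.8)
The plan is to cover $\mathscr{M}_{\Lambda}$ by charts coming from Kuranishi families and, via the local period map, to transport the complex structure of $\mathcal{D}$ onto each chart. First I would check that $\mathscr{P}$ is well defined on equivalence classes. If $(X,\varphi)\sim(X',\varphi')$ via an isomorphism $g:X\simeq X'$ with $g^{*}=\pm(\varphi^{-1}\circ\varphi')$, then $g^{*}$ is an isomorphism of Hodge structures, so $g^{*}(H^{2,0}(X'))=H^{2,0}(X)$ and hence $\varphi(H^{2,0}(X))=\pm\,\varphi'(H^{2,0}(X'))$; the sign is irrelevant in $\mathbb{P}(\Lambda\otimes\C)$, so $\mathscr{P}(X,\varphi)=\mathscr{P}(X',\varphi')$ and $\mathscr{P}$ descends to $\mathscr{M}_{\Lambda}$. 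Next, fixing a point $(X,\varphi)$, I would take the Kuranishi family $f:\mathscr{X}\rightarrow\Def(X)$ together with the smooth neighborhood $U$ of $o$ provided by Proposition \ref{defor}, over which every fibre $\mathscr{X}_{t}$ is again primitively symplectic. By Corollary \ref{constantsheaf} the homeomorphism $h:\mathscr{X}\simeq X\times U$ furnishes comparison isometries $u_{t}:H^{2}(\mathscr{X}_{t},\Z)\rightarrow H^{2}(X,\Z)$, so that $\varphi_{t}:=\varphi\circ u_{t}$ is a marking of $\mathscr{X}_{t}$. This produces a set map $\kappa:U\rightarrow\mathscr{M}_{\Lambda}$, $t\mapsto(\mathscr{X}_{t},\varphi_{t})$, whose composition with $\mathscr{P}$ is exactly $\varphi\circ p$, where $p$ is the local period map of Section \ref{TorelliSection} and $\varphi$ is read as the induced isometry of period domains.

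By the local Torelli theorem (Theorem \ref{LocalTorelli}), $p:U\rightarrow\mathcal{D}$ is a local isomorphism; shrinking $U$, I may assume $\mathscr{P}\circ\kappa=\varphi\circ p$ is a biholomorphism onto an open subset of $\mathcal{D}$. In particular $\kappa$ is injective, and I would declare each $\kappa(U)$ to be a coordinate chart with coordinate map $\mathscr{P}$ itself. With this choice $\mathscr{P}$ restricts on every chart to a biholomorphism onto an open subset of $\mathcal{D}$, so once chart compatibility is established, $\mathscr{P}$ is automatically holomorphic and a local isomorphism. The transition map between two overlapping charts $\kappa_{1},\kappa_{2}$ is then, by the factorisation above, $\kappa_{2}^{-1}\circ\kappa_{1}=(\varphi_{2}\circ p_{2})^{-1}\circ(\varphi_{1}\circ p_{1})$, a composition of biholomorphisms between open subsets of $\mathcal{D}$, hence holomorphic.

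The main obstacle is to justify that these transition maps are genuinely well defined: namely, that a whole neighborhood of a common point $m=\kappa_{1}(t_{1})=\kappa_{2}(t_{2})$ inside $\kappa_{1}(U_{1})$ actually lies in $\kappa_{2}(U_{2})$ and is matched fibrewise, so that $\kappa_{2}^{-1}\circ\kappa_{1}$ is single valued. Writing $Y$ for the orbifold underlying $m$, both families restrict near $m$ to deformations of $Y$, and the crucial point is that the Kuranishi family of $Y$ is \emph{universal}, not merely versal. For an irreducible symplectic orbifold this follows from the absence of infinitesimal automorphisms: since $Y^{*}$ is simply connected, $H^{1,0}(Y)=0$, and the symplectic form gives $H^{0}(Y,T_{Y})\simeq H^{0}(Y,\Omega_{Y}^{1})=0$ (for the general primitively symplectic case one appeals to the deformation theory of symplectic complex spaces in \cite{Tim}). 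Universality identifies the germs $(U_{1},t_{1})$ and $(U_{2},t_{2})$ with $(\Def(Y),o)$ compatibly with the markings, and since both period maps are local isomorphisms of the same dimension (Remark \ref{Kuranishi} gives $\dim\Def(Y)=h^{1,1}(Y)=b-2=\dim\mathcal{D}$), the induced identification is precisely $p_{2}^{-1}\circ p_{1}$. This yields the well-definedness and holomorphy of the transition maps, completing the construction of $\mathscr{M}_{\Lambda}$ as a complex manifold on which $\mathscr{P}$ is a holomorphic local isomorphism; no separatedness is asserted, in accordance with the analysis of non-separated points in Proposition \ref{separated}.
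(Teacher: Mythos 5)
Your construction is correct and is exactly the standard smooth-case argument that the paper invokes without giving any detail (``As in the smooth case, the local Torelli theorem endows $\mathscr{M}_{\Lambda}$ with a structure of complex manifold''): charts from marked Kuranishi families, local Torelli to make the period map a local biholomorphism, and universality of the Kuranishi family (via $H^0(Y,T_Y)\simeq H^0(Y,\Omega_Y^1)=0$) to obtain well-defined holomorphic transition maps. Your explicit treatment of that last gluing step is the one point the paper leaves entirely implicit, and you handle it correctly.
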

It is a natural question to ask what are the non-separated points in $\mathscr{M}_{\Lambda}$? Generalizing Huybrechts' ideas \cite[Theorem 4.3]{Huybrechts5}, it is not too hard to show that non-separated points correspond to bimeromorphic orbifolds. 
\begin{prop}\label{separated}
Let $(X,\varphi)$ and $(X',\varphi')$ be two non-separated distinct points in $\mathscr{M}_{\Lambda}$. Then $X$ and $X'$ are bimeromorphic and $\mathscr{P}(X,\varphi)=\mathscr{P}(X',\varphi')$ is contained in $\mathcal{D}\cap \alpha^\bot$ for some $0\neq\alpha\in\Lambda$. 
\end{prop}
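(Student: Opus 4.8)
The plan is to adapt Huybrechts' argument \cite[Theorem 4.3]{Huybrechts5}: realise the bimeromorphic map as a limit of genuine isomorphisms between nearby fibres, then extract the hyperplane from the Lefschetz $(1,1)$ theorem.

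First I would check that the two periods coincide and set up the approximating isomorphisms. As a (non-separated) complex manifold, $\mathscr{M}_\Lambda$ is locally Euclidean and first countable, so two inseparable points are joined by a common approximating sequence: there is $(X_n,\varphi_n)$ converging both to $(X,\varphi)$ and to $(X',\varphi')$. Continuity of $\mathscr{P}$ and Hausdorffness of $\mathcal{D}$ give $\mathscr{P}(X,\varphi)=\mathscr{P}(X',\varphi')=:p_0$, whence $\psi:=\varphi'^{-1}\circ\varphi$ is a lattice isometry sending $H^{2,0}(X)$ to $H^{2,0}(X')$, i.e. a Hodge isometry. By local Torelli (Theorem \ref{LocalTorelli}) the period map is a chart near each point, so for $n\gg0$ I may write $(X_n,\varphi_n)=(\mathscr{X}_{t_n},\varphi_{t_n})$ with $t_n\to o$ in $\Def(X)$ and also as $(\mathscr{X}'_{s_n},\varphi'_{s_n})$ with $s_n\to o'$ in $\Def(X')$, the markings being parallel transports of $\varphi,\varphi'$ along the locally constant sheaf $R^2 f_*\Z$ (Corollary \ref{constantsheaf}). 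Equality of the points then yields isomorphisms $h_n:\mathscr{X}_{t_n}\xrightarrow{\sim}\mathscr{X}'_{s_n}$ with $h_n^*=\pm\psi$ under these identifications.

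The core step is to take the limit of the graphs $\Gamma_{h_n}\subset\mathscr{X}_{t_n}\times\mathscr{X}'_{s_n}$. Choosing K\"ahler forms $\omega_{t_n}\to\omega$ and $\omega'_{s_n}\to\omega'$, the volume of $\Gamma_{h_n}$ with respect to $\pr_1^*\omega_{t_n}+\pr_2^*\omega'_{s_n}$ is computed from the Fujiki relation applied to $\omega_{t_n}+h_n^*\omega'_{s_n}$; since $h_n^*=\pm\psi$ is cohomologically constant, these volumes stay bounded. Working in the relative cycle space over $\Def(X)\times\Def(X')$, which is proper, a subsequence of $[\Gamma_{h_n}]$ converges to a cycle $Z\subset X\times X'$ whose class equals the constant class of $\psi$. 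Hence $Z$ carries a component $\Gamma$ projecting with degree one onto each factor, and $\Gamma$ is the graph of a bimeromorphic map $f:X\dashrightarrow X'$. I expect this limiting step to be the main obstacle: one must set up cycle spaces and volume estimates for (singular) K\"ahler complex spaces through a degeneration, and argue that the limit cycle keeps a degree-one component instead of collapsing into purely fibral pieces.

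Finally I would obtain the hyperplane by contradiction. If $p_0\notin\alpha^\perp$ for every $0\neq\alpha\in\Lambda$, then an integral $(1,1)$-class is exactly an integral vector orthogonal to $p_0$, so the Lefschetz $(1,1)$ theorem (Proposition \ref{Lefschetz11}) forces $\NS(X)=0$, and likewise $\NS(X')=0$. But $X$ and $X'$ are normal and $\Q$-factorial (Proposition \ref{MildSingu}) with vanishing Picard group, so $f$ can neither contract nor extract a divisor and admits no nontrivial flop; thus $f$ is an isomorphism $g$ with $g^*=\pm\psi$, forcing $(X,\varphi)=(X',\varphi')$ and contradicting distinctness. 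Therefore $p_0\in\mathcal{D}\cap\alpha^\perp$ for some $0\neq\alpha\in\Lambda$, namely the $\varphi$-image of a divisor class detected by the indeterminacy of $f$.
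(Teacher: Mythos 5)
Your overall strategy --- approximate by isomorphisms of nearby fibres, take a limit of graphs, and extract the hyperplane via the Lefschetz $(1,1)$ theorem --- is the same as the paper's, but two essential steps are missing. The first is the limiting step, which you yourself flag as the main obstacle and then leave unresolved: you propose to run the cycle-space compactness argument directly on the singular fibres. The paper circumvents this entirely by first constructing a simultaneous projective resolution of the two families (Lemma \ref{resolution}), checking that the resolved fibres are K\"ahler manifolds carrying a unique holomorphic $2$-form nondegenerate on a dense open set, running Huybrechts' degeneration-of-graphs argument verbatim on these smooth resolutions to get a limit cycle $\widetilde\Gamma=\widetilde Z+\sum\widetilde Y_k$, and only then pushing down by $r_0\times r_0'$ to obtain $\Gamma=Z+\sum Y_k$ in $X\times X'$. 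Without this reduction (or an equivalent substitute) your core step is not established.

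The second gap is in your final step. From $\NS(X)=\NS(X')=0$ you conclude that $f$ ``can neither contract nor extract a divisor and admits no nontrivial flop; thus $f$ is an isomorphism.'' A bimeromorphic map between normal $\Q$-factorial spaces can fail to be an isomorphism while modifying only loci of codimension $\geq 2$ (a Mukai flop does exactly this), so ``no contracted divisor'' does not yield an isomorphism, and ``admits no nontrivial flop'' is precisely the assertion that needs proof. The paper's actual argument is different: (i) if $Z$ is not the graph of an isomorphism, a Hironaka--Moishezon argument shows that some positive-dimensional fibre of $p'\colon Z\to X'$ contains a curve, whence a nonzero class in $H^{1,1}(X)\cap H^2(X,\Q)$ by Proposition \ref{typepp}, Theorem \ref{Hodge}~(ii) and Poincar\'e duality; (ii) if $Z$ is the graph of an isomorphism, one uses that the action of the \emph{whole} limit cycle $[\Gamma]_*=[Z]_*+\sum[Y_k]_*$ on $\Lambda$ is the identity (being the limit of the marking-compatible $[\Gamma_{g_i}]_*$), so either some $Y_k$ dominates a divisor --- producing the integral $(1,1)$-class --- or $[Z]_*$ is marking-compatible and $(X,\varphi)=(X',\varphi')$, a contradiction. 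Your proposal keeps no track of the lower-dimensional components $Y_k$, so even your assertion that the limit isomorphism satisfies $g^*=\pm\psi$ is unjustified: the class of the limit cycle, not of its graph component alone, is what equals $\pm\psi$.
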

\begin{lemme}\label{resolution}
Let $X$ be a compact orbifold with $\codim \Sing X\geq 4$.
Let $f:\mathscr{X}\rightarrow S$ be a deformation of $X$ with $S$ smooth.
After possibly shrinking $S$, there exists a resolution $r:\widetilde{\mathscr{X}}\rightarrow \mathscr{X}$ given by a finite sequence of blow-ups such that for all $s\in S$, $r_{|\widetilde{f}^{-1}(s)}:\widetilde{\mathscr{X}}_s\rightarrow \mathscr{X}_s$ is a resolution of $\mathscr{X}_s$, where $\widetilde{f}:=f\circ r$, $\widetilde{\mathscr{X}}_s$ is the fiber of $\widetilde{f}$ over $s$ and $\mathscr{X}_s$ the fiber of $f$ over $s$.
\end{lemme}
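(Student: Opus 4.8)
The strategy is to reduce to a local product situation via the fixed local analytic type of $f$, and then to resolve $\mathscr{X}$ by a \emph{functorial} resolution, whose compatibility with smooth morphisms forces the resolution to be fiberwise.

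Since $X$ is (primitively) symplectic we have $\codim\Sing X\geq 4$, so Proposition \ref{triv} applies and $f$ is of fixed local analytic type: every point of $\mathscr{X}$ admits a neighborhood $\mathscr{U}$ isomorphic over $f(\mathscr{U})$ to $(\mathscr{U}\cap f^{-1}(s))\times f(\mathscr{U})$, with $f|_{\mathscr{U}}$ the second projection. In the deformations relevant here the base is smooth, so I would write such a chart as $\mathscr{U}\cong Z\times T$ with $Z$ open in a fiber-slice and $T\subset S$ open and smooth, $f|_{\mathscr{U}}$ being the projection onto $T$; in particular $\Sing\mathscr{U}\cong\Sing Z\times T$ and $\mathscr{X}$ is again an orbifold.

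First I would apply the functorial resolution of singularities of Bierstone--Milman \cite{BM} to $\mathscr{X}$; since it is produced by a locally finite sequence of blow-ups along smooth centres it yields a projective morphism $r:\widetilde{\mathscr{X}}\to\mathscr{X}$. The key input is that this resolution \emph{commutes with smooth morphisms}, i.e.\ for a smooth $g:V\to W$ one has $\mathrm{Res}(V)\cong\mathrm{Res}(W)\times_W V$. Applying this to the smooth projection $\pi:Z\times T\to Z$ shows that over $\mathscr{U}$ the resolution reads $\mathrm{Res}(Z)\times T$ and $\widetilde f:=f\circ r$ is the projection to $T$. Hence for $s\in T$ the fiber $\widetilde{\mathscr{X}}_s$ is locally $\mathrm{Res}(Z)$, it is smooth, and $r|_{\widetilde{\mathscr{X}}_s}:\mathrm{Res}(Z)\to Z$ is a resolution of the fiber-slice.

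Finally, by functoriality of the resolution under (analytic-local) isomorphisms these local models agree on overlaps, so they glue to the global $r:\widetilde{\mathscr{X}}\to\mathscr{X}$, and $r|_{\widetilde f^{-1}(s)}:\widetilde{\mathscr{X}}_s\to\mathscr{X}_s$ is everywhere locally a functorial resolution of a fiber-slice, hence a (projective) resolution of $\mathscr{X}_s$, which is exactly the assertion. The main obstacle is precisely this commutation of the resolution with the smooth projection $Z\times T\to Z$, together with its validity in the analytic category and the canonical gluing of the local resolutions; all of these rest on the functoriality of the Bierstone--Milman algorithm, while Proposition \ref{triv} supplies the indispensable product structure --- without fixed local analytic type the singularities could move with $s$ and a resolution of $\mathscr{X}$ would in general fail to restrict to resolutions of the fibers.
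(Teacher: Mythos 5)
Your proposal is correct and follows essentially the same route as the paper: it invokes Proposition \ref{triv} to obtain the local product structure $\mathscr{U}\cong\mathscr{U}_x\times f(\mathscr{U})$ and then uses the universality/functoriality of the Bierstone--Milman resolution \cite[Theorem 13.2]{BM} to identify $r^{-1}(\mathscr{U})$ with $r_x^{-1}(\mathscr{U}_x)\times f(\mathscr{U})$, which gives the fiberwise statement. Your added remark that this functoriality is precisely compatibility with the smooth projection is a useful clarification of the step the paper leaves implicit, but it is the same argument.
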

\begin{proof}
Bierstone and Milman in \cite{BM} provide an algorithm to produce a resolution by a finite sequence of blow-ups in smooth centers. When the complex space $X$ to desingularize is embedded in a smooth manifold $M$, they introduce, in \cite[Section 1]{BM}, an invariant $\inv_X^e:X \rightarrow \Sigma$, where $\Sigma$ is a partially ordered set. This invariant allows to decide which locus to blow up in each steps of the process of desingularization. In \cite[Section 13]{BM}, Bierstone and Milman remark that the invariant $\inv_X^e$ is independent of the embedding $X\subset M$. 
This allows them to define $\inv_X^e$ when $X$ is not embedded and to state \cite[Theorem 13.2]{BM} which provides a desingularization of $X$ (not embedded) by a finite sequence of blow-ups in smooth centers. Moreover this desingularization process has a universal property that can be expressed as follows.
Let $X$ and $Y$ be two complex spaces with their sequences of blow-ups as provided by \cite[Theorem 13.2]{BM}: 
$$\xymatrix@R5pt{ \cdots\ar[r]&X_{k+1}\ar[r]^{r_{k+1}}&\ar[r]X_{k}&\cdots\ar[r]&X_1\ar[r]^{r_1}&X_0=X,\\
\cdots\ar[r]&Y_{k+1}\ar[r]^{\rho_{k+1}}&\ar[r]Y_{k}&\cdots\ar[r]&Y_1\ar[r]^{\rho_1}&Y_0=Y.
}$$
Let $U\subset X$ and $V\subset Y$ be two open sets endowed with an isomorphism $\varphi:U\simeq V$. Then $\varphi$ lifts to isomorphisms throughout the entire desingularization towers:
$$\xymatrix@R10pt{ \cdots\ar[r]&r_{k+1}^{-1}(U)\eq[d]\ar[r]^{r_{k+1}}&\ar[r]r_k^{-1}(U)\eq[d]&\cdots\ar[r]&r_1^{-1}(U)\eq[d]\ar[r]^{r_1}&U\eq[d],\\
\cdots\ar[r]&\rho_{k+1}^{-1}(V)\ar[r]^{\rho_{k+1}}&\ar[r]\rho_k^{-1}(V)&\cdots\ar[r]&\rho_1^{-1}(V)\ar[r]^{\rho_1}&V.
}$$
Now, we apply \cite[Theorem 13.2]{BM} to our case. 
After possibly shrinking $S$, we can assume that it is relatively compact.
Then by Lemma \ref{triv}, there exists a finite open cover $(\mathscr{V}_i)_{i\in I}$ of $\mathscr{X}$ such that for all $i\in I$, $\mathscr{V}_i$ is isomorphic over $f(\mathscr{V}_i)$ to $\left(\mathscr{X}_{s_i}\cap \mathscr{V}_{i}\right)\times f(\mathscr{V}_i)$ for $s_i\in f(\mathscr{V}_i)$.
For simplicity in the notation, we denote $\mathscr{V}_{i,s_i}:=\mathscr{V}_i\cap \mathscr{X}_{s_i}$. Let 
$$\xymatrix@R5pt{ \cdots\ar[r]&\mathscr{V}_{i,s_i}^{k+1}\ar[r]^{r_{k+1}}&\ar[r]\mathscr{V}_{i,s_i}^{k}&\cdots\ar[r]&\mathscr{V}_{i,s_i}^1\ar[r]^{r_1}&\mathscr{V}_{i,s_i}\\
}$$
be the sequence of blow-ups given by \cite[Theorem 13.2]{BM} for each $\mathscr{V}_{i,s_i}$ (after a finite number of blow-ups $k_i$ we obtain a desingularization of $\mathscr{V}_{i,s_i}$ and for $k>k_i$ the maps $r_k$ are isomorphisms). Since $S$ is smooth, the universal property of \cite[Theorem 13.2]{BM} allows to glue all the $\mathscr{V}_{i,s_i}^1\times f(\mathscr{V}_i)$ together to provide a blow-up $\mathscr{X}_1\rightarrow \mathscr{X}$. This gluing can be repeated in each step of the desingularization to obtain a sequence:
$$\xymatrix@R5pt{ \cdots\ar[r]&\mathscr{X}_{k+1}\ar[r]^{r_{k+1}}&\ar[r]\mathscr{X}_{k}&\cdots\ar[r]&\mathscr{X}_1\ar[r]^{r_1}&\mathscr{X},\\
}$$
which leads to a desingularization of $\mathscr{X}$ with the desired properties.
\end{proof}
\begin{proof}[Proof of Proposition \ref{separated}]
As explained in \cite[Lemma 4.1]{Huybrechts5}, using the local Torelli theorem (Theorem \ref{LocalTorelli}), we can find 1-dimensional deformations $\mathscr{X}\rightarrow S$ and $\mathscr{X}'\rightarrow S$ of $\mathscr{X}_0\simeq X$ and $\mathscr{X}'_0\simeq X'$ respectively such that there exists a non-empty open subset $V\subset S$ with $0\in\partial V$ and $\mathscr{X}_{|V}\simeq \mathscr{X}'_{|V}$. 
From Lemma \ref{resolution}, there exist resolutions of the deformations $r:\widetilde{\mathscr{X}}\rightarrow \mathscr{X}$ and $r':\widetilde{\mathscr{X}'}\rightarrow \mathscr{X}'$. Moreover, as explained in the previous proof, the resolution provided by \cite[Theorem 13.2]{BM} has a universal property. That means that we still have $\widetilde{\mathscr{X}}_{|V}\simeq \widetilde{\mathscr{X}'}_{|V}$ with a commutative diagram:
\begin{equation}
\xymatrix{\widetilde{\mathscr{X}}_{|V}\ar[d]^{r}\eq[r]&\ar[d]^{r'}\widetilde{\mathscr{X}'}_{|V}\\
\mathscr{X}_{|V}\eq[r]&\mathscr{X}'_{|V}.
}
\label{Comudefor}
\end{equation}
Hence $\widetilde{\mathscr{X}}_0\simeq \widetilde{X}$ and $\widetilde{\mathscr{X}'}_0\simeq \widetilde{X'}$ are ''non-separated manifolds''. We are going to prove that $\widetilde{X}$ and $\widetilde{X'}$ are Kähler manifolds endowed with a unique holomorphic 2-form nondegenerate on a dense open subset. 

From Lemma \ref{resolution}, the resolutions $r$ and $r'$ are given by finite sequence of blow-ups. So from Lemma \ref{Kähler}, $\widetilde{X}$ and $\widetilde{X'}$ are Kähler manifolds. Moreover, $\widetilde{X}$ and $\widetilde{X'}$ can be endowed with a holomorphic 2-form 
by pulling back the holomorphic 2-forms on $X$ and $X'$. By \cite[Section 1.7 d)]{Fujiki}, these holomorphic 2-forms are unique on $\widetilde{X}$ and $\widetilde{X'}$ respectively. 

Knowing that $\widetilde{X}$ and $\widetilde{X'}$ are Kähler manifolds endowed with a unique holomorphic 2-form nondegenerate on a dense open subset, we can apply the technique of the proof of \cite[Theorem 4.3]{Huybrechts5} to $\widetilde{X}$ and $\widetilde{X'}$. Let $t_i$ be a sequence in $V$ converging to $0$. As in \cite[Theorem 4.3]{Huybrechts5}, the graphs $\widetilde{\Gamma}_{i}$ of the isomorphism $\widetilde{g_i}:\widetilde{\mathscr{X}}_{t_i}\rightarrow \widetilde{\mathscr{X}'}_{t_i}$ will converge to a cycle $$\widetilde{\Gamma}=\widetilde{Z}+\sum \widetilde{Y}_k,$$ where the component $\widetilde{Z}$ defines a bimeromorphic correspondence $\widetilde{g}:\widetilde{X}\dashrightarrow \widetilde{X}'$ and the components $\widetilde{Y}_k$ do not dominate neither of the two factors. 

Now, we consider $$\Gamma:=r_0\times r_0'(\widetilde{\Gamma})=r_0\times r_0'(\widetilde{Z})+\sum r_0\times r_0'(\widetilde{Y}_k).$$ 
For simplicity, we denote $Z:=r_0\times r_0'(\widetilde{Z})$ and $Y_k:=r_0\times r_0'(\widetilde{Y}_k)$ and we obtain:
\begin{equation}
\Gamma=Z+\sum Y_k.
\label{graphe}
\end{equation}
By Diagram (\ref{Comudefor}), $\Gamma$ is also the degeneration of the graphs of the isomorphisms $g_i:\mathscr{X}_{t_i}\rightarrow \mathscr{X}'_{t_i}$.
Moreover, we still have that the components $Y_k$ dominate neither $X$ nor $X'$ and $Z$ defines the graph a bimeromorphic map $g:X\dashrightarrow X'$ such that the following diagram commutes:
\begin{equation}
\xymatrix{\widetilde{X}\ar[d]^{r_0}\ar@{.>}[r]^{\widetilde{g}}&\ar[d]^{r_0'}\widetilde{X'}\\
X\ar@{.>}[r]^g&X'.
}
\end{equation}

We have proved that $X$ and $X'$ are bimeromorphic; now we are going to prove that $\mathscr{P}(X,\varphi)=\mathscr{P}(X',\varphi')\in\mathcal{D}\cap \alpha^\bot$ for some $0\neq\alpha\in\Lambda$. This is equivalent, by definition of the period map, to proving that $H^{1,1}(X)\cap H^{2}(X,\Z)\neq 0$ and also equivalent to having $\NS (X)$ non-trivial by Proposition \ref{Lefschetz11}. We adapt the proof of \cite[Proposition 4.7]{Huybrechts6}. We denote by $p:\Gamma\rightarrow X$ and $p':\Gamma\rightarrow X'$ the projections. 

For the first case, assume that $Z$ is not the graph of an isomorphism. In this case, $p':Z\rightarrow X'$ contracts some spaces. We can find $x\in X'$ such that $V_x=p'^{-1}(x)$ is not a point. By Proposition \ref{MildSingu}, an orbifold is always normal, hence $\dim V_x\geq 1$. We are going to prove that $V_x$ contains a curve. The projection $p'$ defines a bimeromorphic map, we can also consider: $p'^{-1}:X'\dashrightarrow Z$. Moreover by Hironaka's theorems, we can find a sequence of blow-ups $\widehat{r}:\widehat{X'}\rightarrow X'$ such that $\widehat{r}\circ p'^{-1}$ extends to a holomorphic map: $h:\widehat{X'}\rightarrow Z$. The cycle $E_x:= \widehat{r}^{-1}(x)$ is a projective variety. Since $h$ provides a holomorphic map: $h:E_x\rightarrow V_x$, by \cite[Theorem 2]{Moishezon}, $V_x=h(E_x)$ is Moishezon. 
So $V_x$ contains a curve. That means that $X$ contains a curve. Thus by Proposition \ref{typepp}, $H^{2n-1,2n-1}(X)\cap H^{2n-2}(X,\Q)\neq 0$. It follows by Theorem \ref{Hodge} (ii) and Corollary \ref{poincaréQ} that $H^{1,1}(X)\cap H^{2}(X,\Q)\neq 0$ and so $H^{1,1}(X)\cap H^{2}(X,\Z)\neq 0$.

For the second case, assume that $Z$ is the graph of an isomorphism. 
We consider the action of $[\Gamma]_*=[Z]_*+\sum [Y_k]_*$ on $\Lambda$ (via the given markings $\varphi$ and $\varphi'$, that is $x\mapsto\varphi'(p'_*([\Gamma]\cdot p^*\varphi^{-1}(x)))$).
 If the images of all the $Y_k$ are of codimension $\geq2$ in $X$ and $X'$, then $\left[Z\right]_*=\left[ \Gamma\right]_*$ on $\Lambda$ because $\varphi'(p'_*([Y_k]\cdot p^*\varphi^{-1}(x)))=0$ for all $x\in \Lambda$ and all $k$. Moreover by diagram (\ref{Comudefor}), $\Gamma$ is the degeneration of the graphs of the isomorphisms $g_i:\mathscr{X}_{t_i}\rightarrow \mathscr{X}'_{t_i}$; hence
$\left[\Gamma\right]_*=\left[\Gamma_{g_i}\right]_*$. Since $g_i$ is compatible with the markings, the action of $\left[\Gamma_{g_i}\right]_*$ is the identity. It follows that $Z$ is the graph of an isomorphism $X\simeq X'$ which is compatible with the markings $\varphi$ and $\varphi'$. That means $(X,\varphi)=(X',\varphi')$. Since $(X,\varphi)$ and $(X',\varphi')$ were assumed to be distinct, this case can be excluded. Hence one of the $Y_k$ maps onto a divisor in $X$ or in $X'$. So $H^{1,1}(X)\cap H^{2}(X,\Z)\neq 0$ or equivalently $H^{1,1}(X')\cap H^{2}(X',\Z)\neq 0$. This finishes the proof. 
\end{proof}
\begin{rmk}\label{nonsepgraph}
Let $(X,\varphi)$ and $(X',\varphi')$ be two non-separated distinct points in $\mathscr{M}_{\Lambda}$.
We have proved that $X$ and $X'$ are bimeromorphic. Actually, we have proved a bit more. From (\ref{graphe}), we have seen that we can find a cycle in $X\times X'$:
$$\Gamma=Z+\sum Y_k,$$
where $Z$ defines a bimeromorphic correspondence between $X$ and $X'$, the components $Y_k$ dominate neither $X$ nor $X'$ and $p'_*([\Gamma]\cdot p^*\alpha)=\varphi'^{-1}\circ\varphi(\alpha)$ for all $\alpha\in H^{2}(X,\Z)$.
\end{rmk}
From Proposition \ref{separated}, we can construct, with exactly the same proof as Huybrechts in \cite[Section 4.3]{Huybrechts6}, a Hausdorff reduction of $\mathscr{M}_{\Lambda}$.
\begin{cor}\label{Hausdorff}
The period map $\mathscr{P}:\mathscr{M}_{\Lambda}\rightarrow \mathcal{D}$ factorizes through the 'Hausdorff reduction' $\overline{\mathscr{M}_{\Lambda}}$ of $\mathscr{M}_{\Lambda}$. More precisely, there exists a complex Hausdorff manifold $\overline{\mathscr{M}_{\Lambda}}$ and a locally biholomorphic map factorizing the period map:
$$\mathscr{P}: \mathscr{M}_{\Lambda}\twoheadrightarrow \overline{\mathscr{M}_{\Lambda}}\rightarrow \mathcal{D},$$
such that $x=(X,\varphi), y=(X',\varphi')\in \mathscr{M}_{\Lambda}$ map to the same point in $\overline{\mathscr{M}_{\Lambda}}$ if and only if they are inseparable points of $\mathscr{M}_{\Lambda}$.
\end{cor}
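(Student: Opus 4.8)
The plan is to follow Huybrechts \cite[Section 4.3]{Huybrechts6} essentially verbatim, the only geometric inputs being that $\mathscr{P}$ is a local isomorphism onto the Hausdorff period domain $\mathcal{D}$ (local Torelli, Theorem \ref{LocalTorelli}) and the description of inseparable points in Proposition \ref{separated}. First I would introduce on $\mathscr{M}_{\Lambda}$ the relation declaring $x\sim y$ whenever $x$ and $y$ cannot be separated by disjoint open sets. Reflexivity and symmetry are immediate, and by Proposition \ref{separated} any two distinct inseparable points satisfy $\mathscr{P}(x)=\mathscr{P}(y)=:t\in\mathcal{D}\cap\alpha^{\bot}$ for some $0\neq\alpha\in\Lambda$. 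Once $\sim$ is shown to be an equivalence relation, I would set $\overline{\mathscr{M}_{\Lambda}}:=\mathscr{M}_{\Lambda}/\!\sim$ with the quotient topology, transport the complex charts through $\mathscr{P}$, and check that the induced map to $\mathcal{D}$ is a local biholomorphism; the factorization and the characterization of the fibers of $\pi:\mathscr{M}_{\Lambda}\twoheadrightarrow\overline{\mathscr{M}_{\Lambda}}$ are then built into the construction.

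The heart of the matter, and the step I expect to be the main obstacle, is transitivity of $\sim$. Here I would argue locally: choosing a polydisc $D\subset\mathcal{D}$ around $t$ over which the relevant sheets of $\mathscr{M}_{\Lambda}$ are graphs of local inverses $x(\cdot),y(\cdot),z(\cdot)$ of $\mathscr{P}$, I would study the agreement locus $A_{xy}:=\{s\in D\mid x(s)=y(s)\}$. Since the sections are local inverses of the same local isomorphism, $A_{xy}$ is open. Moreover it is closed in $D\smallsetminus\bigcup_{\alpha\in\Lambda}\alpha^{\bot}$: if $s_i\to s_0\notin\bigcup_\alpha\alpha^{\bot}$ with $x(s_i)=y(s_i)$ but $x(s_0)\neq y(s_0)$, then $x(s_0),y(s_0)$ cannot be separated (disjoint neighbourhoods would eventually contain the equal points $x(s_i)=y(s_i)$), hence are inseparable, which by Proposition \ref{separated} forces $s_0\in\bigcup_\alpha\alpha^{\bot}$, a contradiction. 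Now $\bigcup_{\alpha\in\Lambda}(\alpha^{\bot}\cap D)$ is a countable union of proper analytic hypersurfaces, so its complement is connected; since $x\sim y$ forces $A_{xy}$ to accumulate at $t$, the clopen subset $A_{xy}\cap\bigl(D\smallsetminus\bigcup_\alpha\alpha^{\bot}\bigr)$ is nonempty and therefore equals all of $D\smallsetminus\bigcup_\alpha\alpha^{\bot}$. The same holds for $A_{xz}$, whence $A_{yz}\supseteq A_{xy}\cap A_{xz}\supseteq D\smallsetminus\bigcup_\alpha\alpha^{\bot}$ accumulates at $t$, giving $y\sim z$.

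With $\sim$ an equivalence relation, I would finish by verifying that the quotient is a Hausdorff complex manifold. The projection $\pi$ is open, since the above local analysis shows that the $\sim$-saturation of a chart is again open. For Hausdorffness, given $[x]\neq[y]$, if $\mathscr{P}(x)\neq\mathscr{P}(y)$ one simply pulls back disjoint neighbourhoods from the Hausdorff space $\mathcal{D}$; if $\mathscr{P}(x)=\mathscr{P}(y)=t$ but $x\not\sim y$, then by the same description the two sections disagree on a whole neighbourhood $D'\ni t$, so that $x(D')$ and $y(D')$ are disjoint and, using transitivity, have disjoint images under $\pi$, separating $[x]$ and $[y]$. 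Finally, because inseparable points share a period, $\mathscr{P}$ factors through $\pi$, and the resulting map $\overline{\mathscr{M}_{\Lambda}}\rightarrow\mathcal{D}$ inherits from the local Torelli charts the structure of a locally biholomorphic map; two points of $\mathscr{M}_{\Lambda}$ map to the same point of $\overline{\mathscr{M}_{\Lambda}}$ precisely when they are inseparable, which is the assertion.
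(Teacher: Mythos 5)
Your proposal is correct and follows essentially the same route as the paper, which simply invokes Huybrechts' construction from \cite[Section 4.3]{Huybrechts6} with Proposition \ref{separated} and the local Torelli theorem as the only inputs; you have faithfully reconstructed that argument (transitivity of inseparability via the clopen agreement locus on the complement of the countable union of hyperplanes $\alpha^{\bot}$, openness of the quotient map, and Hausdorffness of the quotient).
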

\section{Projectivity criterion for irreducible symplectic orbifolds}\label{MainSection}
In this section, we prove the projectivity criterion (Theorem \ref{PC}).
The main tool of the proof is the use of currents; in the next section we provide a quick reminder about these objects.
\subsection{Reminder on currents}
Let X be a complex manifold of dimension $n$. We denote by $\mathcal{A}_c^{p,p}(X)$ the space of smooth $(p,p)$-forms with compact support on $X$. Let $\Omega\subset X$ be a coordinate open set. Let $\varphi_i$ be a collection of $(p,p)$ forms on $\Omega$ that gives a basis of the space of $(p,p)$ forms at each point of $\Omega$. Let $E\subset \Omega$ be a relatively compact set and $s\in \mathbb{N}$, for all 
forms written $\phi=\sum f_j(z)\varphi_i\in \mathcal{A}^{p,p}_c(X)$ on $\Omega$, we define the semi-norm:
$$p_{\infty,E}^s(\phi)=\max_j\max_{k\leq s}\left\|f_j^{(k)}\right\|_{\infty,E}.$$
We endow $\mathcal{A}_c^{p,p}(X)$ with the topology defined by all the semi-norms $p_{\infty,E}^s$ when $E$, $s$ and $\Omega$ vary.
A form $\varphi\in \mathcal{A}^{p,p}(X,\R)$ is said to be \emph{positive} if it can be written locally in the form $i\alpha_1\wedge\overline{\alpha}_1\wedge...\wedge i\alpha_{p}\wedge\overline{\alpha}_{p}$, where the $\alpha_{i}$'s are smooth $(1,0)$-forms; if moreover $\varphi$ is nowhere 0, $\varphi$ is said \emph{strictly positive}.

A \emph{$(1,1)$-current} is a continuous linear map $T: \mathcal{A}_c^{n-1,n-1}(X)\rightarrow\C$.
A $(1,1)$-current $T$ is said \emph{positive} if for all positive forms $\varphi\in \mathcal{A}_c^{n-1,n-1}(X,\R)$, we have $T(\varphi)\geq 0$. A current $T$ is said \emph{closed} if $T$ factorizes over $\mathcal{A}_c^{n-1,n-1}(X)/\mathcal{A}_c^{n-1,n-1}(X)\cap\Ima d$. A closed $(1,1)$-current $T$ is said \emph{Kähler} if there exists a strictly positive $(1,1)$-form $\omega$ such that $T-\omega$ is a positive current. Moreover we can define the de Rham cohomology of currents by setting $\dif T=T\circ \dif$. Let us denote by $H^*_{dRc}(X)$ the \emph{de Rham cohomology of currents} on $X$. It is well known that $H^*_{dRc}(X)\simeq H^*_{dR}(X)$. Then we call a current $T$ \emph{integral}, if its de Rham cohomology class is in $H^*(X,\Z)$.
Let us say one word about the \emph{weak compactness} of sets of currents in the following remark. 
\begin{rmk}\label{weakcompactness}
The weak topology on the space of $(1,1)$-currents $\mathcal{A}'_{1,1}(X)$ is the topology defined by the collection of semi-norms $T\rightarrow \left|T(\varphi)\right|$ for $\varphi\in \mathcal{A}_c^{n-1,n-1}(X)$. A set $B\subset \mathcal{A}'_{1,1}(X)$ is said weakly bounded if $T(\varphi)$ is bounded when $T$ runs over $B$, for every fixed $\varphi\in \mathcal{A}_c^{n-1,n-1}(X)$. The theorem of Banach-Alaoglu implies that every weakly bounded closed subset $B\subset \mathcal{A}'_{1,1}(X)$ is weakly compact.
\end{rmk}
The theorem of Banach-Alaoglu will be used in the proof of Lemma \ref{final}. For this purpose, we recall a method to 
check that a set of positive $(1,1)$-currents is weakly bounded. It uses the notion of \emph{trace measure}. 
For simplicity, we assume that $X$ is compact. 
Let $\omega$ be a Kähler form on $X$, the \emph{trace measure} of $T$ is given by $T(\omega^{n-1})$. Let $\Omega$ be a coordinate open set, it is shown in \cite[Section III.1]{Demailly} that there exists a positive constant $C>0$ such that for all positive $(1,1)$ currents $T$ and all $\varphi\in\mathcal{A}_c^{n-1,n-1}(X)$ with support in $\Omega$:
$$\left|T(\varphi)\right|\leq C T(\omega^{n-1})p_{\infty,\Omega}^0(\varphi).$$
We consider a finite covering $(\Omega_i)$ by coordinate open sets and a partition of unity $(f_i)$ associated to $(\Omega_i)$. There exists a positive constant $C'>0$ such that for all positive $(1,1)$ currents $T$ and all $\varphi\in\mathcal{A}^{n-1,n-1}(X)$:
\begin{equation}
\left|T(\varphi)\right|\leq C' T(\omega^{n-1})\sum_ip_{\infty,\Omega_i}^0(f_i\varphi).
\label{currents}
\end{equation}
\begin{rmk}\label{remarkaddedbyreferee}
Let $B\subset \mathcal{A}'_{1,1}(X)$ and $\omega$ be a Kähler form on $X$. We have seen with (\ref{currents}) that it is enough to show that $\left\{\left.T(\omega^{n-1})\right|\ T\in B\right\}$ is bounded to prove that $B$ is weakly bounded.
\end{rmk}

The key result for proving Theorem \ref{PC} is the following. 
\begin{thm}[\cite{JS}, Theorem 1.1]\label{JSth}
Let $X$ be a compact complex manifold. Then the following statements are equivalent:
\begin{itemize}
\item[(1)]
$X$ is Moishezon.
\item[(2)]
There is an integral Kähler current on $X$.
\end{itemize}
\end{thm}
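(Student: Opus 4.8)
The plan is to translate both conditions into the single notion of \emph{bigness} of a line bundle, and then to invoke the classical fact that a compact complex manifold is Moishezon precisely when it carries a big line bundle. A closed positive $(1,1)$-current has cohomology class lying in the real $(1,1)$-part of $H^2(X,\R)$, so an integral K\"ahler current $T$ has class $[T]\in H^{1,1}(X)\cap H^2(X,\Z)$; by the classical Lefschetz $(1,1)$-theorem there is a holomorphic line bundle $L$ with $c_1(L)=[T]$, and conversely every integral $(1,1)$-class is the first Chern class of a line bundle. Thus condition (2) is equivalent to the existence of a line bundle $L$ whose first Chern class is representable by a K\"ahler current, and the theorem reduces to the assertion: for a line bundle $L$ on $X$, the class $c_1(L)$ contains a K\"ahler current if and only if $L$ is big. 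The implication \emph{big} $\Rightarrow$ \emph{Moishezon} (via the Iitaka map of high powers, which is bimeromorphic) and its converse are classical and will be used as a black box.

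For $(1)\Rightarrow(2)$ I would argue as follows. If $X$ is Moishezon it carries a big line bundle $L$. A big line bundle admits a singular Hermitian metric $h$ whose curvature current $\Theta_h(L)$ satisfies $\Theta_h(L)\geq\varepsilon\,\omega$ for some Hermitian form $\omega$ and some $\varepsilon>0$; such a metric is produced, for instance, from the Fubini--Study metric attached to the linear system $|L^{\otimes k}|$ for $k$ large, rescaled by $1/k$. This curvature current is precisely a K\"ahler current, and it is integral because $[\Theta_h(L)]=c_1(L)\in H^2(X,\Z)$. This direction is essentially formal once the equivalence \emph{big} $\Leftrightarrow$ \emph{Moishezon} is granted.

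The substance lies in the reverse implication $(2)\Rightarrow(1)$: from an integral K\"ahler current $T=\Theta_h(L)$ with $T\geq\varepsilon\,\omega$ I must exhibit enough holomorphic sections, namely $h^0(X,L^{\otimes k})\geq c\,k^{n}$ with $n=\dim X$, which forces $L$ to be big. The obstacle, and the hardest part of the whole theorem, is that the singular metric $h$ may have arbitrarily large Lelong numbers, so $L^2$-estimates cannot be applied to it directly. My plan has three steps. First, regularise $T$ by Demailly's approximation theorem, producing currents $T_m\in c_1(L)$ with only \emph{analytic} singularities and still $T_m\geq\delta\,\omega$ for a fixed $\delta>0$, at the cost of an arbitrarily small and controlled loss of positivity. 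Second, resolve these analytic singularities by a modification $\mu\colon\widehat{X}\to X$, so that $\mu^*T_m$ splits as a smooth strictly positive form plus an effective $\R$-divisor supported on the exceptional locus. Third, on $\widehat{X}$ apply H\"ormander's $L^2$-estimates for $\overline{\partial}$ (equivalently Demailly's holomorphic Morse inequalities) to $\mu^*L^{\otimes k}$ twisted by the appropriate adjoint correction, where the fixed strict positivity $\delta\,\omega$ furnishes a curvature term of order $k$ dominating the correction; this yields of order $k^{n}$ sections, which descend to $X$ since $\mu_*\mathcal{O}_{\widehat{X}}=\mathcal{O}_X$. The mass bounds and weak compactness recalled in Remark \ref{weakcompactness} keep the regularisation process bounded throughout. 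I expect steps one through three to be the genuine difficulty: converting the purely cohomological positivity input of a K\"ahler current into an honest asymptotic count of holomorphic sections. Concluding that $L$ is big and hence $X$ Moishezon then closes the equivalence.
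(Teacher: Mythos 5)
You should know at the outset that the paper contains no proof of this statement to compare against: Theorem \ref{JSth} is imported verbatim from Ji--Shiffman \cite{JS} and used as a black box in the proof of the projectivity criterion (Theorem \ref{PC}). So the only question is whether your blind reconstruction is sound. In outline it is: the reduction to bigness, Demailly-type regularization of the K\"ahler current to currents with analytic singularities keeping a uniform lower bound $\delta\omega$, resolution of those singularities by a modification $\mu\colon\widehat{X}\to X$, $L^2$-estimates or holomorphic Morse inequalities upstairs to produce $h^0(X,L^{\otimes k})\geq c\,k^n$, and descent of sections via $\mu_*\mathcal{O}_{\widehat{X}}=\mathcal{O}_X$ is exactly the standard modern proof of the hard implication (2)$\Rightarrow$(1), as in \cite{JS} and in Demailly's notes \cite{Demailly}. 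You also correctly identify where the substance lies (Lelong numbers obstructing direct $L^2$-estimates) and why the fixed positivity $\delta\omega$ survives the rationality and adjoint corrections on $\widehat{X}$.

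Two steps, however, are stated in a way that would fail as written. First, you invoke ``the classical Lefschetz $(1,1)$-theorem'' to produce the line bundle $L$ with $c_1(L)=[T]$; but $X$ is only a compact complex manifold, not known to be K\"ahler (that $X$ is Moishezon is the conclusion, not a hypothesis), so there is no Hodge decomposition and Lefschetz $(1,1)$ is unavailable. The correct substitute is elementary but different: compute the exponential-sequence obstruction $H^2(X,\Z)\to H^2(X,\mathcal{O}_X)$ using Dolbeault cohomology with currents; since $T$ is $d$-closed of pure bidegree $(1,1)$, its $(0,2)$-component vanishes identically, hence the obstruction class is zero and $[T]=c_1(L)$ for some holomorphic line bundle $L$. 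Second, in (1)$\Rightarrow$(2) the metric induced by the Fubini--Study form of $|L^{\otimes k}|$ for a big $L$ has curvature current $\frac{1}{k}\,i\partial\overline{\partial}\log\sum_j|s_j|^2$ which is positive but in general \emph{not} bounded below by $\varepsilon\omega$: it degenerates along the locus where the Iitaka map fails to be immersive or contracts subvarieties, so it need not be a K\"ahler current. The standard repair is to pass to a projective modification $\mu\colon\widetilde{X}\to X$, apply Kodaira's lemma there to write $k\,\mu^*L=A+D$ with $A$ ample and $D$ effective, and push forward: if $\omega_A$ is a K\"ahler form in $c_1(A)$ then $\omega_A\geq\varepsilon\,\mu^*\omega$ for small $\varepsilon>0$ by compactness, whence $\mu_*(\omega_A+[D])$ is an integral current dominating $\varepsilon\omega$, i.e.\ an integral K\"ahler current in $k\,c_1(L)$. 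With these two substitutions your argument closes correctly; the appeal to the weak compactness of Remark \ref{weakcompactness} in the regularization step is not needed, since Demailly's approximation theorem carries its own mass and positivity control.
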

\subsection{Proof of Theorem \ref{PC}}
This section is dedicated to the proof of Theorem \ref{PC}.
Throughout the section $X$ is a primitively symplectic orbifold.

Let $f:\mathscr{X}\rightarrow Def(X)$ be the Kuranishi deformation of $X$. By Proposition \ref{weakK} and Theorem \ref{LocalTorelli}, we can reduce $Def(X)$ to a neighborhood $U$ of $o:=f(X)$ such that $\mathscr{X}_{U}\rightarrow U$ is weakly Kähler and the period map $p:U\rightarrow p(U)$ is an isomorphism by the local Torelli theorem (Theorem \ref{LocalTorelli}). We also allow to possibly shrink $U$ to a smaller neighborhood of $o$ for applying Lemma \ref{resolution} and \ref{Kähler2}.
From now on, to simplify the notation, we drop the subscript $U$ of $\mathscr{X}_{U}$ when considering $\mathscr{X}_{U}\rightarrow U$.

Let $\beta\in H^{2p}(X,\R)$, we consider the set $S_\beta$ of $s\in U$ such that $\beta$ is a cohomology class of type $(p,p)$. 
Then $S_\beta$ is a closed analytic subset of $U$. Indeed if we consider the resolution $r:\widetilde{\mathscr{X}}\rightarrow\mathscr{X}$, from Theorem \ref{Hodge} (iii), $S_\beta=S_{r^*(\beta)}$, where $S_{r^*(\beta)}$ is the set of $s\in U$ such that $r^*(\beta)$ is a cohomology class of type $(p,p)$.
Because of \cite[Theorem 10.9]{Voisin}, $S_{r^*(\beta)}=S_\beta$ is a closed analytic subset of $U$.

Let $A\subset H^*(X,\Z)$ be the set of all integral classes $\beta\in H^{2p}(X,\Z)$ for $p\in\left\{1,...2n\right\}$ such that $S_\beta$ is a proper subset of $U$. Let
$$\mathcal{U}:=U\smallsetminus \cup_{\beta\in A}S_\beta.$$
Since $\mathcal{U}$ is the complement of a countable union of proper closed analytic subsets, $\mathcal{U}$ is dense in $U$. In the following a \emph{very general point} will refer to a point in $\mathcal{U}$.
We denote by $\mathcal{C}_Y$ the \emph{positive cone} of a primitively symplectic orbifold $Y$, it is the connected component of $$\left\{\left.\alpha\in H^{1,1}(Y,\R)\right|\ B_Y(\alpha,\alpha)>0\right\}$$ that contains the Kähler cone. 

\begin{lemme}\label{Dense}
Let $s\in \mathcal{U}$ and $\delta\in H^{2q}(\mathscr{X}_s,\Z)$ with $q$ odd such that $\delta$ is of type $(q,q)$. Then: 
$$\delta\cdot \alpha_z^{2n-q}=0,$$
for all $z\in U$ and all $\alpha_z\in \mathcal{C}_{\mathscr{X}_z}$.
\end{lemme}
\begin{proof}
We fix $s\in \mathcal{U}$ and $\delta\in H^{2q}(\mathscr{X}_s,\Z)$ as in the statement of the lemma.
By definition of $\mathcal{U}$, we have $S_{\delta}=U$. Hence $\delta$ is a class of type $(q,q)$ in all $H^{2q}(\mathscr{X}_z,\C)$ for all $z\in U$. 

We consider the following polynomial $Q_\delta(t,\sigma)=(\sigma+t\overline{\sigma})^{2n-q}\cdot\delta$ with $\sigma$ running on $p(U)$ and $t\in\R$.
Since $q$ is odd and $\delta$ is of type $(q,q)$ in all $H^{2q}(\mathscr{X}_z,\C)$ for all $z\in U$, we have:
\begin{equation}
Q_\delta\equiv0.
\label{Denseequa}
\end{equation}
We assume that there exists $z\in U$ and $\alpha_z\in \mathcal{C}_{\mathscr{X}_z}$ such that 
\begin{equation}
\alpha_{z}^{2n-q}\cdot\delta\neq0,
\label{Kählerequa}
\end{equation}
and we will find a contradiction. We can normalize $\alpha_z$ such that $B_X(\alpha_{z},\alpha_{z})=1$. 
Let $\sigma_{z}$ be a holomorphic 2-form on $\mathscr{X}_z$ such that $B_X(\sigma_z,\overline{\sigma_z})=1$.
Now we consider the class 
$$\sigma_{\epsilon,z}:=\sigma_{z}-\epsilon^2\overline{\sigma_{z}}+\sqrt{2}\epsilon\alpha_{z}.$$
For $\epsilon$ small enough, we have $\sigma_{\epsilon,z}\in p(U)$. 
Now we consider the polynomial in the variables $(t,\epsilon)$: $$R_\delta(t,\epsilon)= Q_\delta(t,\sigma_{\epsilon,z}).$$
We have:
\begin{align*}
R_\delta(\epsilon^2,\epsilon)&=(\sigma_{\epsilon,z}+\epsilon^2\overline{\sigma}_{\epsilon,z})^{2n-q}\cdot\delta\\
&=\left((1-\epsilon^4)\sigma_{z}+\sqrt{2}(\epsilon+\epsilon^3)\alpha_{z}\right)^{2n-q}\cdot\delta\\
&=2^{\frac{2n-q}{2}}(\epsilon+\epsilon^3)^{2n-q}\alpha_{z}^{2n-q}\cdot\delta.
\end{align*}
Hence, by (\ref{Kählerequa}), we have $R_\delta(\epsilon^2,\epsilon)\neq0$ for $\epsilon\neq0$.
This is in contradiction with (\ref{Denseequa}).

\end{proof}
\begin{rmk}
The previous lemma could also have been proved using the twistor space (cf. Section \ref{twisty}).
\end{rmk}

\begin{lemme}\label{FujikiHuybrechts}
If $\beta\in H^{4p}(X,\C)$ is of type $(2p,2p)$ on all small deformations of $X$, then there exists a constant $c_\beta$ depending on $\beta$ such that for all $\alpha\in H^{2}(X,\C)$, one has $\beta\cdot\alpha^{2(n-p)}=c_\beta B_X(\alpha,\alpha)^{n-p}$. 
\end{lemme}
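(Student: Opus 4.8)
The plan is to regard $g(\alpha):=\int_X\beta\cdot\alpha^{2(n-p)}$ and $q_X(\alpha)^{n-p}$ as two homogeneous polynomials of degree $2(n-p)$ on $H^2(X,\C)$ and to prove that they are proportional. If $p=n$ both sides are constants and there is nothing to do, so I assume $p<n$.

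First I would establish that $g$ vanishes on the quadric $Q=\{q_X=0\}$. Let $f:\mathscr{X}\to U$ be the Kuranishi family as in Remark \ref{Kuranishi}, let $s\in U$ and let $\sigma=\sigma_s$ be the corresponding period point. Since $\beta$ stays of type $(2p,2p)$ on $\mathscr{X}_s$ and $\sigma$ is of type $(2,0)$, the class $\beta\cdot\sigma^{2(n-p)}$ is of type $(4n-2p,2p)$, which is nonzero in $H^{4n}(X,\C)\simeq\C$ only when $p=n$; hence $g(\sigma)=0$. Thus $g$ vanishes on the open subset $p(U)$ of $Q$, and since $Q$ is irreducible (it has rank $\geq 3$, as shown in the proof of Theorem \ref{LocalTorelli}), $g$ vanishes on all of $Q$. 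As $q_X$ is an irreducible quadratic polynomial, the Nullstellensatz gives $q_X\mid g$.

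To upgrade this to $g=c_\beta q_X^{n-p}$, I would compute $g$ fibrewise. Fixing $s\in U$, I write $H^2(X,\C)=\C\sigma\oplus H^{1,1}(\mathscr{X}_s)\oplus\C\overline{\sigma}$, and for $\alpha=a\sigma+\omega+b\overline{\sigma}$ I use that $q_X$ is orthogonal to distinct Hodge types, so $q_X(\alpha)=2ab\,q_X(\sigma,\overline{\sigma})+q_X(\omega)$. A type count shows that in $\beta\cdot\alpha^{2(n-p)}$ only the monomials $\sigma^i\overline{\sigma}^i\omega^{2(n-p-i)}$ survive, whence
\[
g(\alpha)=\sum_{i=0}^{n-p}\binom{2(n-p)}{i,\,2(n-p-i),\,i}(ab)^i\int_X\beta\cdot\sigma^i\overline{\sigma}^i\omega^{2(n-p-i)}.
\]
Comparing, in the free variables $a,b$, with the expansion of $q_X(\alpha)^{n-p}=(2ab\,q_X(\sigma,\overline{\sigma})+q_X(\omega))^{n-p}$ reduces the desired identity to a family of equations indexed by $i$; the top one ($i=n-p$) pins down the constant $c_\beta=\binom{2(n-p)}{n-p}\int_X\beta\cdot\sigma^{n-p}\overline{\sigma}^{n-p}\big/(2q_X(\sigma,\overline{\sigma}))^{n-p}$, while the bottom one ($i=0$) is exactly the asserted identity restricted to $H^{1,1}(\mathscr{X}_s)$. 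I would close the argument by induction on the codimension $n-p$: the intermediate equations are instances of the same statement for the classes $\beta\cdot\sigma^i\overline{\sigma}^i$ on the fibre $\mathscr{X}_s$, which I would control using the Hodge--Riemann relations (Proposition \ref{Hodge--Riemann}) and the resulting Hard Lefschetz decomposition of $H^{1,1}(\mathscr{X}_s)$.

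The main obstacle is precisely this last upgrade: vanishing on $Q$ only yields a single factor of $q_X$, so the heart of the proof is to show that $g$ vanishes to order $n-p$ along $Q$, equivalently that the proportionality constant extracted on each plane $\langle\sigma,\overline{\sigma}\rangle$ is independent of the period point. The delicate feature is that the auxiliary classes $\beta\cdot\sigma^i\overline{\sigma}^i$ are fibre-dependent and are of pure type only on $\mathscr{X}_s$, so the induction must be carried out within the Kuranishi family of $\mathscr{X}_s$ rather than for a single fixed global class. Alternatively, one may evaluate on the connected positive cone $\{q_X>0\}\subset H^2(X,\R)$, which is Zariski-dense in $H^2(X,\C)$, and propagate the constant by connectedness.
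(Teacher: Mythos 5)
The first half of your argument --- vanishing of $g(\alpha)=\int_X\beta\cdot\alpha^{2(n-p)}$ on the image of the period map, irreducibility of $Q$, and the Nullstellensatz giving $q_X\mid g$ --- is correct, but it only yields a \emph{single} factor of $q_X$, and the mechanism you propose for extracting the remaining $n-p-1$ factors does not close. Your induction would need the lemma to apply to the auxiliary classes $\beta\cdot\sigma_s^i\overline{\sigma}_s^i$, but these are of pure type only with respect to the Hodge structure of the single fibre $\mathscr{X}_s$: under a deformation of $\mathscr{X}_s$ the class $\sigma_s$ does not remain of type $(2,0)$, so $\beta\cdot\sigma_s^i\overline{\sigma}_s^i$ does not satisfy the hypothesis ``of type $(2p',2p')$ on all small deformations'' and the inductive hypothesis cannot be invoked on it. Hodge--Riemann and Hard Lefschetz do not substitute for this: the equations you need for $1\le i\le n-p-1$ are exactly the full statement of the lemma restricted to $H^{1,1}(\mathscr{X}_s)$, not a consequence of positivity. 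Likewise, ``propagating the constant by connectedness'' presupposes that the constant extracted on each plane $\langle\sigma_s,\overline{\sigma}_s\rangle$ is locally constant in $s$, which is precisely what has to be proved.

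The missing idea --- and the one underlying the proof the paper invokes, namely \cite[Theorem 5.12]{Huybrechts} --- is that the type hypothesis gives much more than $g(\sigma_s)=0$: for every $j\ge n-p+1$ one has $\beta\cdot\sigma_s^{j}=0$ already as a cohomology class, since it would have to be of type $(2p+2j,\,2p)$ with $2p+2j>2n$. Consequently, for any $v\in H^2(X,\C)$,
$$g(\sigma_s+tv)=\sum_{k\geq 0}\binom{2(n-p)}{k}\,t^k\int_X\beta\cdot\sigma_s^{2(n-p)-k}\cdot v^{k}=O\!\left(t^{\,n-p}\right),$$
i.e.\ all partial derivatives of $g$ of order $<n-p$ vanish at every point of $p(U)$. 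These derivatives are polynomials vanishing on a Zariski-dense subset of the irreducible quadric $Q$, so $g$ vanishes to order $n-p$ along $Q$ and $q_X^{\,n-p}$ divides $g$; since both sides have degree $2(n-p)$, this forces $g=c_\beta q_X^{\,n-p}$. With this observation your fibrewise Hodge-theoretic computation becomes unnecessary, although it does correctly identify the constant $c_\beta$.
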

\begin{proof}
In the smooth case, this is a corollary of the Local Torelli theorem.
In our case, the proof of \cite[Theorem 5.12]{Huybrechts} applies word by word using Theorem \ref{LocalTorelli}.
\end{proof}
\begin{lemme}\label{Kähler2}
Let $r:\mathscr{X}_1\rightarrow\mathscr{X}$ be a blow up in a smooth center. We denote by $E$ the exceptional divisor and by $\mathscr{X}_{1,t}$, $t\in U$ the fibers of the composition $\mathscr{X}_1\rightarrow\mathscr{X}\rightarrow U$.
Then 
there exist a family $(\omega_t)_{t\in U}$ of Kähler forms on $(\mathscr{X}_t)_{t\in U}$ depending continuously on $t$ 
and a real number $R_0\in \mathbb{R}_+^*$ such that for all $t\in U$ and all $R\geq R_0$ 
$$R\left[r^*(\omega_t)\right]-\left[E\right]_{|\mathscr{X}_{1,t}}$$ is a Kähler class of $\mathscr{X}_{1,t}$.
\end{lemme}
\begin{proof}
We have chosen $U$ such that $\mathscr{X}\rightarrow U$ is weakly Kähler. This provides a family $(\omega_t)_{t\in U}$ of Kähler forms depending continuously on $t$. Then the lemma can be proved exactly as Lemma \ref{Kähler}.
\end{proof}

Let $r:\widetilde{\mathscr{X}}\rightarrow\mathscr{X}$ be a resolution as in Lemma \ref{resolution}. We denote $\widetilde{f}:=r\circ f$.
Let $(t_k)$ a sequence of points of $U$, to simplify the notation, we denote the fibers: $\mathscr{X}_{k}:=\mathscr{X}_{t_k}=f^{-1}(t_k)$, $\widetilde{\mathscr{X}}_{k}:=\widetilde{\mathscr{X}}_{t_k}=\widetilde{f}^{-1}(t_k)$ and $r_k:=r_{|\widetilde{\mathscr{X}}_{k}}$.
\begin{lemme}\label{kählergene}
Let $(t_k)$ be a sequence of points of $\mathcal{U}$ and $\alpha_k\in \mathcal{C}_{\mathscr{X}_{k}}$ a sequence of classes such that $\left\{\left.B_{\mathscr{X}_{k}}(\alpha_k,\alpha_k)\right|\ k\in\mathbb{N}\right\}$ is bounded from below. Then, there exist an integral linear combination of exceptional divisors of $r$ denoted by $E$ and $R_0\in \mathbb{R}^*_+$ such that for all $R\geq R_0$, and all $k\in\mathbb{N}$, $$R r_{k}^*(\alpha_k)+\left[E\right]_{|\widetilde{\mathscr{X}}_{k}}$$
is a Kähler class.
\end{lemme}
\begin{proof}
The map $r$ is obtained by a finite number of blow-ups. Hence applying Lemma \ref{Kähler2} recursively with integral coefficients, we can find an integral linear combination of exceptional divisors $E$, a family $(\omega_t)_{t\in U}$ of Kähler forms depending continuously on $t$ and a real number $T_0$ such that for all $t\in U$ and all $T\geq T_0$,
$$T\left[r_t^*(\omega_t)\right]+\left[E\right]_{|\widetilde{\mathscr{X}}_{t}}$$ is a Kähler class on $\widetilde{\mathscr{X}}_{t}$.

Let $(t_k)$ be a sequence of points of $\mathcal{U}$. As previously, for simplifying the subscripts, we denote $\omega_k:=\omega_{t_k}$. 
Let $Y_k$ be the class of an analytic subset of $\widetilde{\mathscr{X}}_k$ of dimension $d$ and $T\geq T_0$.
Then: $$Y_k\cdot\left(T\left[r_k^*(\omega_k)\right]+\left[E\right]_{|\widetilde{\mathscr{X}}_{k}}\right)^d>0.$$
That is:
$$\sum_{q=0}^d\binom{d}{q}T^q\left[r_k^*(\omega_k)\right]^q\cdot\left(Y_k\cdot \left[E\right]_{|\widetilde{\mathscr{X}}_{k}}^{d-q}\right)>0.$$
Taking the image by the push forward map $r_{k*}$ and using the projection formula (Remark \ref{projformula}), we obtain:
\begin{equation}
\sum_{q=0}^d\binom{d}{q}T^q\left[\omega_k\right]^q\cdot r_{k*}\left(Y_k\cdot \left[E\right]_{|\widetilde{\mathscr{X}}_{k}}^{d-q}\right)>0.
\label{lastcorrect}
\end{equation}
Since $t_k\in \mathcal{U}$, by Lemma \ref{Dense}, we have:
\begin{equation}
 \left[\omega_k\right]^q\cdot r_{k*}\left(Y_k\cdot \left[E\right]_{|\widetilde{\mathscr{X}}_{k}}^{d-q}\right)=
 \left[\alpha_k\right]^q\cdot r_{k*}\left(Y_k\cdot \left[E\right]_{|\widetilde{\mathscr{X}}_{k}}^{d-q}\right)=0,
 \label{newkey}
\end{equation}
for all $q$ odd.
Now, we consider $q$ even.
Only the intersections of $\left[\omega_k\right]^q$ or $\left[\alpha_k\right]^q$ with classes of type $(2n-q,2n-q)$ are possibly non trivial.
Let $x_{k,q}$ be the component of $r_{k*}\left(Y_k\cdot \left[E\right]_{|\widetilde{\mathscr{X}}_{k}}^{d-q}\right)$ of type $(2n-q,2n-q)$. 
Since $t_k\in\mathcal{U}$, $x_{k,q}$ remains of type $(2n-q,2n-q)$ on all small deformations of $\mathscr{X}_k$.
Hence by Lemma \ref{FujikiHuybrechts}, there exists a constant $c(x_{k,q})$ depending on $x_{k,q}$ such that for all $\gamma\in H^{2}(\mathscr{X}_k,\C)$:
$$x_{k,q}\cdot\gamma^{q}=c(x_{k,q}) B_{\mathscr{X}_{k,q}}(\gamma,\gamma)^{\frac{q}{2}}.
$$
Applying this equation for $\gamma= \omega_k$ and for $\gamma=\alpha$, we obtain:
$$x_{k,q}\cdot\left[\omega_k\right]^{q}=\frac{B_{\mathscr{X}_k}(\left[\omega_k\right],\left[\omega_k\right])^{\frac{q}{2}}}{B_{\mathscr{X}_k}(\alpha_k,\alpha_k)^{\frac{q}{2}}}x_{k,q}\cdot\alpha_k^{q}.$$
For this reason and because of (\ref{newkey}), we can rewrite (\ref{lastcorrect}) as follows:
$$\sum_{q=0}^d\binom{d}{q}\left(\sqrt{\frac{B_{\mathscr{X}_k}(\left[\omega_k\right],\left[\omega_k\right])}{B_{\mathscr{X}_k}(\alpha_k,\alpha_k)}}T\right)^q\alpha_k^q\cdot r_{k*}\left(Y_k\cdot \left[E\right]_{|\widetilde{\mathscr{X}}_{k}}^{d-q}\right)>0.$$
Let $R\geq T_0\sqrt{\frac{B_{\mathscr{X}_k}(\left[\omega_k\right],\left[\omega_k\right])}{B_{\mathscr{X}_k}(\alpha_k,\alpha_k)}}$, 
then:
$$\sum_{q=0}^d\binom{d}{q}R^q\alpha_k^q\cdot r_{k*}\left(Y_k\cdot \left[E\right]_{|\widetilde{\mathscr{X}}_{k}}^{d-q}\right)>0.$$
That is: 
$$Y_k\cdot\left(Rr_k^*(\alpha_k)+\left[E\right]_{|\widetilde{\mathscr{X}}_{k}}\right)^d>0.$$
Using the criterion of Demailly and Paun \cite[Theorem 0.1]{DP}, this proves that $Rr_k^*(\alpha_k)+\left[E\right]_{|\widetilde{\mathscr{X}}_{k}}$ is a Kähler class on $\widetilde{\mathscr{X}}_k$.
Moreover, the map $t\rightarrow B_{\mathscr{X}_t}(\left[\omega_t\right],\left[\omega_t\right])$ is continuous and so is bounded on $\overline{\left\{t_k\left|k\in\mathbb{N}\right.\right\}}$. Since we assumed $(B_{\mathscr{X}_k}(\alpha_k,\alpha_k))_{k\in\mathbb{N}}$ bounded from below,
we can find $R_0$ such that:
$$
R_0\geq T_0\sqrt{\frac{B_{\mathscr{X}_k}(\left[\omega_k\right],\left[\omega_k\right])}{B_{\mathscr{X}_k}(\alpha_k,\alpha_k)}},$$
for all $k\in\mathbb{N}$. This finishes the proof.

\end{proof}
\begin{lemme}\label{final}
Let $s\in U$ and $\alpha\in\mathcal{C}_{\mathscr{X}_s}$, then there exist an integral linear combination of exceptional divisors $E$ and $R_0\in\R_+^*$ such that 
$\widetilde{\alpha}_{R}:=Rr_s^*(\alpha)+\left[E\right]_{|\widetilde{\mathscr{X}}_{s}}$ is the class of a Kähler current for all $R\geq R_0$.
\end{lemme}
\begin{proof}
We adapt the proof of \cite[Proposition 1]{Huybrechts2}. 
By definition, $\mathcal{U}$ is dense in $U$. Hence there exists $(s_i)_{i\in\mathbb{N}}$ a sequence in $\mathcal{U}$ which converges to $s$.
Moreover, $\alpha$ can be approximated by a sequence $(\alpha_{i})\in H^2(\mathscr{X}_s,\Z)$ such that $\alpha_i\in\mathcal{C}_{\mathscr{X}_{s_i}}$.
By Lemma \ref{kählergene}, there exists $R_0\in\R_+^*$ and an integral combination of exceptional divisors $E$ of $r$ such that for all $i\in\mathbb{N}$ and all $R\geq R_0$,
$\widetilde{\alpha}_{i,R}:=Rr_{s_i}^*(\alpha_i)+\left[E\right]_{|\widetilde{\mathscr{X}}_{s_i}}$ 
is a Kähler class and therefore corresponds to a closed positive $(1,1)$ current on $\widetilde{\mathscr{X}_{s_i}}$. 
Remark that all the currents $\widetilde{\alpha}_{i,R}$ can be seen as currents (not necessarily (1,1)) on $\widetilde{X}$, since by Ehresmann's theorem, there exists a diffeomorphism between $\widetilde{X}$ and $\widetilde{\mathscr{X}_{s_i}}$.

From Lemma \ref{Kähler2} ( and the proof of Lemma \ref{kählergene}), we can consider $\widetilde{\omega}_t:=T_0\left[r^*_t(\omega_t)\right]+\left[E\right]_{|\widetilde{\mathscr{X}}_{t}}$ Kähler classes on $\widetilde{\mathscr{X}_t}$ where $(\omega_t)$ depends continuously of $t\in U$.
Then $\int_{\widetilde{\mathscr{X}_{s_i}}}\widetilde{\omega}_{s_i}^{2n-1}\cdot\widetilde{\alpha}_{i,R}$
converges to $\int_{\widetilde{\mathscr{X}_s}}\widetilde{\omega}_{s}^{2n-1}\cdot\widetilde{\alpha}_R$.
Hence by Remark \ref{remarkaddedbyreferee}, $(\widetilde{\alpha}_{i,R})$ is weakly bounded, so by Banach-Alaoglu's theorem (Remark \ref{weakcompactness})
we can find a subsequence $(\widetilde{\alpha}_{\mu(i),R})$ 
which converges to a closed positive $(1,1)$ current of class $\widetilde{\alpha}_R$.  

Now we consider 
\begin{equation}
\widetilde{\alpha}_R-\epsilon\widetilde{\omega}_s=r^*_s(R\alpha-\epsilon T_0\left[\omega_s\right])+(1-\epsilon)\left[E\right]_{|\widetilde{\mathscr{X}}_{s}},
\label{Kcurrent}
\end{equation}
for a small $\epsilon\in\mathbb{R}_+^*$. We can take $R\geq \max(R_0,T_0)$ and rewrite (\ref{Kcurrent}):
$$\frac{1}{1-\epsilon}\left(\widetilde{\alpha}_R-\epsilon\widetilde{\omega}_s\right)=\frac{R}{(1-\epsilon)}r^*_s(\alpha-\epsilon\left[\omega_s\right])+\left[E\right]_{|\widetilde{\mathscr{X}}_{s}},$$
with $0<\epsilon<1$ small enough such that $\alpha-\epsilon\left[\omega_s\right]\in\mathcal{C}_{\mathscr{X}_{s}}$.
Then, we can apply the same technique as before to find a real number $R_0'\geq \max(R_0,T_0)$ such that $\widetilde{\alpha}_R-\epsilon\widetilde{\omega}_s$ is 
the class of a closed positive $(1,1)$ current for all $R\geq R_0'$.
This proves that $\widetilde{\alpha}_R$ is the class of a Kähler current for all $R\geq R_0'$. 
\end{proof}
\begin{proof}[Proof of Theorem \ref{PC}]
If $X$ is projective, we have an ample line bundle $\mathcal{L}$ on $X$. Hence from Remark \ref{kählerpositive}, we have $B_X(c_1(\mathcal{L}),c_1(\mathcal{L}))>0$. 
Conversely, assume that we have a line bundle $\mathcal{L}$ with $c_1(\mathcal{L})\in \mathcal{C}_X$. Then by Lemma \ref{final}, we can find an integer $N\in\mathbb{N}$ and an integral combination of exceptional divisor $E$ such that
$Nr_o^*(c_1(\mathcal{L}))+\left[E\right]_{|\widetilde{X}}$ is a Kähler current on $\widetilde{X}$. 
Hence by Theorem \ref{JSth}, $\widetilde{X}$ is Moishezon. So $X$ is Moishezon. Since all the singularities of $X$ are rational (cf. Proposition \ref{MildSingu}), we can apply the Namikawa criterion \cite[Theorem 6]{Namikawa} to prove that $X$ is projective. 
\end{proof}
\section{Global Torelli theorem}\label{Gloglo}
\subsection{Hyperkähler orbifolds and Twistor space}\label{twisty}
In this section, we use Notation \ref{XUV*}.
An object $\Theta$ such as a \emph{metric}, a \emph{complex structure}, a \emph{connection} or a \emph{curvature} is defined on an orbifold $X$ as usual object on the smooth part $X^*$ satisfying the following conditions. For all local uniformizing systems $(V,G,\pi)$ of an open set $U\subset X$, there exists an object $\widetilde{\Theta}$ on $V$ invariant by the action of $G$ (or commuting with the action of G in the case of $\widetilde{\Theta}$ being a complex structure) such that $\Theta_{|U^*}$ is the image of $\widetilde{\Theta}_{|V^*}$. 
Campana has shown the following theorem.
\begin{thm}[\cite{Campana}, Theorem 4.1]\label{Ricci}
Let $X$ be a compact kähler orbifold with $c_1(X)=0$, and let $\omega$ be a Kähler class on $X$. Then $\omega$ is represented by a unique Kähler Ricci-flat metric on $X$. 
\end{thm}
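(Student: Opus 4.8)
The statement is the orbifold version of the Calabi--Yau theorem, so the plan is to adapt Yau's solution of the complex Monge--Amp\`ere equation to the orbifold setting. First I would fix a reference K\"ahler form $\omega$ representing the given class. Since $c_1(X)=0$, the orbifold $\partial\overline{\partial}$-lemma (which follows from the Hodge theory of Theorem \ref{Hodge} carried out on the local uniformizing covers) provides a smooth orbifold function $f$ with $\mathrm{Ric}(\omega)=i\partial\overline{\partial}f$. Producing a Ricci-flat metric $\widetilde{\omega}=\omega+i\partial\overline{\partial}\varphi$ in the same class is then equivalent to solving
$$(\omega+i\partial\overline{\partial}\varphi)^n=e^{f+c}\,\omega^n$$
for an orbifold function $\varphi$, where $c$ is the unique constant making the total volumes agree; this normalization is forced because $[\widetilde{\omega}]=[\omega]$ gives $\int_X\widetilde{\omega}^n=\int_X\omega^n$.

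The second step is to solve this equation by the continuity method exactly as in the smooth case. The observation that makes the whole argument go through is that, by definition, an orbifold K\"ahler form is a $G$-invariant smooth K\"ahler form on each local uniformizing cover $(V,G,\pi)$, and that all of Yau's a priori estimates --- the $C^0$ estimate via the maximum principle or Moser iteration, the second-order Laplacian estimate, and the higher-order Schauder estimates --- are local computations that can be carried out verbatim on the genuinely smooth manifolds $V$. Because $X$ is compact it is covered by finitely many such charts, so the estimates can be made uniform, and the openness step is the usual implicit function theorem on the Banach spaces of $G$-invariant H\"older functions.

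The key point tying the local analysis to the orbifold is $G$-equivariance combined with uniqueness: at each stage of the continuity method the equation is $G$-invariant on each cover, so by the maximum-principle uniqueness of solutions the lifted solution $\widetilde{\varphi}$ on $V$ is automatically $G$-invariant and hence descends to a genuine orbifold function $\varphi$ on $X$. Global integral identities (integration by parts and the determination of $c$) are legitimate on $X$ because $\Sing X$ has real codimension at least $2$ and the invariant forms extend smoothly across it upstairs, so that integrals over $U$ equal $|G|^{-1}$ times integrals over $V$. Uniqueness of the Ricci-flat representative then follows from Calabi's maximum-principle argument applied to the difference of two solutions.

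The main obstacle I would expect is technical rather than conceptual: ensuring that the a priori estimates are uniform over the finite atlas and, more delicately, that no constant in Yau's estimates degenerates near the singular locus. This is handled by performing every estimate on the smooth covers $V$, where the metrics are honest smooth $G$-invariant K\"ahler metrics with controlled geometry, and only afterwards descending the $G$-invariant solution to $X$; the finiteness of the groups $G$ and the compactness of $X$ keep all constants under control.
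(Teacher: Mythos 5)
The paper offers no proof of this statement: it is imported verbatim as Theorem 4.1 of Campana's paper \cite{Campana}, so there is no internal argument to compare against. Your sketch --- reduction to the complex Monge--Amp\`ere equation via the orbifold $\partial\overline{\partial}$-lemma, the continuity method with Yau's a priori estimates carried out on the smooth local uniformizing covers $V$, and descent of the $G$-invariant solution --- is the standard and correct route, and is essentially the argument of the cited source.
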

Using this result, Campana in \cite[Definition 6.5]{Campana} defined a hyperkähler orbifold as follows.
\begin{defi}
A compact orbifold $X$ of dimension $2n$ is said to be \emph{hyperkähler} if $X^*$ is simply connected and if $X$ admits a Ricci flat Kähler metric $g$ such that its restriction to the smooth part $g_{|X^*}$ has holonomy $\Sp(n)$.
\end{defi}
\begin{prop}[\cite{Campana}, Proposition 6.6]\label{hypersymplec}
Let $X$ be an orbifold with $\codim \Sing X\geq 4$.
The following two statements are equivalent:
\begin{itemize}
\item[(i)]
X is a hyperkähler orbifold;
\item[(ii)]
X is an irreducible symplectic orbifold.
\end{itemize}
\end{prop}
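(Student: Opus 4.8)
The plan is to prove the two implications simultaneously through holonomy theory, using Theorem \ref{Ricci} to produce a Ricci-flat K\"ahler metric and the orbifold \emph{Bochner principle} (which on a compact Ricci-flat K\"ahler orbifold identifies parallel tensors on $X^*$ with global holomorphic tensors, equivalently with harmonic forms) to translate between the analytic and the algebraic pictures. The linear-algebra dictionary underlying everything is that the stabiliser in $U(2n)$ of a non-degenerate complex $2$-form is exactly $\Sp(n)$, and that the only trivial summand of the $\Sp(n)$-representation $\Lambda^2(\C^{2n})^{\vee}$ is the line spanned by that form.

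For the implication (ii) $\Rightarrow$ (i), assume $X$ is irreducible symplectic with symplectic form $\sigma$ on $X^*$. Then $\sigma^n$ is a nowhere-vanishing holomorphic $2n$-form on $X^*$; by Lemma \ref{PetersLemma} it extends to a section of $\Omega_X^{2n}$, and since $\codim \Sing X\geq 4$ this trivialises $K_X$, so $c_1(X)=0$. Fixing a K\"ahler class, Theorem \ref{Ricci} yields a Ricci-flat K\"ahler metric $g$. By the Bochner principle $\sigma$ is $g$-parallel, so the restricted holonomy of $g_{|X^*}$ stabilises $\sigma$ and hence lies in $\Sp(n)$; as $X^*$ is simply connected the restricted and full holonomy agree, giving $\mathrm{Hol}(g_{|X^*})\subseteq \Sp(n)$ and, in particular, the three parallel complex structures $I,J,K$ in quaternionic relations. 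To upgrade the inclusion to an equality I would invoke the orbifold analogue of the Bogomolov--Beauville decomposition of compact Ricci-flat K\"ahler orbifolds into Calabi--Yau and hyperk\"ahler factors: simple connectedness removes any flat torus factor and any passage to a finite cover, $h^{2,0}(X)=1$ forces exactly one hyperk\"ahler factor (Calabi--Yau factors contribute $0$ to $h^{2,0}$), and the non-degeneracy of $\sigma$ excludes Calabi--Yau factors altogether, since a holomorphic $2$-form would degenerate along them. Hence $X$ is a single hyperk\"ahler orbifold and the holonomy is exactly $\Sp(n)$.

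For the implication (i) $\Rightarrow$ (ii), assume $X$ is hyperk\"ahler with Ricci-flat metric $g$ of holonomy $\Sp(n)$; then $X^*$ is simply connected by definition. The parallel complex symplectic form on the tangent bundle corresponds, via the Bochner principle, to a global holomorphic non-degenerate $2$-form $\sigma$ on $X^*$, so $X$ is symplectic. Since parallel $(2,0)$-forms are precisely the holonomy-invariant elements of $\Lambda^2(\C^{2n})^{\vee}$ and the space of $\Sp(n)$-invariants there is the single line $\C\sigma$, the Bochner principle gives $h^{2,0}(X)=\dim H^0(X,\Omega_X^2)=1$. Thus $X$ is primitively symplectic, and together with the simple connectedness of $X^*$ it is irreducible symplectic.

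The main obstacle is the holonomy-theoretic input in (ii) $\Rightarrow$ (i): one must make sense of parallel transport, the de Rham/Bogomolov splitting, and the Bochner vanishing on the open, non-complete smooth locus $X^*$, and then propagate the conclusions across $\Sing X$. This is exactly where the hypothesis $\codim \Sing X\geq 4$ and the existence statement of Theorem \ref{Ricci} become indispensable, as they guarantee that Hodge theory, harmonic representatives, and the holonomy reduction behave as in the smooth compact case.
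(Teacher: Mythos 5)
Your argument is correct and is essentially the proof of the cited source: the paper itself gives no proof of Proposition \ref{hypersymplec} but imports it directly from \cite{Campana}, whose Proposition 6.6 runs exactly along your lines --- trivialise $K_X$ by $\sigma^n$, apply the orbifold Calabi--Yau theorem (Theorem \ref{Ricci}), use the orbifold Bochner principle to make $\sigma$ parallel and reduce the holonomy into $\Sp(n)$, and invoke the orbifold Bogomolov--Beauville decomposition together with $\pi_1(X^*)=1$ and $h^{2,0}(X)=1$ to force equality of the holonomy, with the converse given by the one-dimensionality of the $\Sp(n)$-invariants in $\Lambda^2(\C^{2n})^{\vee}$. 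The auxiliary orbifold statements you invoke (Bochner principle, decomposition theorem) are precisely the ones established in \cite{Campana}, so nothing is missing.
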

As a consequence, we can generalize the twistor space for irreducible symplectic orbifolds (previously constructed for smooth hyperkähler manifold in \cite[Section 3 (F)]{Hitchin} and for K3 surfaces with quotient singularities in \cite[Theorem 2]{Kobayashi}).
A \emph{positive three-space} is a subspace $W\subset \Lambda\otimes\R$ such that $B_{|W}$ is positive definite. For any positive three-space, we define the associated \emph{twistor line} $T_W$ by:
$$T_W:=\mathcal{D}\cap \mathbb{P}(W\otimes\C).$$
\begin{thm}\label{Twistor}
Let $(X,\varphi)$ be a marked irreducible symplectic orbifold with $\varphi:H^2(X,\Z)\rightarrow \Lambda$. Let $\alpha$ be a Kähler class on $X$, and $W=\Vect_{\R}(\varphi(\alpha),$ $\varphi(\Ree \sigma_X),\varphi(\Ima \sigma_X))$. 
Then:
\begin{itemize}
\item[(i)]
There exists a metric $g$ and three complex structures $I$, $J$ and $K$ in quaternionic relation on $X$ such that:
$$\alpha= \left[g(\cdot,I\cdot)\right]\ \text{and}\ g(\cdot,J\cdot)+ig(\cdot,K\cdot)\in H^{0,2}(X).$$
\item[(ii)]
There exists a deformation of $X$: 
$$\mathscr{X}\rightarrow T(\alpha)\simeq\mathbb{P}^1,$$ such that the period map
$\mathscr{P}:T(\alpha)\rightarrow T_W$ provides an isomorphism. Moreover, for each $s=(a,b,c)\in \mathbb{P}^1$, the associated fiber $\mathscr{X}_s$ is an orbifold diffeomorphic to $X$ endowed with the complex structure $aI+bJ+cK$.
\end{itemize}
\end{thm}
\begin{proof}
Let $\alpha$ be a Kähler class of $X$. By Theorem \ref{Ricci}, this class is represented by a Ricci-flat metric $g$ on $X$. 
Let $U\subset X$ with a local uniformizing system $(V,G,\pi)$. We have a metric $\widetilde{g}$ on $V$ such that its restriction to $V^*$ induces the metric $g$ on $U^*$. Moreover $\alpha=\left[g(\cdot,I\cdot)\right]$ where $I$ is the complex structure on $X$. Since $X$ is irreducible symplectic, by Proposition \ref{hypersymplec}, we can find two other complex structures $J$ and $K$ on $X^*$ in quaternionic relation with $I$, that is: $IJ=-K$. In particular, we can consider the holomorphic 2-form $\sigma:=g(\cdot,J\cdot)+ig(\cdot,K\cdot)$ on $X^*$. Let $\sigma_{|U^*}$ be its restriction to $U^*$. By (\ref{PetersLemma}) and \cite[Lemma 2.1]{Fujiki}, $\pi^*\sigma_{|U^*}$ extends to a unique non-degenerate holomorphic 2-form $\widetilde{\sigma}_{|V}$ defined on all of $V$. Hence, $\Ree \widetilde{\sigma}_{|V}$ and $\Ima \widetilde{\sigma}_{|V}$ are non-degenerate $\mathbb{R}$-bilinear forms on $TV$. So, they provide $\mathbb{R}$-linear automorphisms $\widetilde{J}$ and $\widetilde{K}$ on $TV$ such that 
$$\Ree \widetilde{\sigma}_{|V}=\widetilde{g}(\cdot,\widetilde{J}\cdot)\ \text{and}\ \Ima \widetilde{\sigma}_{|V}=\widetilde{g}(\cdot,\widetilde{K}\cdot).$$
By Remark \ref{PrillR}, $\pi:V^*\rightarrow U^*$ is a local isomorphism, hence, by construction, $\widetilde{J}$ and $\widetilde{K}$ will behave locally as $J$ and $K$. In particular, they are almost complex structures on $V^*$ in quaternionic relation with $\widetilde{I}$ the complex structure on $V$ defining $I$ on $U$.
Hence by continuity, $\widetilde{J}$ and $\widetilde{K}$ are almost complex structures, in quaternionic relation with $\widetilde{I}$, on all of $V$. With the same argument, we can see that $\widetilde{J}$ and $\widetilde{K}$ are integral.
Let $D^{\widetilde{g}}$ be the Levi-Civita connection of $\widetilde{g}$ on $V$. By construction, $\widetilde{J}$, $\widetilde{K}$, $\Ree \widetilde{\sigma}_{|V}$ and $\Ima \widetilde{\sigma}_{|V}$ are $D^{\widetilde{g}}$-parallel on $V^*$, and therefore on all of $V$ by continuity. It follows that the complex structures $J$ and $K$ on $X^*$ are actually complex structures on $X$ (in the sense explained in the beginning of the section). 

Now, let $(U_i)$ be a basis of open sets of $X$ such that all $U_i$ admit a local uniformizing system $(V_i,G_i,\pi_i)$. As we have seen, the three complex structures on $X^*$ provide three complex structures on each $V_i$. Then, we can construct the twistor space $\mathscr{V}_i\rightarrow\mathbb{P}^1$ for each $V_i$ (see \cite[Section 3 (F)]{Hitchin}). The group $G_i$ acts on $\mathscr{V}_i$ preserving the fibers; moreover, by construction, the complex structures on the $\mathscr{V}_i$ are uniquely determined by the tree complex structures $I$, $J$, $K$ on $X^*$. Hence, the quotients $\mathscr{V}_i/G_i$ will glue together into an orbifold $\mathscr{X}$ with the desired projection $\mathscr{X}\rightarrow \mathbb{P}^1$.
\end{proof}
\subsection{The Kähler cone}
Let $X$ be an irreducible symplectic compact Kähler orbifold with $\codim\sing$ $ X\geq 4$. We denote by $\mathcal{K}_X$ the \emph{Kähler cone} of $X$.
We recall that the connected component of $\left\{\left.\alpha\in H^{1,1}(X,\R)\ \right|\ B_X(\alpha,\alpha)>0\right\}$ containing $\mathcal{K}_X$ is called the positive cone of $X$ and denoted by $\mathcal{C}_X$. As in the previous sections, we denote by $\mathscr{M}_{\Lambda}$ the moduli space of marked irreducible symplectic orbifolds of Beauville--Bogomolov lattice $\Lambda$.
\begin{prop}\label{Hprop5.1}
Let $(X,\varphi)\in\mathscr{M}_{\Lambda}$ be a marked irreducible symplectic orbifold. Assume that $\alpha\in \mathcal{C}_X$ is general, i.e. $\alpha$ is contained in the complement of countably many nowhere dense closed subsets. Then there exists a point $(X',\varphi')\in\mathscr{M}_{\Lambda}$, which cannot be separated from $(X,\varphi)$ such that $(\varphi'^{-1}\circ\varphi)(\alpha)\in H^2(X',\Z)$ is a Kähler class.
\end{prop}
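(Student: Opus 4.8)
The plan is to produce the required orbifold as the central fibre of a genuine twistor family whose period line passes through $x:=\mathscr{P}(X,\varphi)$: although $\alpha$ need not be K\"ahler on $X$ itself, it will be the K\"ahler class of a different, inseparable complex structure lying over the same period. First I set $W:=\Vect_{\R}(\varphi(\alpha),\varphi(\Ree\sigma_X),\varphi(\Ima\sigma_X))$. Since $\alpha\in\mathcal{C}_X$ is of type $(1,1)$ we have $q_X(\alpha)>0$ and $\alpha\perp\sigma_X,\overline{\sigma_X}$, so $W$ is a positive three-space and its twistor line $T_W$ contains $x$. Because $\alpha$ is general we may assume $\alpha\notin\delta^{\bot}$ for every nonzero $\delta\in\NS(X)$, which by Proposition \ref{Lefschetz11} gives $W^{\bot}\cap H^2(X,\Z)=\NS(X)\cap\alpha^{\bot}=0$.

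Next I would pick, using the local Torelli theorem (Theorem \ref{LocalTorelli}), a very general point $y\in T_W$ close enough to $x$ to lie in $p(U)$, where $p:U\rightarrow\mathcal{D}$ is the period map of the Kuranishi deformation of $X$ over $U$, and let $(X_y,\varphi_y)$ be the fibre over $y$. For $y$ very general on $T_W$, Proposition \ref{Lefschetz11} together with the vanishing above gives $\NS(X_y)=W^{\bot}\cap H^2(X,\Z)=0$, and then the K\"ahler cone of $X_y$ is its whole positive cone $\mathcal{C}_{X_y}$ (the orbifold analogue of \cite[Proposition 5.1]{Huybrechts5}, obtained here from Lemmas \ref{Dense}, \ref{FujikiHuybrechts} and \ref{final}). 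In particular the positive generator $\beta_y$ of the real line $W\cap H^{1,1}(X_y,\R)$ is a K\"ahler class, and $\Vect_{\R}(\beta_y,\Ree\sigma_y,\Ima\sigma_y)=W$. Applying Theorem \ref{Twistor} to $(X_y,\varphi_y)$ and $\beta_y$ then yields a genuine twistor deformation $\mathscr{T}\rightarrow T_W$ whose period map is an isomorphism onto $T_W$; since $x\in T_W$, it has a marked fibre $(X',\varphi')$ over $x$. The K\"ahler class of $\mathscr{T}_t$ is the positive generator of $W\cap H^{1,1}(\mathscr{T}_t,\R)$, which at $t=x$ is a positive multiple of $\alpha$; hence $(\varphi'^{-1}\circ\varphi)(\alpha)$ is a K\"ahler class on $X'$.

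The hard part will be to show that $(X',\varphi')$ is inseparable from $(X,\varphi)$. For this I would compare two families over $T_W$ near $y$: the twistor family $\mathscr{T}$ and the restriction $\mathscr{K}$ of the Kuranishi deformation of $X$ to the arc $p^{-1}(T_W\cap p(U))$. Both are deformations of the marked orbifold $(X_y,\varphi_y)$ inducing the inclusion $T_W\hookrightarrow\mathcal{D}$ as period map, so by completeness of the Kuranishi family and the local isomorphism property of the period map (Theorem \ref{LocalTorelli}) they are isomorphic as marked families in a neighbourhood of $y$. Letting $y$ range over a sequence $y^{(i)}\rightarrow x$ of very general points of $T_W$ produces marked orbifolds $Z_i:=\mathscr{T}_{y^{(i)}}\simeq\mathscr{K}_{y^{(i)}}$; by continuity of the two families in $\mathscr{M}_{\Lambda}$ the single sequence $(Z_i)$ converges both to $\mathscr{T}_x=(X',\varphi')$ and to $\mathscr{K}_x=(X,\varphi)$. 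A sequence converging to two points exhibits them as non-separated, so $(X',\varphi')$ cannot be separated from $(X,\varphi)$.

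The two genuinely non-formal inputs, and hence the main obstacles, are: first, that a very general $X_y$ has no K\"ahler-cone walls, so that $\beta_y$ is honestly K\"ahler (not merely in $\mathcal{C}_{X_y}$) and Theorem \ref{Twistor} applies; and second, that the a priori distinct families $\mathscr{T}$ and $\mathscr{K}$ are forced to agree near $y$ while their central fibres over $x$ are allowed to differ. It is exactly the description of non-separated points in Proposition \ref{separated} that guarantees the resulting $X'$ is a legitimate inseparable partner realising $\alpha$, rather than being biholomorphic to $X$ itself.
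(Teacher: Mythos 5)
The overall shape of your argument --- realising $X'$ as the fibre over $x=\mathscr{P}(X,\varphi)$ of a twistor family based at a very general nearby point $y$ of the twistor line $T_W$ --- is in the spirit of the paper, which imports the proof of \cite[Proposition 5.1]{Huybrechts5} verbatim with Theorem \ref{Twistor} as one of the listed inputs. But there is a genuine circularity at the decisive step. You need $\beta_y$ to be an honest K\"ahler class on $X_y$, and you obtain this from ``$\NS(X_y)=0$ implies $\mathcal{K}_{X_y}=\mathcal{C}_{X_y}$''. In this paper that statement is exactly the corollary that immediately follows Proposition \ref{Hprop5.1} and is \emph{deduced from} Proposition \ref{Hprop5.1} applied to $X_y$; it is not available beforehand, and citing ``the orbifold analogue of \cite[Proposition 5.1]{Huybrechts5}'' for it amounts to invoking the statement under proof. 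Nor do Lemmas \ref{Dense}, \ref{FujikiHuybrechts} and \ref{final} deliver it: they only produce K\"ahler classes and K\"ahler currents on the projective resolution $\widetilde{\mathscr{X}_s}$ after adding a positive multiple of the auxiliary class $(\pi_2\circ j)^*(H)$, and what they feed into is Moishezon-ness and then projectivity via Namikawa's criterion (Theorem \ref{PC}) --- not the equality of the K\"ahler cone with the positive cone on the orbifold itself. Passing from ``$\int_Z\alpha^{\dim Z}>0$ for every analytic subset $Z$'' to ``$\alpha$ is K\"ahler'' would require a Demailly--Paun criterion for orbifolds, which the paper does not establish (this is precisely why it works on the resolution throughout Section \ref{MainSection}). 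You must either prove $\mathcal{K}=\mathcal{C}$ for the very general fibre independently, or, as in Huybrechts' original argument, manufacture K\"ahler classes on nearby fibres from the projectivity criterion itself (rational classes of positive square becoming ample generators of the N\'eron--Severi group on suitable deformations) rather than from the proposition being proved.

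The inseparability step also needs repair. You only establish that the twistor family $\mathscr{T}$ and the Kuranishi restriction $\mathscr{K}$ agree near the base point $y$, yet you evaluate both along a sequence $y^{(i)}\to x$. Since $\mathscr{M}_{\Lambda}$ is not Hausdorff, the locus where two lifts of $T_W\hookrightarrow\mathcal{D}$ to $\mathscr{M}_{\Lambda}$ coincide is open but need not be closed, so it is not automatic that it accumulates at $x$; and the phrasing ``letting $y$ range over a sequence'' makes things worse, because if each $y^{(i)}$ carries its own twistor family the fibres over $x$ may vary with $i$ and there is no single limit $(X',\varphi')$. This is fixable --- by Proposition \ref{separated} two distinct inseparable points have period on a wall $\delta^{\perp}$, and since $W^{\perp}\cap\Lambda=0$ each wall meets $T_W$ in finitely many points, so the disagreement locus of one fixed pair of lifts is contained in a countable set and the agreement locus is dense in $T_W\cap p(U)$ --- but the argument has to be made. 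Finally, there is an unaddressed sign: the K\"ahler class of $\mathscr{T}_x$ generates the line $W\cap H^{1,1}(X',\R)=\R\,\alpha$, and you still need to argue, by following the continuous family of classes $a\omega_I+b\omega_J+c\omega_K$ over $S^2$, that it is a \emph{positive} multiple of $\alpha$ rather than a negative one.
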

\begin{proof}
Knowing the local Torelli theorem (Theorem \ref{LocalTorelli}), the projectivity criterion (Theorem \ref{PC}), the (1,1)-Lefschetz theorem (Proposition \ref{Lefschetz11}), bimeromorphisms between non-separated orbifolds (Proposition \ref{separated}, Remark \ref{nonsepgraph}) and the construction of the twistor space (Theorem \ref{Twistor}), the proof of Proposition \ref{Hprop5.1} can be copied word by word from the proof of \cite[Proposition 5.1]{Huybrechts5}.
\end{proof}
\begin{cor}\label{Kählercone}
 Assume that $\Pic X=0$, then 
$\mathcal{K}_{X}=\mathcal{C}_X.$
\end{cor}
\begin{proof}
Let $\varphi$ be a marking for $X$. 
From Proposition \ref{Hprop5.1}, if we consider 
$\alpha\in \mathcal{C}_X$ general, then there exists a point $(X',\varphi')\in\mathscr{M}_{\Lambda}$, which cannot be separated from $(X,\varphi)$ such that $(\varphi'^{-1}\circ\varphi)(\alpha)\in H^2(X',\Z)$ is a Kähler class.
However, when $\Pic X=0$, it follows from Proposition \ref{Lefschetz11} and \ref{separated}, that $(X',\varphi')=(X,\varphi)$. 
Hence $\alpha\in \mathcal{K}_X$. So $\mathcal{K}_X$ is a dense open convex subset of the convex open set $\mathcal{C}_X$. It follows that $\mathcal{C}_X=\mathcal{K}_X$.
\end{proof}
\subsection{Conclusion}
Using the work from the previous sections we can finally prove the main result of this article. 
As before, $\mathscr{M}_{\Lambda}$ is the moduli space of marked irreducible symplectic orbifolds of Beauville--Bogomolov lattice $\Lambda$ and $\overline{\mathscr{M}_{\Lambda}}$ its Hausdorff reduction constructed in Section \ref{modu}.
\begin{rmk}\label{Period domain}
Let $X$ be an irreducible symplectic orbifold. 
From Theorem \ref{LocalTorelli}, the Beauville--Bogomolov lattice $H^2(X,\Z)$ has signature $(3,b_2(X)-3)$. It follows that the properties of the period domain $\mathcal{D}$ in the smooth case stated in \cite[Section 3]{Huybrechts6} remain true in our case. 
\end{rmk}

\begin{prop}\label{surject}
Assume that $\mathscr{M}_{\Lambda}\neq\emptyset$ and let $\mathscr{M}_{\Lambda}^{°}$ be a connected component of the moduli space $\mathscr{M}_{\Lambda}$.
Then the period map:
$$\mathscr{P}:\mathscr{M}_{\Lambda}^{°}\rightarrow \mathcal{D}$$ is surjective.
\end{prop}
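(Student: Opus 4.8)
The plan is to show that the image $U:=\mathscr{P}(\mathscr{M}_{\Lambda}^{\circ})$ is a nonempty open subset of $\mathcal{D}$ which is \emph{complete along twistor lines}, and then to exhaust $\mathcal{D}$ by propagating $U$ through chains of twistor lines, using the geometry of the period domain imported from the smooth case. First I would note that $U$ is open: by the local Torelli theorem (Theorem \ref{LocalTorelli}) the period map is a local isomorphism, hence open, and $U\neq\emptyset$ by hypothesis.

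The heart of the argument is the following twistor-completeness of $U$. Let $(X,\varphi)\in\mathscr{M}_{\Lambda}^{\circ}$ with period $x=\mathscr{P}(X,\varphi)$, and write $P_x=\Vect_{\R}(\varphi\Ree\sigma_X,\varphi\Ima\sigma_X)$ for the associated positive two-space, so that $x\in T_W$ exactly for the positive three-spaces $W\supset P_x$. If $\alpha$ is a K\"ahler class on $X$ and $W=P_x\oplus\R\varphi(\alpha)$, then Theorem \ref{Twistor} produces a twistor family $\mathscr{X}\rightarrow T(\alpha)\simeq\mathbb{P}^1$ lying entirely in the connected component $\mathscr{M}_{\Lambda}^{\circ}$ (its base is connected and contains $(X,\varphi)$), whose period map identifies $T(\alpha)$ with the full twistor line $T_W$; hence $T_W\subset U$. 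Suppose moreover that $x$ is general, i.e. $x^{\bot}\cap\Lambda=0$. Then by Proposition \ref{Lefschetz11} one has $\NS(X)=0$, so Corollary \ref{K�hlercone} gives $\mathcal{K}_X=\mathcal{C}_X$; consequently every positive three-space $W\supset P_x$ is of the above form for a suitably oriented K\"ahler class $\alpha$, and therefore \emph{every} twistor line through $x$ is contained in $U$. In particular, if a twistor line $T_W$ contains even one general point of $U$, then $T_W\subset U$, since $T_W\cap U$ is open and nonempty in $T_W$ while the non-general points of $T_W$ form a countable union of proper subsets and are thus not dense.

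It remains to pass from this to surjectivity. Fix a general point $x_0\in U$ and let $p\in\mathcal{D}$ be arbitrary. Here I would invoke the twistor-line geometry of the period domain of signature $(3,b-3)$, which by Remark \ref{Period domain} behaves as in the smooth case \cite[Section 3]{Huybrechts6}: any two points of $\mathcal{D}$ are joined by a chain of twistor lines, and such a chain may be chosen so that its intermediate points are general. Propagating along such a chain from $x_0$ to $p$, each successive twistor line contains a general point already known to lie in $U$, hence is entirely contained in $U$ by the previous paragraph; the final line of the chain contains $p$, so $p\in U$. Therefore $U=\mathcal{D}$.

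The main obstacle is precisely this last exhaustion step. A single twistor line through a fixed point sweeps out far less than a neighborhood in $\mathcal{D}$ (its union over all $W\supset P_x$ has real dimension only $b-1<2b-4=\dim_{\R}\mathcal{D}$ when $b>3$), so a naive open–closed argument fails. One genuinely needs the connectivity of $\mathcal{D}$ by chains of twistor lines, together with the density of general points on each line (so that Corollary \ref{K�hlercone} keeps applying, turning positive classes into K\"ahler classes) and the fact that twistor families never leave the fixed component $\mathscr{M}_{\Lambda}^{\circ}$. These are exactly the inputs furnished by Theorem \ref{Twistor}, Corollary \ref{K�hlercone}, and the transfer of the smooth period-domain geometry through Remark \ref{Period domain}.
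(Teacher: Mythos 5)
Your argument is correct and is essentially the paper's own proof: the paper simply invokes Remark \ref{Period domain} to transfer \cite[Proposition 3.7]{Huybrechts6} (connectivity of $\mathcal{D}$ by chains of generic twistor lines) and then copies the proof of \cite[Theorem 5.5]{Huybrechts6}, which is exactly the openness-plus-twistor-propagation argument you spell out, using Theorem \ref{Twistor} and Corollary \ref{K�hlercone} at general points. The only difference is that you have written out explicitly the steps the paper leaves to the citation.
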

\begin{proof}
By Remark \ref{Period domain}, \cite[Proposition 3.7]{Huybrechts6} is also true in our case.
Hence knowing Theorem \ref{Twistor} and Corollary \ref{Kählercone}, the proof of the surjectivity of the period map can be copied word by word from the proof of \cite[Theorem 5.5]{Huybrechts6}.
\end{proof}
\begin{thm}\label{torelliglobal}
Assume that $\mathscr{M}_{\Lambda}\neq\emptyset$ and let $\mathscr{M}_{\Lambda}^{°}$ be a connected component of the moduli space $\mathscr{M}_{\Lambda}$.
Then $\mathscr{P}: \overline{\mathscr{M}_{\Lambda}}^{°}\rightarrow \mathcal{D}$ is an isomorphism.
\end{thm}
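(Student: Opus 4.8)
The plan is to show that $\mathscr{P}\colon \overline{\mathscr{M}_{\Lambda}}^{\circ}\to\mathcal{D}$ is a covering map and then to exploit the simple connectedness of $\mathcal{D}$. By Corollary~\ref{Hausdorff}, $\mathscr{P}$ is a locally biholomorphic map from the connected Hausdorff complex manifold $\overline{\mathscr{M}_{\Lambda}}^{\circ}$ to $\mathcal{D}$, and by Remark~\ref{Period domain} the period domain $\mathcal{D}$ has the same topological properties as in the smooth case; in particular it is connected, locally path-connected and simply connected. By Proposition~\ref{surject} the map is surjective. Since a covering of a connected, locally path-connected, simply connected base by a connected space is a homeomorphism, it will suffice to prove that $\mathscr{P}$ is a covering map: being moreover a local biholomorphism, it will then automatically be an isomorphism.

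First I would check that passing to the Hausdorff reduction preserves the \'etale nature of the period map. On $\mathscr{M}_{\Lambda}$ the period map is a local isomorphism by the local Torelli theorem (Theorem~\ref{LocalTorelli}), and by Corollary~\ref{Hausdorff} the reduction collapses exactly the inseparable pairs of points, which by Proposition~\ref{separated} already have equal period. Consequently the induced map on $\overline{\mathscr{M}_{\Lambda}}^{\circ}$ stays a local biholomorphism, as recorded in Corollary~\ref{Hausdorff}.

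The heart of the argument is the covering property, and the decisive input is the completeness of twistor lines supplied by Theorem~\ref{Twistor}. Given a marked orbifold lying over a point of a twistor line $T_{W}\subset\mathcal{D}$ and a K\"ahler class $\alpha$ whose associated positive three-space is $W$, the twistor family $\mathscr{X}\to T(\alpha)\simeq\mathbb{P}^{1}$ of Theorem~\ref{Twistor}(ii) lifts the \emph{entire} compact curve $T_{W}$ into $\overline{\mathscr{M}_{\Lambda}}^{\circ}$, mapping isomorphically onto $T_{W}$ under $\mathscr{P}$. Following the strategy of Verbitsky as presented by Huybrechts \cite{Huybrechts6}, I would convert this complete-lifting property into the covering property by a topological lemma: a local homeomorphism onto $\mathcal{D}$ that admits through every point of its source an isomorphic lift of every twistor line through the corresponding image point is a covering, the compactness of the lifted lines being precisely what prevents a fiber from degenerating or escaping. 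That the lemma applies uses that the twistor lines through a fixed period point form a positive-dimensional family and that these lines sweep out all of $\mathcal{D}$.

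The main obstacle is this covering step. Two delicate points must be handled: first, the lift of a twistor line through a given point of $\overline{\mathscr{M}_{\Lambda}}^{\circ}$ must be single-valued and vary continuously with the point, and it is exactly here that the Hausdorff reduction is indispensable, since on $\mathscr{M}_{\Lambda}$ itself the inseparable points would create branching of these lifts; second, one must ensure that the twistor lines are abundant enough to propagate the local covering structure across all of $\mathcal{D}$, which follows from the fact that any two points of $\mathcal{D}$ are joined by a chain of twistor lines. Once $\mathscr{P}$ is known to be a covering, the simple connectedness of $\mathcal{D}$ from Remark~\ref{Period domain} forces the connected covering $\overline{\mathscr{M}_{\Lambda}}^{\circ}\to\mathcal{D}$ to be of degree one, hence a homeomorphism, and the local biholomorphicity then yields the claimed isomorphism.
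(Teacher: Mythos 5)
Your proposal follows essentially the same route as the paper: both reduce the theorem to showing that $\mathscr{P}$ on the Hausdorff reduction is a covering map via the lifting of twistor lines (Theorem~\ref{Twistor}, together with the covering-space criterion of \cite[Proposition 5.6]{Huybrechts6}), and then conclude by the simple connectedness of $\mathcal{D}$. The only point you gloss over is that not every twistor line through a given point lifts — only those coming from K\"ahler classes — which is why the paper also invokes Corollary~\ref{K�hlercone} to guarantee that generic twistor lines are of this form; otherwise the argument is the one the paper gives.
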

\begin{proof}
By Remark \ref{Period domain}, \cite[Proposition 3.10 and Lemma 3.11]{Huybrechts6} remains true in our case. Hence,
with exactly the same proof as found in \cite[Section 5.4]{Huybrechts6}, we deduced from general consideration about covering spaces (\cite[Proposition 5.6]{Huybrechts6}), Corollaries \ref{Hausdorff}, \ref{Kählercone} and Theorem \ref{Twistor} 
that $\mathscr{P}: \overline{\mathscr{M}_{\Lambda}}^{°}\rightarrow \mathcal{D}$ is a covering space. Since $\mathcal{D}$ is simply connected (\cite[Proposition 3.1]{Huybrechts6}), $\mathscr{P}$ is actually an isomorphism.
\end{proof}
\begin{rmk}
From Proposition \ref{separated}, we also obtain that $\mathscr{P}: \mathscr{M}_{\Lambda}^{°}\rightarrow \mathcal{D}$ is generically injective. 
\end{rmk}
\begin{cor}
Let $X$ and $X'$ be two irreducible symplectic orbifolds such that there exists a parallel transform operator $\lambda:H^2(X,\Z)\simeq H^2(X',\Z)$ which is a Hodge isometry. Then $X$ and $X'$ are bimeromorphic. 
\end{cor}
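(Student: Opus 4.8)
The plan is to deduce this statement from the global Torelli theorem (Theorem \ref{torelliglobal}) combined with the description of non-separated points (Proposition \ref{separated}). First I would use the parallel transport operator to place both orbifolds in a common connected component of the moduli space with compatible markings. Concretely, I choose any marking $\varphi:H^2(X,\Z)\rightarrow\Lambda$ and set $\varphi':=\varphi\circ\lambda^{-1}:H^2(X',\Z)\rightarrow\Lambda$; since $\lambda$ is an isometry, $\varphi'$ is a marking of $X'$. Because $\lambda$ is a parallel transport operator, it is induced by parallel transport in the local system $\DR^2 f_*\Z$ along a path in the base of a smooth family deforming $X$ into $X'$, so that transporting the marking $\varphi$ along this path exhibits $(X,\varphi)$ and $(X',\varphi')$ as points of one and the same connected component $\mathscr{M}_{\Lambda}^{\circ}$.

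Next I would check that the two marked orbifolds have the same period. As $\lambda$ is a Hodge isometry it carries $H^{2,0}(X)$ onto $H^{2,0}(X')$, whence
$$\mathscr{P}(X',\varphi')=\varphi'(H^{2,0}(X'))=\varphi(\lambda^{-1}(H^{2,0}(X')))=\varphi(H^{2,0}(X))=\mathscr{P}(X,\varphi).$$
By the global Torelli theorem the period map $\mathscr{P}:\overline{\mathscr{M}_{\Lambda}}^{\circ}\rightarrow\mathcal{D}$ is an isomorphism, in particular injective, so $(X,\varphi)$ and $(X',\varphi')$ map to the same point of the Hausdorff reduction $\overline{\mathscr{M}_{\Lambda}}^{\circ}$. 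By Corollary \ref{Hausdorff}, two points coincide in $\overline{\mathscr{M}_{\Lambda}}$ exactly when they are inseparable points of $\mathscr{M}_{\Lambda}$; hence $(X,\varphi)$ and $(X',\varphi')$ are non-separated.

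Finally I would conclude with Proposition \ref{separated}: if the two points are distinct they are non-separated distinct points, hence $X$ and $X'$ are bimeromorphic, while if the two points coincide the equivalence relation defining $\mathscr{M}_{\Lambda}$ furnishes an isomorphism $X\simeq X'$. In either case $X$ and $X'$ are bimeromorphic.

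The main obstacle I anticipate is the first step, namely justifying rigorously that a parallel transport operator yields marked orbifolds lying in a single connected component of $\mathscr{M}_{\Lambda}$. This rests on the fact (Corollary \ref{constantsheaf}) that for a deformation of orbifolds of fixed local analytic type the local system $\DR^2 f_*\Z$ is locally constant, so that parallel transport of cohomology classes, and hence of the marking, is well defined along paths in the base. Once this is in place the remaining steps are a direct application of the global Torelli isomorphism and of the non-separatedness characterization, and require no further work.
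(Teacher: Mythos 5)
Your proof is correct and follows essentially the same route as the paper: define $\varphi':=\varphi\circ\lambda^{-1}$, use the parallel transport hypothesis to place both marked orbifolds in one connected component, note the periods agree since $\lambda$ is a Hodge isometry, invoke the global Torelli theorem together with Corollary \ref{Hausdorff} to conclude the points are non-separated, and finish with Proposition \ref{separated}. Your extra care about the degenerate case where the two marked points actually coincide (giving an isomorphism, hence a fortiori a bimeromorphism) is a harmless refinement of the paper's argument.
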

\begin{proof}
Let $\varphi:H^2(X,\Z)\rightarrow \Lambda$ be a marking of $X$ and $\varphi':=\varphi\circ\lambda^{-1}$. Since $\lambda$ is a parallel transport operator, $(X,\varphi)$ and $(X',\varphi')$ are in the same connected component of $\mathscr{M}_{\Lambda}$. Since $\lambda$ is a Hodge isometry, we have $\mathscr{P}(X,\varphi)=\mathscr{P}(X',\varphi')$. Hence by Corollary \ref{torelliglobal}, they are non-separated points in $\mathscr{M}_{\Lambda}$. So by Proposition \ref{separated}, they are bimeromorphic. 
\end{proof}
\begin{rmk}
For the converse assertion, one would need more knowledge about bimeromorphisms between orbifolds. This will be the object of future works. 
\end{rmk}
\bibliographystyle{amssort}

\noindent
Gr\'egoire \textsc{Menet}

\noindent
Institut Fourier

\noindent 
100 rue des Math\'ematiques, Gi\`eres (France),

\noindent
{\tt gregoire.menet@univ-grenoble-alpes.fr}

\end{document}